\newif\ifpdf
\numberwithin{equation}{section}       
 \theoremstyle{plain}    
 \newtheorem{thm}{Theorem}[section]
 \numberwithin{equation}{section} 
 \numberwithin{figure}{section} 
 \theoremstyle{plain}
 \theoremstyle{plain}    
 \newtheorem{cor}[thm]{Corollary} 
 \theoremstyle{plain}    
 \newtheorem{prop}[thm]{Proposition} 
 \theoremstyle{plain}    
 \newtheorem{lem}[thm]{Lemma} 
 \theoremstyle{remark}
 \newtheorem{rem}[thm]{Remark}
 \theoremstyle{definition}
\newtheorem*{thmA}{Theorem A} 
\newtheorem*{thmB}{Theorem B} 
\newtheorem*{thmC}{Theorem C} 
\newtheorem*{thmD}{Theorem D}
 \newtheorem*{corA}{Corollary A}
\theoremstyle{plain}
\newtheorem{defi}[thm]{Definition}
\newtheorem*{ackn}{Acknowledgements}
 \newcommand{\A}{{\mathbb{A}}}
\newcommand{\C}{{\mathbb{C}}}
\newcommand{\N}{{\mathbb{N}}}
\newcommand{\PP}{{\mathbb{P}}}
\newcommand{\Q}{{\mathbb{Q}}}
\newcommand{\R}{{\mathbb{R}}}
\newcommand{\T}{{\mathbb{T}}}
\newcommand{\Z}{{\mathbb{Z}}}
\newcommand{\cB}{{\mathcal{B}}}
\newcommand{\cD}{{\mathcal{D}}}
\newcommand{\cE}{{\mathcal{E}}}
\newcommand{\cF}{{\mathcal{F}}}
\newcommand{\cL}{{\mathcal{L}}}
\newcommand{\cO}{{\mathcal{O}}}
\newcommand{\cK}{{\mathcal{K}}}
\renewcommand{\a}{\alpha}
\renewcommand{\b}{\beta}
\newcommand{\de}{\delta}
\newcommand{\e}{\varepsilon}
\newcommand{\MA}{\mathrm{MA}\,}
\newcommand{\id}{\operatorname{id}}
\newcommand{\vol}{\operatorname{vol}}
\newcommand{\tr}{\operatorname{tr}}
\newcommand{\res}{\operatorname{Res}}
\newcommand{\supp}{\operatorname{supp}}
\newcommand{\eq}{{\mu_\mathrm{eq}}}
\newcommand{\eneq}{{\cE_\mathrm{eq}}}
\newcommand{\ena}{{\cE^\A_\mathrm{eq}}}
\newcommand{\ela}{{\cL^\A_k}}
\begin{document}

\setcounter{tocdepth}{1}

\title{Growth of balls of holomorphic sections and energy at equilibrium}

\date{25 march 2010}

\author{Robert Berman, S{\'e}bastien Boucksom}

\address{Chalmers Techniska H{\"o}gskola \\
 G{\"o}teborg\\
 Sweden}

\email{robertb@chalmers.se}

\address{CNRS-Universit{\'e} Paris 7\\
 Institut de Math{\'e}matiques\\
 F-75251 Paris Cedex 05\\
 France}

\email{boucksom@math.jussieu.fr}

\begin{abstract}
Let $L$ be a big line bundle on a compact complex manifold $X$. Given a non-pluripolar compact subset $K$ of $X$ and a continuous Hermitian metric $e^{-\phi}$ on $L$, we define the energy at equilibrium of $(K,\phi)$ as the Monge-Amp\`ere energy of the extremal psh weight associated to $(K,\phi)$. We prove the differentiability of the energy at equilibrium with respect to $\phi$, and we show that this energy describes the asymptotic behaviour as $k\to\infty$ of the volume of the sup-norm unit ball induced by $(K,k\phi)$ on the space of global holomorphic sections $H^0(X,kL)$. As a consequence of these results, we recover and extend Rumely's Robin-type formula for the transfinite diameter. We also obtain an asymptotic description of the analytic torsion, and extend Yuan's equidistribution theorem for algebraic points of small height to the case of a big line bundle. 
\end{abstract} 

\maketitle

\tableofcontents

\newpage

\section*{Introduction}
\subsection{The setting}
Let $L$ be a holomorphic line bundle over a compact complex manifold $X$ of dimension $n$. By a \emph{weighted subset} $(K,\phi)$ (resp.~\emph{a weighted measure} $(\mu,\phi)$), we will mean the data of a non-pluripolar compact subset $K$ of $X$ (resp.~a probability measure with non-pluripolar support) together with the weight $\phi$ of a continuous Hermitian metric $e^{-\phi}$ on $L$ (cf. Section~\ref{sec:weights} for more details on the terminology). Using additive notation for tensor powers, we can then endow the space of global sections $s\in H^0(X,kL)$ of $kL$ with the $L^\infty$-norm
$$\Vert s\Vert_{L^\infty(K,k\phi)}:=\sup_K|s|_{k\phi}$$
and the $L^2$-norm 
$$\Vert s\Vert^2_{L^2(\mu,k\phi)}:=\int_X|s|^2_{k\phi}d\mu,$$ 
both of which are indeed norms under the standing assumption that $\supp\mu$ (resp. $K$) are non-pluripolar. 
Consider the special case where $K$ and $\supp\mu$ are compact subsets of 
$$\C^n\subset\PP^n=:X$$ endowed with the ample line bundle $\cO(1)=:L$.  Restricting to $\C^n$ identifies $H^0(\PP^n,\cO(k))$ with the space of polynomials of total degree at most $k$. The linear form $X_0\in H^0(\PP^n,\cO(1))$ cutting out the hyperplane at infinity induces a singular Hermitian metric on $\cO(1)$ with weight $\log|X_0|$, whose restriction to $\C^n$ is smooth. A continuous weight $\phi$ on $\cO(1)$ defined near $K$ is thus naturally identified with a continuous \emph{function} $(\phi-\log|X_0|)|$ with compact support in $\C^n$. On the other hand a plurisubharmonic (psh for short) function on $\C^n$ with at most logarithmic growth at infinity gets identified with the weight $\phi$ of a non-negatively curved (singular) Hermitian metric on $L$, which will thus be referred to as a \emph{psh weight} (note that the corresponding log-homogeneous function on $L^*$ is psh). 

In the general setting described above, the asymptotic study as $k\to\infty$ of $H^0(X,kL)$ endowed with the above $L^2$ or $L^\infty$-norms thus appears as a natural generalisation of the classical theory of orthogonal polynomials (cf.~for instance~\cite{ST} and in particular Bloom's appendix therein). 

These two norms on $H^0(kL)$ are equivalently described by their unit balls, which will respectively be denoted by
$$\cB^2(\mu,k\phi),\,\cB^\infty(K,k\phi)\subset H^0(kL).$$
The main goal of the present paper is to study the asymptotic behaviour of the volume of these balls as $k\to\infty$. 
As we shall see, it is related to a well-known energy functional that we now describe.

\subsection{The Monge-Amp\`ere energy functional}
We denote the curvature $(1,1)$-form of a smooth weight $\phi$ on $L$ as $dd^c\phi$, and define the \emph{Monge-Amp\`ere operator} on such weights as 
$$\MA(\phi):=(dd^c\phi)^n.$$
We have normalised as usual the operator $d^c$ so that $dd^c=\frac{i}{\pi}\partial\overline{\partial}$. 

Integrating against this measure-valued operator induces a $1$-form on the (affine) space of smooth weights on $L$, and it is a remarkable fact that this $1$-form is \emph{closed}, hence exact. The primitive of this Monge-Amp\`ere $1$-form will be denoted by $\phi\mapsto\cE(\phi)$ and called the \emph{Monge-Amp\`ere} energy functional. It is therefore characterised by the property
\begin{equation}\label{equ:direc}\frac{d}{dt}_{t=0}\cE((1-t)\phi_1+t\phi_2)=\int_X(\phi_2-\phi_1)\MA(\phi_1).\end{equation}
As is the case for any primitive, $\cE$ is only defined up to a constant. We will always assume that it is normalised by $\cE(\phi_0)=0$ for some auxiliary weight $\phi_0$ fixed once and for all. On the other hand, \emph{differences} $\cE(\phi)-\cE(\psi)$ are intrisically defined. An explicit formula for $\cE$ can be obtained by integration along line segments, which yields

\begin{equation}\label{equ:energy}\cE(\phi)-\cE(\psi)=\frac{1}{n+1}\sum_{j=0}^n\int_X(\phi-\psi)(dd^c\phi)^j\wedge(dd^c\psi)^{n-j}.\end{equation}
Note that the right-hand side involves the Bott-Chern secondary class  attached to the Chern character. The functional $\cE$ seems to have been first explicitly mentioned in an article in~\cite{Mab}, where it is denoted by $L$. It is closely related to the $J$-functional of~\cite{Aub}, and it also corresponds to the functional $I$ in~\cite{Che,Don1} and to minus $F_{\omega_{0}}^{0}$ on p.59 of Tian's book~\cite{Tia2}, where it is proved that $\phi\mapsto \cE(\phi)$ is non-decreasing and concave on smooth psh weights. 

By the fundamental work of Bedford-Taylor, mixed Monge-Amp\`ere operators can be extended to locally bounded psh weights $\phi$. Since the difference of two such weights is a bounded function on $X$, we can use formula (\ref{equ:energy}) to \emph{define} the Monge-Amp\`ere energy $\cE(\phi)$ for a locally bounded weight $\phi$. The proofs of all the above properties, which only rely on integration by parts, are then easily extended to this setting. 

The locally bounded case is good enough for our purposes when $L$ is \emph{ample}. The more general situation of a \emph{big} line bundle is treated in Section~\ref{sec:energy} relying on non-pluripolar products of currents and the appropriate integration-by-parts formula proved in~\cite{BEGZ}. The end result is that $\cE(\phi)$ defined by (\ref{equ:energy}) for two psh weights $\phi,\psi$ with minimal singularities in the sense of Demailly (cf. Section~\ref{sec:big}) still satisfies (\ref{equ:direc}) above. It is non-decreasing and concave, and is continuous along monotonic sequences of such weights. 

\subsection{Asymptotics of ball volumes and energy at equilibrium}
Assume now that $L$ is a big line bundle (which implies that $X$ is Moishezon, i.e.~bimeromorphic to  a projective manifold). Given a weighted subset $(K,\phi)$, its \emph{equilibrium weight} is defined as the following extremal weight:
\begin{equation}\label{equ:equi_weight} 
P_K\phi:=\mathrm{sup}^{*}\{\psi\,\mathrm{psh}\,\mathrm{weight},\,\psi\le\phi\,\,\mathrm{on}\,K\},\end{equation}
where the star denotes upper semi-continuous regularisation.
The equilibrium weight is itself a psh weight with minimal singularities (recall that $K$ is assumed to be non-pluripolar throughout). The \emph{equilibrium measure} of $(K,\phi)$ is the probability measure defined by
\begin{equation}\label{equ:equi_meas}\eq(K,\phi):=\vol(L)^{-1}\MA(P_K\phi).\end{equation}
The normalising factor is the \emph{volume} of $L$, i.e. 
$$\vol(L)=\lim_{k\to\infty}\frac{n!}{k^n}N_k$$
where $N_k:=h^0(kL)$ denotes the complex dimension of $H^0(kL)$ (cf.~Theorem~\ref{thm:mass}). Note that $\vol(L)>0$, precisely because $L$ is big. 

The measure $\eq(K,\phi)$ is concentrated on $K$, and $P_K\phi=\phi$ holds a.e.~on $K$ with respect to this measure (cf.~Proposition~\ref{prop:support}). We define the \emph{energy at equilibrium} of $(K,\phi)$ as
\begin{equation}\label{equ:eneq}\eneq(K,\phi):=\vol(L)^{-1}\cE(P_K\phi).
\end{equation}
The energy at equlibrium is well-defined only up to an overall additive constant, but \emph{differences} 
$$\eneq(K_1,\phi_1)-\eneq(K_2,\phi_2)$$
are intrinsically defined. Our choice of normalisation yields the scaling property
\begin{equation}\label{equ:scaling}\eneq(K,\phi+c)=\eneq(K,\phi)+c
\end{equation}
for each constant $c\in\R$. 

On the other hand we introduce the $\cL$-functionals
\begin{equation}\label{equ:L-func}\cL_k(K,\phi):=\frac{1}{2kN_k}\log\vol_k\cB^\infty(K,k\phi),
\end{equation}
and
\begin{equation}\label{equ:L-funcl2}
\cL_k(\mu,\phi):=\frac{1}{2kN_k}\log\vol_k\cB^2(\mu,k\phi),
\end{equation}
where $\mu$ is a probability measure on $X$ with non-pluripolar support. These functionals are meant to be reminiscent of Donaldson's $\cL$-functionals~\cite{Don1}. The volume $\vol_k$ denotes Lebesgue measure on the vector space $H^0(kL)$, and is thus only defined up to a multiplicative constant. As a consequence, the functionals $\cL_k$ are defined up to overall additive constants, but here again \emph{differences} $\cL_k(K_1,\phi_1)-\cL_k(K_2,\phi_2)$ (resp. $\cL_k(\mu_1,\phi_1)-\cL_k(\mu_2,\phi_2)$) are well-defined since they do not depend on the choice of $\vol_k$. Since $H^0(kL)$ has real dimension $2N_k$, our choice of normalisation yields
 \begin{equation}\label{equ:L-scaling}\cL_k(K,\phi+c)=\cL_k(K,\phi)+c
 \end{equation}
for each constant $c\in\R$ (and similarly with $\mu$ in place of $K$) which should of course be compared to (\ref{equ:scaling}). Equivalently $\cL_k$ defines a single valued function of $(K,\phi)$ relatively to a
fixed reference weighted set, if $\vol_k$ is taken as the Lesbegue measure
which gives a unit mass to the corresponding reference ball.

We now describe our first main result:

\begin{thmA} Let $X$ be a compact complex manifold and $L$ be a big line bundle, let $(K_j,\phi_j)$, $j=1,2$ be two weighted subsets.Then as $k\to\infty$ we have
\begin{itemize}
\item[(i)] 
$$
\cL_k(K_1,\phi_1)-\cL_k(K_2,\phi_2)\to\eneq(K_1,\phi_1)-\eneq(K_2,\phi_2).
$$ 
\item[(ii)] If furthermore $\mu_j$ is a probability measure on $K_j$ with the Bernstein-Markov property with respect to $(K_j,\phi_j)$, $j=1,2$, then we have
$$\cL_k(\mu_1,\phi_1)-\cL_k(\mu_2,\phi_2)\to\eneq(K_1,\phi_1)-\eneq(K_2,\phi_2).$$
\end{itemize}
\end{thmA}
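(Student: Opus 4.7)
The plan is to prove (ii) first and then deduce (i) from it via the Bernstein--Markov hypothesis. The reduction rests on the trivial inequality $\Vert s\Vert_{L^2(\mu,k\phi)}\le\Vert s\Vert_{L^\infty(K,k\phi)}$ for a probability measure $\mu$ on $K$, together with the BM bound $\Vert s\Vert_{L^\infty(K,k\phi)}\le M_k\Vert s\Vert_{L^2(\mu,k\phi)}$ with $M_k^{1/k}\to 1$. These sandwich the unit balls, $\cB^\infty(K,k\phi)\subseteq\cB^2(\mu,k\phi)\subseteq M_k\,\cB^\infty(K,k\phi)$, and yield $\cL_k(\mu,\phi)-\cL_k(K,\phi)=O(\log M_k/k)=o(1)$. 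Granted the standard existence of a BM measure on any non-pluripolar compact, (i) then reduces to (ii).

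For (ii) the strategy is to match directional derivatives of $\cL_k$ and $\eneq$ along paths in the weight variable. Along $\phi_t:=\phi+tv$ with $v$ continuous, expanding in an $L^2(\mu,k\phi)$-orthonormal basis $(s_i)$ of $H^0(kL)$ one sees that the Gram matrix $G(t)_{ij}:=\int s_i\overline{s_j}\,e^{-2ktv}\,d\mu$ satisfies $G(0)=I$ and controls the log-volume through $\cL_k(\mu,\phi_t)-\cL_k(\mu,\phi)=-\tfrac{1}{2kN_k}\log\det G(t)$. A direct differentiation then gives
$$\frac{d}{dt}\cL_k(\mu,\phi_t)=\int v\,d\beta_{k,\phi_t},$$
where $\beta_{k,\phi}:=N_k^{-1}\rho_{k,\phi,\mu}\,\mu$ is the Bergman probability measure associated to the kernel $\rho_{k,\phi,\mu}:=\sum_i|s_i|^2_{k\phi}$. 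By the differentiability of the equilibrium energy established earlier in the paper, the analogous formula on the energy side reads $\tfrac{d}{dt}\eneq(K,\phi_t)=\int v\,d\eq(K,\phi_t)$. So, for fixed $(K,\mu)$ and varying $\phi$, the convergence in (ii) will follow from the weak convergence
$$\beta_{k,\phi}\longrightarrow\eq(K,\phi)$$
by integrating both derivative formulas in $t$ and exchanging the $t$-integral with the $k$-limit.

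To pass from fixed $(K,\mu)$ to the general setting of (ii), fix a reference weight $\phi_0$ and decompose
$$\cL_k(\mu_1,\phi_1)-\cL_k(\mu_2,\phi_2)=\bigl[\cL_k(\mu_1,\phi_1)-\cL_k(\mu_1,\phi_0)\bigr]+\bigl[\cL_k(\mu_1,\phi_0)-\cL_k(\mu_2,\phi_0)\bigr]-\bigl[\cL_k(\mu_2,\phi_2)-\cL_k(\mu_2,\phi_0)\bigr].$$
The outer brackets are pure $\phi$-variations, treated by the previous paragraph. For the middle bracket, invoke the BM equivalence once more to replace each $\cL_k(\mu_j,\phi_0)$ by $\cL_k(K_j,\phi_0)$ up to $o(1)$, and observe via the extremality defining $P_K$ that $s\in\cB^\infty(K_j,k\phi_0)$ iff $\tfrac1k\log|s|\le P_{K_j}\phi_0$ on all of $X$; in particular $\cL_k(K_j,\phi_0)$ depends on its data only through the psh weight $\psi_j:=P_{K_j}\phi_0$ with minimal singularities. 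Connecting $\psi_1$ and $\psi_2$ by the linear segment $\psi_s:=(1-s)\psi_1+s\psi_2$ of psh weights with minimal singularities, one reruns the Gram-determinant derivative argument on that segment, now relying on the non-pluripolar calculus and integration by parts of \cite{BEGZ} recalled in Section~\ref{sec:energy} so that $\tfrac{d}{ds}\cE(\psi_s)$ still pairs $\dot\psi_s$ against $\MA(\psi_s)$.

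The main obstacle is the weak convergence $\beta_{k,\phi}\to\eq(K,\phi)$: this is the precise statement that Bergman measures concentrate on the equilibrium measure of the extremal weight, and it is where the BM hypothesis is used in an essential way. Its proof hinges on Bergman kernel asymptotics governed by the (singular) extremal weight $P_K\phi$ and on BM, which forces the $L^2$-mass of sections to localise on $K$ as $k\to\infty$. A secondary difficulty is the extension of the Gram-determinant derivative argument to the psh-weight regime needed in the last step, which is technical but follows the continuous-weight template once the non-pluripolar machinery of Section~\ref{sec:energy} is in place.
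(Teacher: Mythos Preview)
There are several genuine gaps in your approach.

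\textbf{Reducing (i) to (ii) is circular.} You write ``granted the standard existence of a BM measure on any non-pluripolar compact, (i) then reduces to (ii).'' But the paper explicitly notes (in Step~2 of the proof of Corollary~A) that the existence of such a measure ``is not the general case.'' The paper therefore proves (i) \emph{first} and deduces (ii) from it via Lemma~\ref{lem:elde}, not the other way round.

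\textbf{The key Bergman convergence you need is not available.} Your argument for (ii) hinges on the weak convergence $\beta_{k,\phi}\to\eq(K,\phi)$ for a general BM triple $(K,\mu,\phi)$. The only Bergman asymptotic proved in the paper is Theorem~\ref{thm:robert}, which requires $\mu$ to be a \emph{smooth positive volume form on $X$} and $\phi$ to be $C^2$; in that case the limit is $\eq(X,\phi)$, not $\eq(K,\phi)$. The general statement you invoke is substantially harder and is not part of the paper's toolkit.

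\textbf{The decomposition step misuses the BM hypothesis.} In the middle bracket you want to replace $\cL_k(\mu_j,\phi_0)$ by $\cL_k(K_j,\phi_0)$ up to $o(1)$, but $\mu_j$ is only assumed to be BM for $(K_j,\phi_j)$, not for $(K_j,\phi_0)$; the BM property is weight-specific. Moreover, ``rerunning the Gram-determinant argument'' along the psh segment $\psi_s=(1-s)P_{K_1}\phi_0+sP_{K_2}\phi_0$ makes no sense as stated: the $\cL_k(K_j,\phi_0)$ are $L^\infty$-ball volumes, there is no measure in sight, and the $\psi_s$ are not continuous weights on a compact.

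The paper's route avoids all of this. It first settles the special case $K_1=K_2=X$ with $\phi_1,\phi_2$ smooth by integrating the Bergman-measure derivative (Lemma~\ref{lem:smooth}) along the segment and using Theorem~\ref{thm:robert} together with Theorem~B. The general case is then reduced to this one by the maximum principle $\cL_k(K,\phi)=\cL_k(X,\phi_K)$ and a two-sided monotone approximation of $P_K\phi$ by envelopes $P_X\phi_j^{\pm}$ of smooth weights (Proposition~\ref{prop:approx}), combined with monotonicity of $\cL_k(X,\cdot)$ and continuity of $\cE$ along monotone sequences.
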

Extending classical terminology we say that a probability measure $\mu$ on $K$ has the \emph{Bernstein-Markov property}Ê with respect to $(K,\phi)$ if the distortion between the $L^\infty(K,k\phi)$-norm and the $L^2(\mu,k\phi)$-norm on $H^0(kL)$ has subexponential growth as $k\to\infty$ (cf. Section~\ref{sec:BM}). Assertion (ii) of Theorem A is a rather direct consequence of (i), but conversely the proof of Theorem A settles as first step the special case of (ii) where the $\phi_j$'s are smooth and the $\mu_j$'s are smooth volume forms. It is indeed an easy consequence of the mean-value inequality that $\mu_j$ has the Bernstein-Markov property with respect to $(X,\phi_j)$ in that case (cf.~Lemma~\ref{lem:BMcont}) - and a much more precise estimate of the distortion is available in that case via Bergman kernels asymptotics. A crucial ingredient in this first step is our second main result:

\begin{thmB} Let $L$ be a big line bundle on a compact complex manifold $X$, and let $K$ be a non-pluripolar compact subset of $X$. Then $\phi\mapsto\eneq(K,\phi)$ is concave and continuous on the space of continuous weights. It is G\^ateaux differentiable, with derivatives given by integration against the equilibrium measure: 
$$\frac{d}{dt}_{t=0}\eneq(K,\phi+t v)=\langle v,\eq(K,\phi)\rangle$$
for every continuous function $v$.
\end{thmB}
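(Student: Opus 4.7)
The plan is to deduce the three assertions (concavity, continuity, differentiability of $\eneq(K,\cdot)$) essentially formally from the corresponding properties of $\cE$ on psh weights with minimal singularities (stated in the Monge-Ampère energy subsection), together with two structural facts about the extremal weight: $P_K\phi$ is a psh weight with minimal singularities, the map $\phi\mapsto P_K\phi$ is order-preserving and satisfies $P_K(\phi+c)=P_K\phi+c$, and crucially $\eq(K,\phi)$ is supported on $K$ with $P_K\phi=\phi$ a.e.\ with respect to it (the cited Proposition~\ref{prop:support}).

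For concavity, write $\phi_t=(1-t)\phi_0+t\phi_1$ with $t\in[0,1]$. Since $(1-t)P_K\phi_0+tP_K\phi_1$ is a psh weight that is $\le\phi_t$ on $K$, it lies below $P_K\phi_t$, whence by monotonicity and concavity of $\cE$ one has
$$\cE(P_K\phi_t)\ge\cE((1-t)P_K\phi_0+tP_K\phi_1)\ge(1-t)\cE(P_K\phi_0)+t\cE(P_K\phi_1),$$
and division by $\vol(L)$ gives concavity of $\eneq(K,\cdot)$. For continuity, observe that if $\|\phi_j-\phi\|_\infty\to 0$ then $P_K\phi_j-\e_j\le P_K\phi\le P_K\phi_j+\e_j$ with $\e_j:=\|\phi_j-\phi\|_\infty\to0$; thus $P_K\phi_j\to P_K\phi$ uniformly, and the stated continuity of $\cE$ along monotonic sequences of psh weights with minimal singularities (applied to $P_K\phi_j\pm\e_j$) yields $\cE(P_K\phi_j)\to\cE(P_K\phi)$, hence continuity.

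For Gâteaux differentiability — the main step — fix a continuous function $v$, set $\phi_t:=\phi+tv$, and use concavity of $\cE$ on the space of psh weights with minimal singularities to sandwich the increment. Concavity gives, for any such $\psi_1,\psi_2$, the standard two-sided tangent inequality
$$\int(\psi_2-\psi_1)\,\MA(\psi_2)\le\cE(\psi_2)-\cE(\psi_1)\le\int(\psi_2-\psi_1)\,\MA(\psi_1),$$
obtained by differentiating along the segment $(1-s)\psi_1+s\psi_2$ and using $(\ref{equ:direc})$ together with the fact that the directional derivative of $\cE$ along a concave direction is non-increasing. Apply this with $\psi_1=P_K\phi$, $\psi_2=P_K\phi_t$. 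The measure $\MA(P_K\phi)$ is carried by $K$ and satisfies $P_K\phi=\phi$ a.e.\ with respect to it; on the other hand $P_K\phi_t\le\phi_t=\phi+tv$ on $K$ by the very definition of $P_K$. Consequently, on the support of $\MA(P_K\phi)$ we have $P_K\phi_t-P_K\phi\le tv$ a.e., and the upper bound becomes
$$\cE(P_K\phi_t)-\cE(P_K\phi)\le t\int v\,\MA(P_K\phi).$$
Applying the same argument with $\phi$ and $\phi_t$ swapped produces the matching lower bound $t\int v\,\MA(P_K\phi_t)$. Dividing by $t>0$ (and reversing for $t<0$) yields
$$\int v\,\MA(P_K\phi_t)\le\frac{\cE(P_K\phi_t)-\cE(P_K\phi)}{t}\le\int v\,\MA(P_K\phi),$$
so it remains to show $\MA(P_K\phi_t)\to\MA(P_K\phi)$ weakly as $t\to 0$. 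By the uniform convergence $P_K\phi_t\to P_K\phi$ established above, this follows from the continuity of the non-pluripolar Monge-Ampère operator along uniformly convergent sequences of psh weights with minimal singularities, which is a Bedford-Taylor type result in the ample case and is provided by \cite{BEGZ} in general.

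The main obstacle is the differentiability step, and within it the crucial use of the contact-set identity $P_K\phi=\phi$ a.e.\ with respect to $\eq(K,\phi)$ — this is what turns the purely formal concavity sandwich into a sharp two-sided estimate whose extremes both converge to $\int v\,\MA(P_K\phi)$. The other delicate ingredient is the weak continuity of $\MA$ along uniform limits in the big (as opposed to ample) case; we import this from \cite{BEGZ} rather than reproving it.
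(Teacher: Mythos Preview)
Your proof is correct and takes a genuinely different route from the paper's. Both arguments share the easy parts (concavity and continuity via Lemma~\ref{lem:projection} and Propositions~\ref{prop:concave},~\ref{prop:cont_en}), and both hinge on the orthogonality relation of Proposition~\ref{prop:support}. The divergence is in the differentiability step. The paper first reduces to smooth $v$ (Lemma~\ref{lem:reg}), then proves a linearisation lemma (Lemma~\ref{lem:linearize}) reducing to the derivative of $\langle\lambda,P_K(\phi+tv)-P_K\phi\rangle$, and finally invokes the \emph{comparison principle} (Corollary~\ref{cor:comparison}) together with an auxiliary strictly psh weight $\phi_+$ to show that the $\MA(P_K\phi)$-mass of the non-contact set $\{P_K(\phi+tv)<P_K\phi+tv\}$ is $O(t)$. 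Your argument bypasses all of this: you apply the concavity tangent inequality symmetrically, using Proposition~\ref{prop:support} for \emph{both} $\phi$ and $\phi_t$ to get the two-sided sandwich, and then close it via weak continuity of $\MA$ along uniform limits (Theorem~\ref{thm:continuous}). This is shorter, needs no reduction to smooth $v$, and avoids the comparison principle entirely. The paper's route does yield the slightly sharper quantitative statement $\int_{O_t}\MA(P_K\phi)=O(t)$, but this is not used elsewhere in the proof of Theorem~B.

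One minor imprecision worth tightening: you write ``$P_K\phi_t\le\phi_t$ on $K$ by the very definition of $P_K$'', but strictly speaking only $\phi_{t,K}\le\phi_t$ holds on $K$; the usc regularisation $P_K\phi_t=(\phi_{t,K})^*$ may exceed $\phi_t$ on the negligible (hence pluripolar) set $\{(\phi_{t,K})^*>\phi_{t,K}\}$. Since $\MA(P_K\phi)$ charges no pluripolar set this does not affect your a.e.\ conclusion, and indeed you switch to ``a.e.'' in the next line, but the justification should be made explicit.
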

This result is a complex analogue of a result of Alexandrov in the setting of convex geometry~\cite{Ale} (see also~\cite{Sch} p.345). It bears a strong resemblance with the differentiability property of the volume of divisors~\cite{BFJ}, which is in some sense a non-archimedean analogue of the present result (compare~\cite{BFJ2}).  

The differentiability property can be understood as a \emph{linear reponse} property for the energy at equilibrium. Theorem B is a key tool in the proof of the arithmetic equidistribution result to be described below (Theorem D). It also found applications in equidistribution theorems for Fekete points and related results~\cite{BB2,BWN,BBWN}, in the proof of a large deviation principle for determinantal point processes~\cite{Ber3,Ber4} as well as in a variational approach to complex Monge-Amp\`ere equations~\cite{BBGZ}.

\subsection{From volumes of $L^2$-balls to transfinite diameters}
Given a basis $S=(s_1,...s_N)$ of $H^0(L)$ let
$$
\det S\in H^0(X^N,L^{\boxtimes N})
$$ 
be the determinant section, locally defined by

$$(\det S)(x_1,...,x_N):=\det(s_i(x_j))_{i,j}.$$
Given a weighted subset $(K,\phi)$ and a probability measure $\mu$ on $K$ the $L^\infty$-norm (resp. $L^2$ norm ) of $\det S$ with respect to the induced probability measure $\mu^N$ on $K^N$ and the induced weight
$$(x_1,...,x_N)\mapsto\psi(x_1)+...+\psi(x_N)$$
on $L^{\boxtimes N}$ will simply be denoted by
$$
\Vert\det S\Vert_{L^\infty(K,\phi)}:=\sup_{(x_1,...,x_N)\in K^N}|\det(s_i(x_j))|e^{-\left(\phi(x_1)+...+\phi(x_N)\right)}
$$
and
$$
\Vert\det S\Vert^2_{L^2(\mu,\phi)}:=\int_{(x_1,...,x_N)\in X^N}|\det(s_i(x_j))|^2e^{-2\left(\phi(x_1)+...+\phi(x_N)\right)}\mu(dx_1)...\mu(dx_N).
$$
In the classical case $(X,L)=(\PP^n,\cO(1))$ we may choose $S_k$ as the set of monomials of degree at most $k$. Given a weighted compact subset $(K,\phi)$ the limit
$$
\lim_{k\to\infty}\Vert\det S_k\Vert_{L^\infty(K,k\phi)}^{1/k^{n+1}}
$$ 
provided it is shown to exist, coincides with Leja's definition of the \emph{transfinite diameter} of $(K,\phi)$ - up to an exponent only depending on $n$. The existence of the limit in the unweighted case was in fact only proved in 1975 by Zaharjuta~\cite{Zah}. 

The basis $S_k$ of monomials is orthonormal with respect to $L^2(\nu,\psi)$, $\nu$ denotes the Haar measure on the compact torus $\T^n\subset\C^n$ and $\psi=\log|X_0|$ denotes the weight on $\cO(1)$ induced by the section cutting out the hyperplane at infinity. Since $\nu$ is known to have the Bernstein-Markov property with respect to $(\T^n,\psi)$ (\cite{NZ}, cf. also Section~\ref{sec:BM}), the next result generalizes in particular Zaharjuta's:

\begin{corA} Let $(E,\psi)$ be a weighted subset and let $\nu$ be a probability measure on $E$ with the Bernstein-Markov property. For each $k$, let $S_k$ be an $L^2(\nu,k\psi)$-orthonormal basis of $H^0(kL)$. 
\begin{itemize}
\item[(i)] For every weighted subset $(K,\phi)$ we have
$$
\lim_{k\to\infty}\frac{1}{k N_k}\log\Vert\det S_k\Vert_{L^\infty(K,k\phi)}=\eneq(E,\psi)-\eneq(K,\psi).
$$ 
\item[(ii)]  If $\mu$ is a probability measure with the Bernstein-Markov property for $(K,\phi)$ then 
$$
\lim_{k\to\infty}\frac{1}{k N_k}\log\Vert\det S_k\Vert_{L^2(\mu,k\phi)}=\eneq(E,\psi)-\eneq(K,\psi).
$$
\end{itemize}
\end{corA}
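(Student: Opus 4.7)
The plan is to derive both parts of the corollary from Theorem~A by relating the norms $\Vert\det S_k\Vert_{L^\bullet(\cdot,k\phi)}$ to the ball volumes $\vol_k\cB^\bullet(\cdot,k\phi)$ on $H^0(kL)$. The common bridge is a classical determinant identity: if one uses $S_k$ to identify $H^0(kL)\cong\C^{N_k}$ and takes $\vol_k$ to be standard Lebesgue in these coordinates, then
\[
\vol_k\cB^2(\mu,k\phi)\,\cdot\,\Vert\det S_k\Vert^2_{L^2(\mu,k\phi)}\,=\,\pi^{N_k},
\]
since each factor equals $\pi^{N_k}/(N_k!\det G_k)$ and $N_k!\det G_k$ respectively, where $G_k$ is the $L^2(\mu,k\phi)$-Gram matrix of $S_k$.

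\textbf{Part (ii).} First I would exploit the fact that $S_k$ is orthonormal for $L^2(\nu,k\psi)$, so that $\vol_k\cB^2(\nu,k\psi)=\pi^{N_k}/N_k!$ and $\Vert\det S_k\Vert^2_{L^2(\nu,k\psi)}=N_k!$. Taking the logarithmic ratio of the two balls yields
\[
\cL_k(\mu,\phi)-\cL_k(\nu,\psi)\,=\,\frac{\log N_k!}{2kN_k}\,-\,\frac{1}{kN_k}\log\Vert\det S_k\Vert_{L^2(\mu,k\phi)}.
\]
Since $N_k=O(k^n)$, the first term is $o(1)$, and Theorem~A(ii) determines the limit of the left-hand side, giving (ii).

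\textbf{Part (i).} For the $L^\infty$ case, I would produce two sandwich inequalities relating $\vol_k\cB^\infty(K,k\phi)$ to $\Vert\det S_k\Vert_{L^\infty(K,k\phi)}$. Evaluation at any tuple $(x_j)\in K^{N_k}$ embeds $\cB^\infty(K,k\phi)$ into the polydisk preimage of volume $\pi^{N_k}/|\det(s_i(x_j))|^2e^{-2k\sum\phi(x_j)}$ (the complex Jacobian in basis $S_k$); optimising over tuples gives $\vol_k\cB^\infty(K,k\phi)\le\pi^{N_k}/\Vert\det S_k\Vert^2_{L^\infty(K,k\phi)}$. Conversely, at a Fekete tuple the Lagrange interpolation sections $\ell_j\in H^0(kL)$ satisfy $\Vert\ell_j\Vert_{L^\infty(K,k\phi)}\le 1$ by maximality of the weighted determinant, so $\cB^\infty(K,k\phi)$ contains $N_k^{-1}$ times the Fekete polydisk preimage, yielding the matching lower bound at the cost of a factor $N_k^{-2N_k}$. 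Taking $\tfrac{1}{2kN_k}\log$ and using $\log N_k/k\to 0$,
\[
\cL_k(K,\phi)\,=\,-\frac{1}{kN_k}\log\Vert\det S_k\Vert_{L^\infty(K,k\phi)}\,+\,o(1),
\]
with the analogous identity for $(E,\psi)$. Theorem~A(i) applied to $(K,\phi)$ and $(E,\psi)$ then reduces (i) to showing that $(kN_k)^{-1}\log\Vert\det S_k\Vert_{L^\infty(E,k\psi)}\to 0$.

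\textbf{The hard part} is precisely this last claim: a \emph{product} Bernstein--Markov estimate for the section $\det S_k\in H^0(X^{N_k},L^{\boxtimes N_k})$ on the cartesian product $E^{N_k}$. The lower bound $\Vert\det S_k\Vert_{L^\infty(E,k\psi)}\ge\Vert\det S_k\Vert_{L^2(\nu^{N_k},k\psi\boxplus\cdots\boxplus k\psi)}=\sqrt{N_k!}$ is immediate since $\nu^{N_k}$ is a probability measure supported in $E^{N_k}$. For the upper bound I would iterate the one-variable Bernstein--Markov constant $C_k$ for $(E,\psi,\nu)$ --- which satisfies $\log C_k=o(k)$ --- one coordinate at a time, applying BM in the remaining variable of $\det S_k$ (an element of $H^0(kL)$ once the other $N_k-1$ variables are frozen) and interchanging suprema and integrals via $\sup\int\le\int\sup$. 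Each of the $N_k$ stages contributes a factor $C_k^2$, resulting in $\Vert\det S_k\Vert^2_{L^\infty(E,k\psi)}\le C_k^{2N_k}\,N_k!$. The delicate point is that one must check the sup/integral interchange preserves a uniform BM bound at each stage; once this is done, the total factor is subexponential in $kN_k$ because $N_k\log C_k/(kN_k)=\log C_k/k\to 0$, which yields the desired $o(1)$ estimate and completes the proof of (i).
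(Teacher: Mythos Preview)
Your Part (ii) is essentially the paper's Step 1: both reduce to Theorem~A(ii) via the Gram-determinant identity $\Vert\det S_k\Vert^2_{L^2(\mu,k\phi)}=N_k!\,\vol_k\cB^2(\nu,k\psi)/\vol_k\cB^2(\mu,k\phi)$ and the estimate $\log N_k!=o(kN_k)$.

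Your Part (i), however, takes a genuinely different route. The paper first proves (i) only for $(K,\phi)$ admitting a Bernstein--Markov measure $\mu$ (its Step~2, via the iterated BM bound to compare $L^\infty$ and $L^2$ det-norms on $K$), and then handles an \emph{arbitrary} weighted subset by a monotone approximation argument (its Step~3): using $\cD_k(K,\phi)=\cD_k(X,\phi_K)$ (Proposition~\ref{prop:max}), monotonicity of $\cD_k(X,\cdot)$, and squeezing $P_K\phi$ between $P_X\phi_j^-$ and $P_X\phi_j^+$ as in Proposition~\ref{prop:approx}. You bypass this approximation entirely by the Fekete/evaluation sandwich
\[
\pi^{N_k}N_k^{-2N_k}\,\Vert\det S_k\Vert^{-2}_{L^\infty(K,k\phi)}\;\le\;\vol_k\cB^\infty(K,k\phi)\;\le\;\pi^{N_k}\,\Vert\det S_k\Vert^{-2}_{L^\infty(K,k\phi)},
\]
which gives $\cL_k(K,\phi)=-\tfrac{1}{kN_k}\log\Vert\det S_k\Vert_{L^\infty(K,k\phi)}+o(1)$ for \emph{any} non-pluripolar $K$, with the $o(1)$ uniform. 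Applying Theorem~A(i) directly then leaves only the calibration term $(kN_k)^{-1}\log\Vert\det S_k\Vert_{L^\infty(E,k\psi)}$, and here you invoke iterated BM on the \emph{reference} pair $(E,\psi,\nu)$, which has BM by hypothesis. So both proofs use the same product Bernstein--Markov estimate (your ``hard part'' is exactly the paper's Step~2 computation), but the paper applies it on $K$ and then approximates, whereas you apply it on $E$ and avoid approximation altogether thanks to the Fekete sandwich. Your path is more self-contained; the paper's path recycles the machinery already built for Theorem~A. Both are correct, and the iterated BM step is not as delicate as you fear: at each stage one freezes all but one variable, applies the single-variable BM constant $R_k$ to a genuine element of $H^0(kL)$, and uses the trivial inequality $\sup\int\le\int\sup$, so the constant is exactly $R_k$ at every stage.
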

In the $\C^n$ case, the existence of the limit in (i) in the weighted case was also independently obtained in~\cite{BL3} using~\cite{Rum}. 

Let us quickly explain how Corollary A relates to Theorem A. Since $L^2$-norms are induced by scalar products, ratios of $L^2$-balls can be expressed as Gram determinants:
\begin{equation}\label{equ:gram}\frac{\vol\cB^2(\nu,\psi)}{\vol\cB^2(\mu,\phi)}=\det\left(\langle s_i,s_j\rangle_{L^2(\mu,\phi)}\right)_{i,j},\end{equation}
where $S=(s_1,...,s_{N})$ is an $L^2(\nu,\psi)$-orthonormal basis of $H^0(L)$. On the other hand a row and column expansion of the determinant shows that

\begin{equation}\label{equ:detL2}\Vert\det S\Vert^2_{L^2(\mu,\phi)}=N!\det\left(\langle s_i,s_j\rangle_{L^2(\mu,\phi)}\right)_{i,j}.
\end{equation}

We thus get 
$$
\frac{1}{k N_k}\log\Vert\det S_k\Vert_{L^2(\mu,k\phi)}=\cL_k(\nu,k\psi)-\cL_k(\mu,k\phi)+\frac{1}{2kN_k}\log N_k!
$$
which shows that (ii) of Corollary A is equivalent to (ii) of Theorem A since $\log N_k!=O(k^n\log k)=o(kN_k)$. 

We give in Proposition~\ref{prop:recursion} a recursion formula relating the Monge-Amp\`ere energy on $X$ to that on a hypersurface $Y$. It shows that Corollary A contains in particular Rumely's Robin-type formula for the transfinite diameter in $\C^n$~\cite{Rum}. We also show how to recover DeMarco-Rumely's results ~\cite{DMR} in Section~\ref{sec:resultant}. 

\subsection{Applications to analytic torsion and Arakelov geometry.}
In the last part of the paper, we give two further applications of Theorems A and B related to Arakelov geometry. As a consequence of Theorem A, we will first describe the asymptotic behaviour of the Ray-Singer analytic torsion $T(k\phi)$ of large multiples of a smooth weight $\phi$ with arbitrary curvature (computed with respect to a fixed K\"ahler metric $\omega$), refining results of Bismut-Vasserot~\cite{BV}. More specifically we prove:

\begin{thmC}\label{thm:torsion} If $L$ is an ample line bundle and $\phi$ is a smooth weight on $L$ with arbitrary curvature, then 
$$\lim_{k\to\infty}\frac{n!}{2k^{n+1}}T(k\phi)=\cE(\phi)-\cE(P_X\phi).$$
\end{thmC}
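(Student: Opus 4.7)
The plan is to reduce to a smooth reference weight $\phi_{0}$ with strictly positive curvature $dd^{c}\phi_{0}>0$, which exists because $L$ is ample. Such a $\phi_{0}$ being psh, one has $P_{X}\phi_{0}=\phi_{0}$, so the right-hand side of the claimed formula vanishes at $\phi_{0}$, while Bismut--Vasserot's theorem gives $T(k\phi_{0})=O(k^{n}\log k)=o(k^{n+1})$. The proof therefore reduces to computing the asymptotic difference $T(k\phi)-T(k\phi_{0})$ as $k\to\infty$. Since $L$ is ample, for $k$ large enough Kodaira vanishing yields $H^{q}(X,kL)=0$ for $q\ge 1$, so $\det H^{\bullet}(X,kL)=\det H^{0}(X,kL)$; fixing a generator $\sigma_{k}=s_{1}\wedge\cdots\wedge s_{N_{k}}$ of this line together with a smooth volume form $dV$ on $X$, the defining property of the Quillen metric reads
$$T(k\phi)=\log\Vert\sigma_{k}\Vert^{2}_{L^{2}(dV,k\phi)}-\log\Vert\sigma_{k}\Vert^{2}_{Q,k\phi}.$$
We treat the two contributions on the right separately.

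For the $L^{2}$ contribution, observe that $\Vert\sigma_{k}\Vert^{2}_{L^{2}(dV,k\phi)}=\det G_{k}^{\phi}$, where $G_{k}^{\phi}=\bigl(\langle s_{i},s_{j}\rangle_{L^{2}(dV,k\phi)}\bigr)_{i,j}$, while a standard Gaussian integral gives $\log\vol_{k}\cB^{2}(dV,k\phi)=-\log\det G_{k}^{\phi}+c_{k}$ with $c_{k}$ independent of $\phi$. By the definition~(\ref{equ:L-funcl2}) of $\cL_{k}$ this rewrites as
$$\log\Vert\sigma_{k}\Vert^{2}_{L^{2}(dV,k\phi)}-\log\Vert\sigma_{k}\Vert^{2}_{L^{2}(dV,k\phi_{0})}=2kN_{k}\bigl(\cL_{k}(dV,\phi_{0})-\cL_{k}(dV,\phi)\bigr).$$
Since a smooth volume form has the Bernstein--Markov property against any continuous weight (Lemma~\ref{lem:BMcont}), Theorem~A(ii) applies; combined with $P_{X}\phi_{0}=\phi_{0}$ and the asymptotic $kN_{k}=\vol(L)k^{n+1}/n!+o(k^{n+1})$, it delivers
$$\log\Vert\sigma_{k}\Vert^{2}_{L^{2}(dV,k\phi)}-\log\Vert\sigma_{k}\Vert^{2}_{L^{2}(dV,k\phi_{0})}=\frac{2k^{n+1}}{n!}\bigl(\cE(\phi_{0})-\cE(P_{X}\phi)\bigr)+o(k^{n+1}).$$

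For the Quillen contribution, we appeal to the Bismut--Gillet--Soul\'e curvature formula: along the linear path $\phi_{t}:=(1-t)\phi_{0}+t\phi$ and keeping only the leading order in $k$, it asserts
$$\frac{d}{dt}\log\Vert\sigma_{k}\Vert^{2}_{Q,k\phi_{t}}=-\frac{2k^{n+1}}{n!}\int_{X}(\phi-\phi_{0})\MA(\phi_{t})+O(k^{n}),$$
which after integration in $t$ and use of the defining identity~(\ref{equ:direc}) of $\cE$ becomes
$$\log\Vert\sigma_{k}\Vert^{2}_{Q,k\phi}-\log\Vert\sigma_{k}\Vert^{2}_{Q,k\phi_{0}}=-\frac{2k^{n+1}}{n!}\bigl(\cE(\phi)-\cE(\phi_{0})\bigr)+o(k^{n+1}).$$
Subtracting the two contributions, the $\cE(\phi_{0})$ terms cancel and we obtain
$$T(k\phi)-T(k\phi_{0})=\frac{2k^{n+1}}{n!}\bigl(\cE(\phi)-\cE(P_{X}\phi)\bigr)+o(k^{n+1}),$$
whereupon dividing by $2k^{n+1}/n!$ and absorbing $T(k\phi_{0})=o(k^{n+1})$ via Bismut--Vasserot finishes the proof. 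The main technical point, where most care will be needed, is the rigorous identification of the leading $k^{n+1}/n!$ coefficient in the BGS variation formula above: this amounts to a Bergman-kernel expansion on the diagonal valid for smooth weights of \emph{arbitrary} curvature, together with a careful bookkeeping of the factor of two coming from $\log|\cdot|^{2}=2\log|\cdot|$ and of the sign convention used to define the analytic torsion.
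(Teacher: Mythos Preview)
Your argument is correct and follows essentially the same route as the paper: pick a smooth strictly psh reference weight, use Bismut--Vasserot to kill its torsion, use the Bismut--Gillet--Soul\'e anomaly formula to compute the variation of the Quillen metric as a Bott--Chern secondary form with leading term $\frac{2k^{n+1}}{n!}(\cE(\phi)-\cE(\phi_0))$, and use Theorem~A to handle the $L^2$-metric contribution.

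One remark: your closing caveat is misplaced. The BGS formula (equation~(\ref{equ:quillen}) in the paper) is an \emph{exact} identity expressing the variation of the Quillen metric as $\int_X\widetilde{\mathrm{ch}}(k\phi,k\phi_0)\wedge\mathrm{td}(\omega)$; extracting the leading $k^{n+1}$-term is then an elementary expansion of the Bott--Chern form, cf.~(\ref{equ:bott-chern_bis}), and requires no Bergman-kernel asymptotics whatsoever. The Bergman-kernel input (for weights of arbitrary curvature, via Theorem~\ref{thm:robert}) enters only through Theorem~A, i.e.\ in your treatment of the $L^2$ contribution, not in the Quillen part.
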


Our second application is a generalisation of Yuan's equidistribution theorem for points of small height~\cite{Yua} to the case of a \emph{big} line bundle (but at archimedean places only). Assume that $X$ is a smooth projective variety defined over a number field, say $\Q$ for simplicity. Let $L$ be a big line bundle on $X/\Q$. Denoting by $\A$ the ad\`eles of $\Q$, $H^0(kL)_\Q$ embeds as a co-compact subgroup of 
$$H^0(kL)_\A\subset H^0(kL)_\R\times\Pi_p H^0(L)_{\Q_p}$$ 
which enables us to normalise the Haar measure $\vol^\A_k$ on $H^0(kL)_\A$ by 
$$\vol^\A_k H^0(kL)_\A/H^0(kL)_\Q=1.$$    

Suppose given a collection $(\phi_p)$ of continuous weights on $L_{\C_p}$ over $X(\C_p)$ for every prime $p$ such that all but finitely of them are induced by a model of $X$ over $\Z$. The superscript $\A$ will be used to indicate that an object implicitly depends on $(\phi_p)$. 

If $\phi$ is a continuous weight on $L_\C$ over $X(\C)$ we define the \emph{adelic unit ball}
\begin{equation}\label{equ:ad_ball}\cB^\A(\phi):=H^0(L)_\A\cap\left(\cB^\infty_\R(\phi)\times\Pi_p\cB^\infty_{\Q_p}(\phi_p)\right)
\end{equation}
and we can then consider the corresponding adelic $\cL$-functionals
\begin{equation}\label{equ:ela}\ela(\phi):=\frac{1}{kN_k}\log\vol^\A_k\cB^\A(k\phi)
\end{equation}
As opposed to the other $\cL$-functionals introduced so far, the adelic $\cL$-functionals $\ela$ are well-defined without any further normalisation issue. 

We now introduce the \emph{adelic energy at equilibrium} as  
$$\ena(\phi):=\limsup_{k\to\infty}\ela(\phi)\in[-\infty,+\infty].$$
The exponential of the right-hand side is called the \emph{sectional capacity} in~\cite{RLV}, where it is proved that the limsup actually is a limit when $L$ ample. Still assuming that $L$ is merely big, Theorems A and B together will enable us to show (Lemma~\ref{lem:arith}) that $\ena(\cdot)$ is differentiable at any weight $\phi$ where it is finite, with derivative given by integration against the equilibrium measure $\eq(X(\C),\phi)$ 

On the other hand, the above data allows to define the \emph{height} $h^\A_\phi(x)$ of any point $x\in X(\overline{\Q})$ (cf.~\ref{equ:height}). If $x_j\in X(\overline{\Q})$ is a \emph{generic} sequence, that is a sequence converging to the generic point of $X$ in the Zariski topology, then it is an easy consequence of the adelic Minkowski theorem (cf. Section~\ref{sec:heights}) that their heights admit the asymptotic lower bound
$$\liminf_{j\to\infty} h^\A_\phi(x_j)\ge\ena(\phi).$$
Following the original variational principle first used by Szpiro, Ullmo and Zhang~\cite{SUZ}, we will prove
\begin{thmD} Using the above notations, supppose that $x_{j}\in X(\overline{\Q})$ is a generic sequence such that
$$\lim_{j\to\infty}h_\phi(x_j)=\ena(\phi)\in\R.$$
Then the Galois orbits of the $x_{j}$'s equidistribute on $X(\C)$ as $j\to\infty$ towards the equilibrium measure $\eq(X(\C),\phi)$.    
\end{thmD}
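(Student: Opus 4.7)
The plan is to adapt the variational principle of Szpiro--Ullmo--Zhang, with Theorems A and B (packaged in the differentiability lemma for the adelic energy $\ena$) providing the crucial analytic input in place of what in the ample case is handled via Arakelov intersection theory. Fix a continuous test function $v$ on $X(\C)$ and let
$$\mu_j := \frac{1}{\#O(x_j)}\sum_{y\in O(x_j)}\delta_y$$
denote the Galois-averaged empirical measure of $x_j$. Weak convergence $\mu_j\to\eq(X(\C),\phi)$ reduces to proving $\langle v,\mu_j\rangle\to\langle v,\eq(X(\C),\phi)\rangle$ for every such $v$; by applying the final result to both $v$ and $-v$, it suffices to establish the one-sided bound
$$\liminf_j \langle v,\mu_j\rangle \ge \langle v,\eq(X(\C),\phi)\rangle.$$

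The crux is that perturbing only the archimedean weight by $\phi\mapsto\phi+tv$ shifts the height additively through the orbital average:
$$h^\A_{\phi+tv}(x_j) = h^\A_\phi(x_j) + t\langle v,\mu_j\rangle,$$
since the non-archimedean contributions are unchanged while the archimedean contribution of a point of $X(\overline{\Q})$ is by definition the mean of the local weight over its Galois orbit. Combining this with the adelic Minkowski lower bound
$$\liminf_j h^\A_{\phi+tv}(x_j) \ge \ena(\phi+tv)$$
(valid for generic sequences and recalled in Section~\ref{sec:heights}), together with the hypothesis $h^\A_\phi(x_j)\to\ena(\phi)\in\R$, one extracts the convergent summand from the $\liminf$ to obtain, for every $t>0$,
$$\ena(\phi) + t\liminf_j \langle v,\mu_j\rangle \ge \ena(\phi+tv).$$
Dividing by $t$ and letting $t\to 0^+$ yields
$$\liminf_j \langle v,\mu_j\rangle \ge \liminf_{t\to 0^+}\frac{\ena(\phi+tv)-\ena(\phi)}{t}.$$

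The proof is completed by identifying this right-hand side with $\langle v,\eq(X(\C),\phi)\rangle$, which is precisely the G\^ateaux differentiability of $\ena$ at $\phi$ asserted by Lemma~\ref{lem:arith}. The main obstacle is not the variational step itself but this differentiability lemma: one must control the $\limsup$ defining $\ena(\phi+tv)$ uniformly enough in $t$ to promote a one-sided perturbative inequality into a genuine derivative. The strategy is to exploit the product structure of the adelic unit ball $\cB^\A(k\phi)$ to reduce $\ela(\phi+tv)-\ela(\phi)$ to a difference of purely archimedean $\cL_k$-functionals, apply Theorem~A to identify the $k\to\infty$ limit of this difference with $\eneq(X(\C),\phi+tv)-\eneq(X(\C),\phi)$, and finally invoke the differentiability supplied by Theorem~B. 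Once the lemma is secured, applying the displayed bound to both $v$ and $-v$ sandwiches $\langle v,\mu_j\rangle$ against $\langle v,\eq(X(\C),\phi)\rangle$, completing the proof.
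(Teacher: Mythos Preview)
Your proof is correct and follows essentially the same variational approach as the paper. The only cosmetic difference is that the paper packages the final step as an abstract lemma about sequences of concave functions $f_j$ satisfying $\liminf_j f_j\ge g$ and $\lim_j f_j(0)=g(0)$ (concluding $\lim_j f_j'(0)=g'(0)$), whereas you carry out the same computation directly and replace the two-sided limit $t\to 0^\pm$ by applying the one-sided bound to both $v$ and $-v$; these are equivalent.
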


\subsection{Structure of the paper}
\begin{itemize}
\item Sections~\ref{sec:prelim} and \ref{sec:BM} contain preliminary results on Monge-Amp\`ere operators and Bergman kernels asympotics. 

\item Section~\ref{sec:energy} extends to our singular setting standard facts on the Monge-Amp\`ere energy functional, and contains the proof of Theorem B. 

\item Section~\ref{sec:thmA} contains the proofs of Theorem A and Corollary A followed by a sketch of an alternative argument in the ample case. 

\item Section~\ref{sec:classical} presents applications to the $\C^n$ setting. 

\item Finaly Section~\ref{sec:equi} presents applications to Arakelov geometry, in particular the proof of Theorem D.
\end{itemize}

\begin{ackn} We would like to thank B.~Berndtsson, F.~Berteloot, A.~Chambert-Loir, J.-P.~Demailly, V.~Guedj, C.~Mourougane, N.~Levenberg and A.~Zeriahi for interesting discussions related to the contents of the present paper. We are especially grateful to N.~Levenberg for pointing out a gap in the proof of Theorem B in a previous version of this work. Finally we thank the anonymous referees for some useful suggestions regarding the organization of the article. 
\end{ackn}

\section{\label{sec:prelim}Mixed Monge-Amp{\`e}re operators and equilibrium weights}
The goal of this section is to collect some results on mixed Monge-Amp{\`e}re
operators that are required to study the Monge-Amp\`ere energy functional in the case of a \emph{big} line bundle $L$. 

The reader primarily interested in the case of an \emph{ample} line bundle
will realise that the results we mention are completely standard in
that setting (cf.~for instance~\cite{GZ}), and proofs in the general case can be found in~\cite{BEGZ}

\subsection{Weights vs.~metrics}\label{sec:weights}

Let $X$ be a complex manifold. We will use the additive notation for
the Picard group of line bundles on $X$, that is given line bundles $L,M$ on $X$ we will write $L+M:=L\otimes M$ and $kL:=L^{\otimes k}$. Similarly we want to use an additive notation for singular Hermitian metrics on line bundles. This is formally achieved through the following definition. 

\begin{defi} A \emph{weight}Ê $\phi$ on a line bundle $L$ over $X$ is a locally integrable function on the complement of the zero-section in the total space of the dual line bundle $L^*$ satisfying the log-homogeneity property
$$\phi(\lambda v)=\log|\lambda|+\phi(v)$$
for all non-zero $v\in L^*$, $\lambda\in\C$. 
\end{defi}
Setting
$$|w|_h:=|\langle w,v\rangle| e^{-\phi(v)}$$
for every non-zero vector $w\in L$ (resp. $v\in L^*$) establishes a bijection $\phi\mapsto h$ between the set of weights $\phi$ on $L$ and the set  of singular hermitian metrics $h$ on $L$, and we will simply denote by $h=e^{-\phi}$ the metric on $L$ induced by $\phi$. 

If we let $p:L^*\to X$ be the fibre projection then for every two weights $\phi_1$, $\phi_2$ on $L$ we have $\phi_1-\phi_2=u\circ p$ for a unique function $u\in L^1_{\text{loc}}(X)$. We will simply identify $\phi_1-\phi_2$ with the corresponding function on $X$, so that the set of all weights on $L$ becomes an affine space modelled on $L_{\text{loc}}^{1}(X)$.

A section $s\in H^0(X,L)$ induces a weight on $L$ denoted by
$\log|s|$ and defined by 
$$\log|s|(v):=\log|\langle s,v\rangle|$$ 
for $v\in L^*$. Note that the pointwise length of $s$ in terms of the Hermitian metric $e^{-\phi}$ is equal to $\exp(\log|s|-\phi)$, i.e. we have
$$|s|_\phi=|s|e^{-\phi}.$$

The curvature current of the singular metric $e^{-\phi}$ pulls-back to $dd^c\phi$ under the projection $p:L^*\to X$ and we will somewhat abusively denote by $dd^{c}\phi$ the curvature current on $X$ itself. One must be careful with this suggestive notation, since the curvature current $dd^{c}\phi$
is definitely not \emph{exact} on $X$ in general. We have set as usual
$dd^{c}=:\frac{i}{\pi}\partial\overline{\partial}$ in order to ensure that
the cohomology class of the closed current $dd^{c}\phi$ coincides with the first Chern class $c_{1}(L)\in H^{2}(X,\R)$. With this normalisation the current $dd^c\log|s|$ is equal to the integration current on the zero-divisor of $s$ as a consequence of the Lelong-Poincar\'e formula. 

We will say that a weight $\phi$ is plurisubharmonic (psh for short) if it is psh as a function on the total space $L^*$. The curvature current $dd^c\phi$ is thus a positive (in the French sense of the word, i.e.~non-negative) $(1,1)$-current. This formalism relates to the notion of quasi-psh functions as follows. If $\theta$
is a given closed $(1,1)$-form, a (usc, locally integrable) function
$u$ on $X$ is said to be $\theta$-psh iff $\theta+dd^{c}u\geq0$.
When the cohomology class of $\theta$ is the first Chern class $c_{1}(L)$,
there exists a smooth weight $\phi_{0}$ on $L$, unique up to a constant,
such that $dd^{c}\phi_{0}=\theta$. It follows that $\phi\mapsto u=\phi-\phi_{0}$
establishes a bijection between the set of psh weights $\phi$ on
$L$ and the set of $\theta$-psh functions $u$ on $X$, and we have
$dd^{c}\phi=\theta+dd^{c}u$.

\subsection{Big bundles and minimal singularities}\label{sec:big}
Recall that a line bundle $L$ on a compact complex manifold $X$ is said to be \emph{pseudo-effective} (\emph{psef} for short) iff it admits a psh weight. The line bundle $L$ is said to be \emph{big} iff its \emph{volume} 
$$\vol(L):=\limsup_{k\to\infty}\frac{n!}{k^{n}}h^{0}(kL)$$
 is positive. Here we write as usual by $h^{0}:=\dim H^{0}$, and the $\limsup$ is actually a limit as a consequence of Fujita's theorem. A theorem independently proved by Bonavero~\cite{Bon} and Ji-Shiffmann~\cite{JS} asserts that $L$ is big iff it admits a \emph{strictly psh} weight, i.e.~a singular weight $\phi$ whose curvature current $dd^c\phi$ dominates a (smooth) positive $(1,1)$-form. 
 
 It follows from Demailly's regularisation theorem~\cite{Dem3} that $\phi$ can then be chosen to have analytic singularities, and in particular to be locally bounded on a Zariski open subset $\Omega$ of $X$. Finally note that $X$ is Moishezon, i.e.~bimeromorphic to a projective manifold, iff it admits a big line bundle. 
 
Given two psh weights $\phi_{1},\phi_{2}$ on $L$, one says that $\phi_{1}$
is \emph{more singular} than $\phi_{2}$ if $\phi_{1}\leq\phi_{2}+O(1)$.
As has been observed by Demailly, any pseudo-effective line bundle $L$
admits psh weights with minimal singularities in this sense.  Indeed given a smooth weight $\phi$ on $L$ the equilibrium weight  
$$P_X\phi=\sup\left\{ \psi,\,\psi\,\textrm{psh weight on$\, L$},\,\psi\leq\phi\right\}$$
is automatically (usc and) psh, and it plainly has minimal singularities.
We will at any rate come back to this construction in what follows.

Note that the difference between any two psh weights with minimal
singularities is a bounded function by definition. When $L$ is ample, the psh weights
with minimal singularities are exactly the locally bounded psh weights,
and in the general case the former appear to share many of the nice properties
the latter exhibit in the setting of pluripotential theory.

When $L$ is only big, there exists as we saw a strictly psh weight that is locally bounded on a Zariski open subset $\Omega$
of $X$. It follows that \emph{every} psh weight with minimal singularities on $L$
is locally bounded on this same $\Omega$.

\subsection{Mixed Monge-Amp{\`e}re operators and comparison principle}\label{sec:mixed}
As explained above, results in this section are standard when dealing
with ample line bundles. Indeed, they all follow from Bedford-Taylor's
local results for locally bounded psh weights. The proofs in the general 
situation where line bundles are merely big can be found in~\cite{BEGZ}.

Let $L$ be a big line bundle. By what we saw above, we can choose a Zariski open subset $\Omega$ on which every psh weight with minimal singularities is locally bounded. 

Now let $\phi_{1},...,\phi_{n}$ be psh weights on $L$ that are locally bounded on $\Omega$. We can then define the Bedford-Taylor wedge product
$$dd^{c}\phi_{1}\wedge...\wedge dd^{c}\phi_{n}$$
as a positive measure on $\Omega$. Recall that this is done by locally setting $dd^cu\wedge T:=dd^c(uT)$ whenever $u$ is a locally bounded psh function and $T$ is a closed positive current (which thus has measure coefficients). It was proved by Bedford-Taylor~\cite{BT82} that the resulting measure $dd^c\phi_1\wedge...\wedge dd^c\phi_n$ puts no mass on pluripolar subsets of $\Omega$. The following result is proved in~\cite{BEGZ}.

\begin{thm}\label{thm:mass}
Let $\phi_{1},...,\phi_{n}$ (resp.~$\psi_1,...,\psi_n$) be psh weights on $L$ that are locally bounded on a Zariski open subset $\Omega$. If $\phi_j$ is less singular than $\psi_j$ for all $j$, then we have 
$$\int_\Omega dd^c\psi_1\wedge...\wedge dd^c\psi_n\le\int_\Omega dd^c\phi_1\wedge...\wedge dd^c\phi_n\le\vol(L).$$
Equality holds on the right-hand side when the $\phi_j$'s have minimal singularities. 
\end{thm}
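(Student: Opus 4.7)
The plan is to establish the monotonicity inequality first, and then derive the upper bound and the equality case as direct consequences. Throughout, I would rely on the framework of non-pluripolar Monge--Amp\`ere products and the existence of minimally singular psh weights discussed in Section~\ref{sec:big}.

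For the monotonicity, I would begin by reducing to the case where $\phi_j\ge\psi_j$ pointwise on $X$: since $\phi_j$ is less singular than $\psi_j$, there is a constant $C_j$ with $\psi_j\le\phi_j+C_j$ everywhere, and the substitution $\phi_j\mapsto\phi_j+C_j$ leaves both $dd^c\phi_j$ and the choice of $\Omega$ unchanged. Next, a telescoping expansion
\[
\int_\Omega\bigwedge_{j=1}^n dd^c\phi_j-\int_\Omega\bigwedge_{j=1}^n dd^c\psi_j=\sum_{k=1}^n\int_\Omega dd^c(\phi_k-\psi_k)\wedge T_k,
\]
with $T_k:=dd^c\phi_1\wedge\cdots\wedge dd^c\phi_{k-1}\wedge dd^c\psi_{k+1}\wedge\cdots\wedge dd^c\psi_n$ a closed positive current on $\Omega$, reduces the problem to the single-variable statement that for a locally bounded psh weight $\phi\ge\psi$ on $\Omega$ and a fixed wedge $T$ of curvature currents of such weights,
\[
\int_\Omega dd^c(\phi-\psi)\wedge T\ge 0.
\]

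To prove this one-variable inequality, I would approximate $\phi,\psi$ and each of the factors making up $T$ by truncations of the form $\phi^t:=\max(\phi,\phi_{\min}-t)$, where $\phi_{\min}$ is an auxiliary psh weight with minimal singularities (existing as recalled in Section~\ref{sec:big}). The truncated weights are themselves minimally singular, hence globally bounded on $X$ outside a pluripolar set; they agree with the original weights on the plurifine-open set $E_t$ where none of the weights has yet been clamped; and they decrease to the originals as $t\to\infty$. Since $\phi^t-\psi^t\ge 0$ is now globally bounded, a global integration by parts on the compact $X$ is available for the truncated product and produces a nonnegative quantity. Restricting to $E_t$ via the plurifine locality of the Bedford--Taylor operator and letting $t\to\infty$, the contribution carried off $E_t$ is concentrated on a pluripolar set on which the non-pluripolar product vanishes, and the inequality follows; this is precisely the content of the integration-by-parts machinery of~\cite{BEGZ}.

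For the upper bound, applying the monotonicity with each $\phi_j$ compared to a common minimally singular weight $\phi_{\min}$ (which is less singular than every psh weight on $L$) yields
\[
\int_\Omega\bigwedge_{j=1}^n dd^c\phi_j\le\int_\Omega(dd^c\phi_{\min})^n,
\]
and one concludes with the identity $\int_\Omega(dd^c\phi_{\min})^n=\vol(L)$, which is the characterisation of the volume of a big line bundle as the total Monge--Amp\`ere mass of any minimally singular weight, established via Fujita approximation in~\cite{BEGZ}. Applying the monotonicity in both directions (using that any two minimally singular weights differ by a bounded function) then immediately gives the equality statement when the $\phi_j$'s themselves have minimal singularities.

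I expect the main obstacle to lie in the one-variable monotonicity in the non-compact setting $\Omega$: a naive Stokes' theorem produces boundary contributions at $\partial\Omega$ which are not manifestly controlled, and the technical core of the argument consists in absorbing these contributions into the pluripolar set $X\setminus\Omega$ through the truncation $t\to\infty$. Once this is in place, the telescoping reduction and the comparison with minimally singular weights make both the upper bound and the equality case automatic.
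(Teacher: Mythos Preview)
The paper does not actually prove Theorem~\ref{thm:mass}; it is quoted from~\cite{BEGZ} with the remark ``The following result is proved in~\cite{BEGZ}'', so there is no in-paper argument to compare against. Your sketch is in the spirit of the proof in~\cite{BEGZ} and correctly isolates the main ingredients: truncation by $\max(\cdot,\phi_{\min}-t)$, plurifine locality of Bedford--Taylor products, and the identification of the mass of a minimally singular weight with $\vol(L)$.

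Two points are worth sharpening. First, the telescoping reduction to a single factor is unnecessary and mildly awkward: the currents $T_k$ mix $\phi$'s and $\psi$'s of possibly different singularity types, so the ``single-variable'' step is not really simpler than the full statement. In~\cite{BEGZ} all $n$ factors are truncated simultaneously. Second, the phrase ``integration by parts \dots\ produces a nonnegative quantity'' misidentifies the mechanism. For the truncated weights $\psi_j^t:=\max(\psi_j,\phi_j-t)$, which have the \emph{same} singularity type as the $\phi_j$, one gets an \emph{equality} of total masses,
\[
\int_X\langle dd^c\psi_1^t\wedge\cdots\wedge dd^c\psi_n^t\rangle=\int_X\langle dd^c\phi_1\wedge\cdots\wedge dd^c\phi_n\rangle,
\]
since both compute the same cohomological quantity (this is where the nontrivial work in~\cite{BEGZ} lies). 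The inequality then arises because on the plurifine-open set $E_t=\bigcap_j\{\psi_j>\phi_j-t\}$ the truncated and original products coincide, so
\[
\int_{E_t}\langle dd^c\psi_1\wedge\cdots\wedge dd^c\psi_n\rangle\le\int_X\langle dd^c\psi_1^t\wedge\cdots\wedge dd^c\psi_n^t\rangle,
\]
and one lets $t\to\infty$. So the loss of mass is not a boundary term absorbed into $X\setminus\Omega$ via Stokes, but rather the discrepancy between $\int_{E_t}$ and $\int_X$ for the truncated product. Your handling of the upper bound and the equality case is correct.
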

This says in particular that $dd^c\phi_1\wedge...\wedge dd^c\phi_n$ has finite total mass, and we can thus introduce:

\begin{defi} If $\phi_1,...,\phi_n$ are psh weights on $L$ that are locally bounded on a Zariski open subset, the non-pluripolar product 
$$\langle dd^c\phi_1\wedge...\wedge dd^c\phi_n\rangle$$ is defined as the trivial extension to $X$ of the positive measure $dd^{c}\phi_{1}\wedge...\wedge dd^{c}\phi_{n}$ on $\Omega$. In particular, the Monge-Amp\`ere measure of a psh weight $\phi$ locally bounded on a Zariski open subset $\Omega$ is defined by
$$\MA(\phi):=\langle(dd^c\phi)^n\rangle.$$
\end{defi} 
We stress that such non-pluripolar products $\langle dd^c\phi_1\wedge...\wedge dd^c\phi_n\rangle$ put no mass on pluripolar subsets of $X$, and therefore do not depend on the choice of $\Omega$. By Theorem~\ref{thm:mass}, the total mass
$$\int_X\langle dd^c\phi_1\wedge...\wedge dd^c\phi_n\rangle$$
only depends on the singularity classes of the $\phi_j$'s and is equal to $\vol(L)$ when the $\phi_j$'s have minimal singularities. 

The non-pluripolar Monge-Amp\`ere operator so defined satisfies the following generalised comparison principle, which will be a crucial ingredient in the proof of Theorem B. 

\begin{cor}\label{cor:comparison} Let $\phi_1$ and $\phi_2$ be two psh weights on $L$ that are locally bounded on a Zariski open subset. If $\phi_1\le\phi_2+O(1)$, then we have
$$\int_{\{\phi_2<\phi_1\}}\MA(\phi_1)\le\int_{\{\phi_2<\phi_1\}}\MA(\phi_2).$$
\end{cor}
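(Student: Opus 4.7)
The plan is to deduce the inequality from the mass-invariance part of Theorem~\ref{thm:mass} via the envelope $u:=\max(\phi_{1},\phi_{2})$, and then to remove a residual boundary term by an $\varepsilon$-shift.

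First I would observe that $u$ is a psh weight on $L$ locally bounded on the Zariski open subset $\Omega$ of common local boundedness, and that the hypothesis $\phi_{1}\leq\phi_{2}+O(1)$ gives $\phi_{2}\leq u\leq\phi_{2}+C$ for some constant $C$. So $u$ and $\phi_{2}$ lie in the same singularity class, and Theorem~\ref{thm:mass} yields $\int_{X}\MA(u)=\int_{X}\MA(\phi_{2})$. On the plurifine open set $\{\phi_{1}>\phi_{2}\}\cap\Omega$, $u$ coincides with $\phi_{1}$, so by the locality of the Bedford--Taylor product on plurifine opens---which carries over to the non-pluripolar extension since neither measure charges pluripolar sets---we have $\MA(u)=\MA(\phi_{1})$ there, and similarly $\MA(u)=\MA(\phi_{2})$ on $\{\phi_{2}>\phi_{1}\}\cap\Omega$. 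Since the complement of $\{\phi_{1}>\phi_{2}\}\cup\{\phi_{2}>\phi_{1}\}$ differs from $\{\phi_{1}=\phi_{2}\}$ by a pluripolar set (on which $\MA$ puts no mass), splitting $\int_{X}\MA(u)$ across these two plurifine opens and comparing with $\int_{X}\MA(\phi_{2})$ yields the \emph{weak} form
\[\int_{\{\phi_{1}>\phi_{2}\}}\MA(\phi_{1})\leq\int_{\{\phi_{1}\geq\phi_{2}\}}\MA(\phi_{2}).\]

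To remove the contribution of $\{\phi_{1}=\phi_{2}\}$ on the right, I would then apply this weak form to the pair $(\phi_{1}-\varepsilon,\phi_{2})$, which still satisfies $\phi_{1}-\varepsilon\leq\phi_{2}+O(1)$, obtaining
\[\int_{\{\phi_{1}>\phi_{2}+\varepsilon\}}\MA(\phi_{1})\leq\int_{\{\phi_{1}\geq\phi_{2}+\varepsilon\}}\MA(\phi_{2})\leq\int_{\{\phi_{1}>\phi_{2}\}}\MA(\phi_{2}),\]
and then let $\varepsilon\searrow 0$: the sets $\{\phi_{1}>\phi_{2}+\varepsilon\}$ increase to $\{\phi_{1}>\phi_{2}\}$, so monotone convergence delivers the stated comparison principle.

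The main technical subtlety will be justifying the plurifine-locality step for the non-pluripolar Monge--Amp\`ere product in the big setting, for which I would invoke the corresponding statement from~\cite{BEGZ}. Apart from that, everything reduces to routine manipulations with maxima and $\varepsilon$-shifts.
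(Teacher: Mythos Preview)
Your proof is correct and follows essentially the same approach as the paper: both arguments use the envelope $\max(\phi_1,\phi_2)$ (resp.\ $\max(\phi_2,\phi_1-\varepsilon)$), plurifine locality of the non-pluripolar Monge--Amp\`ere operator, the mass equality from Theorem~\ref{thm:mass}, and an $\varepsilon$-shift combined with monotone convergence. The only difference is organizational: the paper builds the $\varepsilon$-shift into the max from the outset, whereas you first derive the weak inequality $\int_{\{\phi_1>\phi_2\}}\MA(\phi_1)\le\int_{\{\phi_1\ge\phi_2\}}\MA(\phi_2)$ and then shift by $\varepsilon$ to strip the contact set from the right-hand side---a cosmetic reordering of the same ingredients.
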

\begin{proof} It is an important result of Bedford-Taylor~\cite{BT87} that $u\mapsto(dd^c u)^n$ is local in the plurifine topology for locally bounded psh functions $u$. By definition of the non-pluripolar Monge-Amp\`ere operator, it follows that $\phi\mapsto\MA(\phi)$ defined above is also local in the plurifine topology (cf.~\cite{BEGZ}). Now let $\e>0$. The psh weight $\max(\phi_2,\phi_1-\e)$ coincides with $\phi_2$ on the plurifine open subset
$\{\phi_2>\phi_1-\e\}$ and with $\phi_1-\e$ on the plurifine open subset
$\{\phi_2<\phi_1-\e\}.$ 
It follows that
$$\int_X\MA(\max(\phi_2,\phi_1-\e))$$
$$\ge\int_{\{\phi_2>\phi_1-\e\}}\MA(\phi_2)+\int_{\{\phi_2<\phi_1-\e\}}\MA(\phi_1)$$
which is in turn
$$\ge\int_X\MA(\phi_2)-\int_{\{\phi_2<\phi_1\}}\MA(\phi_2)+\int_{\{\phi_2<\phi_1-\e\}}\MA(\phi_1).$$
On the other hand Theorem~\ref{thm:mass} yields
$$\int_X\MA(\phi_2)=\int_X\MA(\max(\phi_2,\phi_1-\e))$$ since $\phi_1\le\phi_2+O(1)$ implies
$$\max(\phi_2,\phi_1-\e)=\phi_2+O(1),$$ and the result now follows by monotone convergence by letting $\e\to 0$.
\end{proof}
We infer the following domination principle (cf.~\cite{BEGZ}):
\begin{cor}\label{cor:domination} Let $\phi_1$ and $\phi_2$ be two psh weights on $L$ and suppose that $\phi_2$ has minimal singularities. If $\phi_1\le\phi_2$ holds a.e. wrt $\MA(\phi_2)$, then $\phi_1\le\phi_2$ everywhere on $X$. 
\end{cor}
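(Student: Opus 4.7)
The plan is to combine the comparison principle (Corollary~\ref{cor:comparison}) with a strictly psh perturbation of $\phi_1$ in order to upgrade the $\MA(\phi_2)$-a.e.\ inequality to an everywhere inequality. A preliminary step would reduce to the case where $\phi_1$ itself has minimal singularities: for $M>0$, the replacement $\phi_1^M:=\max(\phi_1,\phi_2-M)$ is a psh weight with minimal singularities dominating $\phi_1$, and one checks that $\{\phi_1^M>\phi_2\}=\{\phi_1>\phi_2\}$, so the hypothesis is preserved and the conclusion for $\phi_1^M$ implies it for $\phi_1$.

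Since $L$ is big, Bonavero/Ji--Shiffman combined with Demailly regularisation provides a psh weight $\phi_s$ on $L$ with analytic singularities and $dd^c\phi_s\ge\e_0\,\omega$ for some K\"ahler form $\omega$; it is locally bounded on a Zariski open subset $\Omega'\subset\Omega$. For $t\in(0,1)$, consider $\phi_1^t:=(1-t)\phi_1+t\phi_s$, a psh weight locally bounded on $\Omega'$ with $dd^c\phi_1^t\ge t\e_0\,\omega$ there. The identity $\phi_1^t-\phi_2=(1-t)(\phi_1-\phi_2)+t(\phi_s-\phi_2)$, together with the fact that $\phi_2$ having minimal singularities gives some $C>0$ with $\phi_s\le\phi_2+C$, shows that the hypothesis yields the inclusion $\{\phi_2<\phi_1^t-tC\}\subset\{\phi_1>\phi_2\}$ up to an $\MA(\phi_2)$-null set. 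Applying Corollary~\ref{cor:comparison} to $\phi_1^t-tC$ and $\phi_2$ then forces
$$
\int_{\{\phi_2<\phi_1^t-tC\}}\MA(\phi_1^t)\le\int_{\{\phi_2<\phi_1^t-tC\}}\MA(\phi_2)=0,
$$
while the curvature bound together with multilinearity and positivity of Bedford--Taylor products gives $\MA(\phi_1^t)\ge(t\e_0)^n\omega^n$ on $\Omega'$. Hence $\{\phi_2<\phi_1^t-tC\}\cap\Omega'$ has Lebesgue measure zero.

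Because psh functions satisfy the sub-mean value property, a Lebesgue-a.e.\ pointwise inequality between psh functions on $\Omega'$ propagates to everywhere. Thus $\phi_1^t-tC\le\phi_2$ pointwise on $\Omega'$, and letting $t\to 0$ yields $\phi_1\le\phi_2$ on $\Omega'$. Since $\Omega'$ has empty-interior complement in $X$ and $\phi_1,\phi_2$ are usc-regularised, this extends to every $x\in X$ via
$$
\phi_1(x)=\limsup_{y\to x,\,y\in\Omega'}\phi_1(y)\le\limsup_{y\to x,\,y\in\Omega'}\phi_2(y)\le\phi_2(x),
$$
completing the argument.

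The main obstacle I anticipate is establishing the strict positivity $\MA(\phi_1^t)\ge(t\e_0)^n\omega^n$ on $\Omega'$: writing $dd^c\phi_1^t=T+t\e_0\,\omega$ with $T\ge0$ and expanding $(T+t\e_0\,\omega)^n$ requires the mixed Bedford--Taylor wedge products $T^k\wedge\omega^{n-k}$ to be defined as nonnegative measures on $\Omega'$, which depends on $T$ (equivalently $\phi_1^t$) admitting locally bounded potentials there. This is precisely why the preliminary reduction to $\phi_1$ with minimal singularities is essential: without it, $\phi_1^t$ might fail to be locally bounded on any Zariski open subset and Corollary~\ref{cor:comparison} could not be invoked.
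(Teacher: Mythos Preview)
The paper does not actually give a proof of this corollary: it simply writes ``We infer the following domination principle (cf.~\cite{BEGZ})'' and states the result, deferring the argument to~\cite{BEGZ}. Your proof is correct and is precisely the standard argument one finds there: reduce to $\phi_1$ with minimal singularities, perturb by a small multiple of a strictly psh weight with analytic singularities so that the Monge--Amp\`ere measure of the perturbed weight dominates a smooth volume form on a Zariski open set, apply the comparison principle (Corollary~\ref{cor:comparison}) to conclude that the ``bad'' set has Lebesgue measure zero, and then use the standard fact that an a.e.\ inequality between psh functions holds everywhere. The concern you flag at the end is well placed and correctly resolved by your preliminary reduction step; the whole argument is sound.
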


The following continuity result is proved in~\cite{BEGZ}. 
\begin{thm}
\label{thm:continuous} Let $\psi_{0}$ be a fixed psh weight with
minimal singularities on $L$. Then the measure-valued operators 
$$(\phi_1,...,\phi_{n})\mapsto\langle dd^c\phi_{1}\wedge...\wedge dd^c\phi_{n}\rangle$$
and 
$$(\phi_{0},...,\phi_{n})\mapsto(\phi_{0}-\psi_{0})\langle dd^c\phi_{1}\wedge...\wedge\phi_{n}\rangle$$
are continuous along convergent sequence $\phi_j^{(k)}\to\phi_j$ of psh weights with minimal singularities in the following three cases:
\begin{itemize} 
\item $\phi_j^{(k)}$ decreases pointwise to $\phi_j$.
\item $\phi_j^{(k)}$ increases to $\phi_j$ a.e. wrt Lebesgue measure.
\item $\phi_j^{(k)}$ converges to $\phi_j$ uniformly on $X$.   
\end{itemize}\end{thm}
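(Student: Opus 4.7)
The plan is to reduce each of the three convergence modes to the classical Bedford-Taylor continuity results for \emph{locally bounded} psh functions applied on the Zariski open set $\Omega$ of Section~\ref{sec:big} (on which every psh weight with minimal singularities is locally bounded), and then to globalize from $\Omega$ to $X$ by invoking the mass-equality statement of Theorem~\ref{thm:mass}.

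For the first operator, the restrictions of the non-pluripolar products to $\Omega$ coincide with genuine Bedford-Taylor wedge products, so classical continuity in each of the three modes gives weak convergence
$$\langle dd^c\phi_1^{(k)}\wedge\cdots\wedge dd^c\phi_n^{(k)}\rangle\big|_\Omega\longrightarrow\langle dd^c\phi_1\wedge\cdots\wedge dd^c\phi_n\rangle\big|_\Omega$$
as Radon measures on $\Omega$. Since all the weights involved have minimal singularities, Theorem~\ref{thm:mass} forces both the approximating and the limit measures to have total mass $\vol(L)$ on $X$; combined with the fact that non-pluripolar products put no mass on the pluripolar set $X\setminus\Omega$, this lets one promote weak convergence on $\Omega$ to weak convergence on $X$ by a standard cutoff argument: given $f\in C(X)$ and $\e>0$, one picks a compact $K\subset\Omega$ on which the limit measure captures all but $\e$ of its mass, applies the local convergence on a neighborhood of $K$, and bounds the tail by $\|f\|_\infty$ times the complementary mass, which is asymptotically small by mass conservation.

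For the second operator, write $u_k:=\phi_0^{(k)}-\psi_0$ and $u:=\phi_0-\psi_0$ (both uniformly bounded on $X$) and denote by $\mu_k,\mu$ the measures just treated. One must show $\int f u_k\,d\mu_k\to\int f u\,d\mu$ for every $f\in C(X)$, which one splits as $\int f(u_k-u)\,d\mu_k+\int f u\,d(\mu_k-\mu)$. In the decreasing case the usc functions $u_k-u\ge 0$ decrease pointwise to $0$ on the compact $X$, hence converge uniformly by a Dini-type argument for usc functions (the closed sublevel sets $\{u_k-u\ge\e\}$ decrease to $\emptyset$), so the first term goes to zero by the uniform mass bound $\mu_k(X)=\vol(L)$. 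In the a.e.-increasing and uniform modes one uses plurifine quasi-continuity of psh weights to approximate $u_k-u$ uniformly off a set of arbitrarily small capacity, on which the $\mu_k$'s put uniformly small mass since non-pluripolar measures of minimal-singularity weights are controlled uniformly by capacity. The second term is handled analogously, by approximating the bounded quasi-continuous function $u$ by a continuous one off a set of small capacity and then invoking the weak convergence $\mu_k\to\mu$ from the first operator.

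The main obstacle is the globalization from $\Omega$ to $X$: weak convergence on an open set does not by itself rule out mass concentrating on or escaping to the polar complement $X\setminus\Omega$, and it is precisely the rigidity supplied by the total-mass equality within the minimal-singularity class, Theorem~\ref{thm:mass}, that prevents this pathology. Once this no-escape-of-mass principle is in place, both assertions reduce to the standard combination of Bedford-Taylor's local theory with the plurifine quasi-continuity of psh functions.
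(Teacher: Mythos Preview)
Your treatment of the first operator matches the paper's: local Bedford--Taylor continuity on $\Omega$, then globalization via the constant-mass statement of Theorem~\ref{thm:mass}. That part is fine.

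For the second operator, however, your decreasing-case argument has a genuine gap. You assert that $u_k-u=\phi_0^{(k)}-\phi_0$ is usc and then run a Dini argument on the sets $\{u_k-u\ge\e\}$. But $\phi_0^{(k)}$ and $\phi_0$ are each merely usc (being psh), so their difference is usc plus lsc and is in general neither; nothing in the hypotheses forces $\phi_0$ to be continuous. Hence the superlevel sets $\{u_k-u\ge\e\}$ need not be closed, and the nested-compact-sets argument collapses. Your quasi-continuity route for the remaining term $\int fu\,d(\mu_k-\mu)$ and for the increasing mode is more defensible, but it tacitly assumes a \emph{uniform} domination of the measures $\mu_k$ by Monge--Amp\`ere capacity in the big setting; this is true but is itself a nontrivial input from~\cite{BEGZ}, so the detour buys nothing.

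The paper's route (and that of~\cite{BEGZ}) is both simpler and avoids the Dini issue: Bedford--Taylor's local theory already gives continuity of the full product
\[
(\phi_0-\psi_0)\,dd^c\phi_1\wedge\cdots\wedge dd^c\phi_n
\]
along monotone or uniform sequences of \emph{locally bounded} psh functions on $\Omega$, without any splitting into two terms. One then globalizes exactly as you did for the first operator: the coefficients $\phi_0^{(k)}-\psi_0$ are \emph{uniformly} bounded in $k$ (by monotonicity plus minimal singularities), so the tail over $X\setminus K$ is controlled by $C\,\mu_k(X\setminus K)$, which you have already shown is uniformly small from mass conservation. In short, treat the second operator on $\Omega$ as a single Bedford--Taylor object rather than decomposing it, and the faulty Dini step disappears.
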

For the first operator considered, this is in fact straightforward: convergence holds locally
on the Zariski open subset $\Omega$ where weights are locally bounded
by Bedford-Taylor's results, and it extends across the boundary of $\Omega$
because the total mass is constant by Theorem~\ref{thm:mass}. The case of the second operator then follows quite easily. 

The following integration-by-parts formula is more difficult to establish. Its proof, given in~\cite{BEGZ}, is an elaboration of the Skoda-El Mir extension theorem.  

\begin{thm}
\label{thm:stokes} Let $u$ and $v$ be two bounded functions on $X$, each being a differences of quasi-psh functions that are locally bounded on a given Zariski open subset $\Omega$. Let also $\Theta$ be a closed positive
current of bidimension $(1,1)$ on $X$. Then we have 
$$\int_{\Omega}u\,dd^{c}v\wedge\Theta=\int_{\Omega}v\,dd^{c}u\wedge\Theta=-\int_\Omega dv\wedge d^cu\wedge\Theta.$$
\end{thm}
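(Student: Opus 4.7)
The plan is to reduce to the standard Bedford-Taylor integration by parts for bounded quasi-psh functions on the compact manifold $X$, via a truncation that handles the blow-up of the quasi-psh pieces along the pluripolar set $X\setminus\Omega$.

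Write $u=u_1-u_2$ and $v=v_1-v_2$ with the $u_i,v_i$ quasi-psh on $X$, locally bounded on $\Omega$, and (after adding constants) $\le 0$. Pick a smooth real $(1,1)$-form $\theta$ large enough that $dd^c u_i+\theta$ and $dd^c v_i+\theta$ are closed positive currents on $X$. By Bedford-Taylor's local theory, each of these wedged with $\Theta$ defines a positive measure on $\Omega$ whose total mass on $X$ is controlled via Theorem~\ref{thm:mass}; this makes $dd^c v\wedge\Theta$, $dd^c u\wedge\Theta$, and $dv\wedge d^c u\wedge\Theta$ well-defined signed measures of finite total variation on $\Omega$, and the three integrals in the statement converge absolutely since $u,v\in L^\infty(X)$.

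Now I truncate: put $u_i^k:=\max(u_i,-k)$, $v_i^k:=\max(v_i,-k)$, and $u^k:=u_1^k-u_2^k$, $v^k:=v_1^k-v_2^k$. These are bounded quasi-psh functions on $X$, with $u_i^k\searrow u_i$ and $v_i^k\searrow v_i$, and they agree with $u$, $v$ respectively on any compact subset of $\Omega$ for $k$ large. Because $X$ is compact without boundary and the $u_i^k,v_i^k$ are bounded quasi-psh, the classical Bedford-Taylor integration by parts against the closed positive current $\Theta$ applies and, assembled by bilinearity, gives
$$\int_X u^k\,dd^c v^k\wedge\Theta=\int_X v^k\,dd^c u^k\wedge\Theta=-\int_X dv^k\wedge d^c u^k\wedge\Theta.$$

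Passing to the limit $k\to\infty$ in the first two integrals is the easy half: Theorem~\ref{thm:continuous} gives weak convergence of the positive measures $(dd^c v_i^k+\theta)\wedge\Theta$ to the non-pluripolar products $(dd^c v_i+\theta)\wedge\Theta$ on $\Omega$, Skoda-El Mir-type extension shows that no mass accumulates on the pluripolar set $X\setminus\Omega$ in the limit, and with $u^k$ uniformly bounded and $u^k\to u$ pointwise on $\Omega$, dominated convergence does the job. The main obstacle lies in the gradient integral $\int_X dv^k\wedge d^c u^k\wedge\Theta$, which is not a signed measure with obvious bounds. I would control it by the Cauchy-Schwarz-type inequality
$$\Big|\int dv^k\wedge d^c u^k\wedge\Theta\Big|^2\le\Big(\int du^k\wedge d^c u^k\wedge\Theta\Big)\Big(\int dv^k\wedge d^c v^k\wedge\Theta\Big),$$
each factor being bounded independently of $k$ by a single Chern-Levine-Nirenberg integration by parts in terms of $\|u^k\|_\infty$, $\|v^k\|_\infty$ and the mass of $\Theta$. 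Combined with a cut-off vanishing near $X\setminus\Omega$, the Skoda-El Mir extension theorem, exactly as in \cite{BEGZ}, ensures that the boundary contribution produced by the differential of the cut-off tends to zero, which forces the gradient integral to converge to $-\int_\Omega dv\wedge d^c u\wedge\Theta$ and completes the identification of all three integrals.
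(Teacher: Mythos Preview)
The paper does not prove this theorem; it simply refers to \cite{BEGZ} and notes that the argument there is ``an elaboration of the Skoda--El Mir extension theorem.'' Your overall skeleton---truncate the quasi-psh pieces at level $-k$, apply the classical Bedford--Taylor integration by parts on the compact manifold $X$ to the bounded functions $u^k,v^k$, then let $k\to\infty$---is indeed the strategy of \cite{BEGZ}, so in that sense you are on the same track.

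There is, however, a genuine gap at the crucial step. You assert that ``Skoda--El Mir-type extension shows that no mass accumulates on the pluripolar set $X\setminus\Omega$ in the limit'' for the positive measures $(dd^c v_i^k+\theta)\wedge\Theta$. This is false in general: the total mass of $(dd^c v_i^k+\theta)\wedge\Theta$ on $X$ is fixed cohomologically, while on $\Omega$ it converges to the non-pluripolar product $(dd^c v_i+\theta)\wedge\Theta$, whose mass is typically strictly smaller when $v_i$ is genuinely singular along $X\setminus\Omega$; the defect concentrates on $X\setminus\Omega$. Skoda--El Mir tells you that trivial extensions of closed positive currents across complete pluripolar sets remain closed---it says nothing about mass not escaping in a limit. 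What must actually be shown is that, because $v_1-v_2$ is \emph{bounded}, these escaping masses cancel when you form $dd^c v^k\wedge\Theta$ and test against the uniformly bounded $u^k$. Establishing that cancellation is exactly the work done in \cite{BEGZ} via psh cut-offs with poles along $X\setminus\Omega$; your Cauchy--Schwarz estimate only yields uniform boundedness of the gradient term, not the smallness near $X\setminus\Omega$ that you need, and your last sentence simply hands the problem back to \cite{BEGZ}. A secondary issue: Theorems~\ref{thm:continuous} and~\ref{thm:mass} as stated in the paper concern psh weights with minimal singularities on the big line bundle $L$, not arbitrary quasi-psh functions wedged with an arbitrary closed positive current $\Theta$, so invoking them here is not directly legitimate (though the local Bedford--Taylor statements you actually need on $\Omega$ are of course available).
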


\subsection{Equilibrium weights}
Let $X$ be a compact complex manifold and $L$ be a big line bundle. Given a weighted subset $(K,\phi)$, we set 
\begin{equation}\phi_K=\sup\left\{ \psi,\,\psi\,\textrm{psh weight on$\, L$},\,\psi\leq\phi\,\,\textrm{on$\, K$}\right\},\label{eq:extem metric}\end{equation}
so that the definition (\ref{equ:equi_weight}) of the equilibrium weight $P_K\phi$ can be reformulated as
$$P_K\phi=\phi_K^*.$$
In case $K=X$ the inequality $\phi_X\le\phi$ on $X$ implies $P_X\phi\le\phi$ by continuity of $\phi$, and this means that $P_X\phi=\phi_X$ is already upper semi-continuous in that case. This property however fails for more general weighted subsets. Extending the classical terminology, a weighted subset $(K,\phi)$ will be called \emph{regular} if $\phi_K$ is usc, i.e~if $P_K\phi\le\phi$ holds on $K$. 

By Choquet's lemma (cf.~\cite{Kli} p.~38) there exists an increasing sequence of psh weights $\psi_j$ such that $\psi_j\le\phi$ on $K$ and $\lim_{j\to\infty}\psi_j=P_K\phi$ a.e. on $X$ wrt Lebesgue measure, and we can furthermore assume that the $\psi_j$ have minimal singularities by replacing them by $\max(\psi_j,\tau)$ where $\tau$ is a psh weight with minimal singularities such that $\tau\le\phi$ on $K$. 

The following `tautological maximum principle'
is a mere reformulation of the definition of $\phi_{K}$.

\begin{prop}
\label{prop:max}(Maximum principle) Let $(K,\phi)$ be weighted subset.
Then for every psh weight $\psi$ on $L$ we have 
$$\sup_{K}(\psi-\phi)=\sup_{X}(\psi-\phi_{K})$$
 In particular 
$$\Vert s\Vert_{L^{\infty}(K,k\phi)}=\Vert s\Vert_{L^{\infty}(X,k\phi_K)}$$
 for every section $s\in H^{0}(kL).$
\end{prop}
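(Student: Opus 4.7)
The plan is to exploit the fact that the defining condition $\psi \le \phi$ on $K$ is preserved under translation by a constant, which immediately gives one inequality, while the other follows from the trivial observation that $\phi_K \le \phi$ on $K$.

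First I would prove the inequality $\sup_X(\psi - \phi_K) \le \sup_K(\psi - \phi)$. Since $\psi$ is upper semi-continuous and $\phi$ is continuous, the constant $c := \sup_K(\psi - \phi)$ is finite (note $K$ compact). Then $\psi - c$ is itself a psh weight on $L$ (adding a constant preserves log-homogeneity and plurisubharmonicity), and by construction $\psi - c \le \phi$ on $K$. Thus $\psi - c$ appears among the weights whose pointwise supremum defines $\phi_K$, so $\psi - c \le \phi_K$ everywhere on $X$, which rearranges to $\sup_X(\psi - \phi_K) \le c$.

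For the reverse inequality, I would first observe that $\phi_K \le \phi$ pointwise on $K$: indeed, for every candidate $\psi'$ in the defining supremum one has $\psi'(x) \le \phi(x)$ at each $x \in K$, so taking the sup preserves this inequality (no regularisation is involved at this stage since we are working with $\phi_K$, not $P_K\phi$). Consequently, for any $x \in K$,
\[
\psi(x) - \phi(x) \le \psi(x) - \phi_K(x) \le \sup_X(\psi - \phi_K),
\]
and taking the supremum over $x \in K$ gives the desired bound.

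For the second assertion, I would apply the identity just established to the psh weight $\psi := \tfrac{1}{k}\log|s|$ associated to a section $s \in H^0(kL)$ (this is a psh weight on $L$ by the Lelong-Poincar\'e formula, since $dd^c \log|s|$ is the integration current on the zero divisor of $s$). Multiplying the equality $\sup_K(\psi - \phi) = \sup_X(\psi - \phi_K)$ by $k$ and exponentiating converts the left-hand side into $\sup_K |s| e^{-k\phi} = \|s\|_{L^\infty(K,k\phi)}$ and the right-hand side into $\sup_X |s| e^{-k\phi_K} = \|s\|_{L^\infty(X,k\phi_K)}$, yielding the claim. There is no genuine obstacle here; the only point to watch is that one uses $\phi_K$ (the pointwise sup) rather than its usc regularisation $P_K\phi$, so no subtlety about negligible sets or Choquet's lemma intervenes.
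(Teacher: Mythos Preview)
Your proof is correct and is precisely the natural unpacking of the definition that the paper has in mind; indeed the paper does not give a proof at all, simply calling the proposition a ``tautological maximum principle'' that is ``a mere reformulation of the definition of $\phi_K$''. Your argument is the canonical way to make this reformulation explicit.
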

Note however that this fails with $P_K\phi=\phi_K^*$ in place of $\phi_K$ when $(K,\phi)$ is not regular. Equilibrium weights behave nicely under pull-back:

\begin{prop}\label{prop:pullback} 
Let $\pi:Y\to X$ be a surjective morphism between two compact complex manifolds of same dimension $n$, and let $L$ be a big line bundle on $X$ (so that $\pi^*L$ is also big). Let $(K,\phi)$ be a weighted subset of $(X,L)$, and consider the induced weighted subset $(\pi^{-1}K,\pi^*\phi)$ of $(Y,\pi^*L)$. Then their respective equilibrium weights are related by
$$P_{\pi^{-1}K}\pi^{*}\phi=\pi^*P_K\phi.$$
\end{prop}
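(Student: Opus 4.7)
The plan is to prove the two inequalities separately. The easy direction $\pi^* P_K\phi \le P_{\pi^{-1}K}\pi^*\phi$ proceeds via Choquet's lemma: pick an increasing sequence of psh weights $\psi_j$ with $\psi_j \le \phi$ on $K$ and $\psi_j \nearrow P_K\phi$ almost everywhere. Each pullback $\pi^*\psi_j$ is a psh weight on $\pi^*L$ (pullback of psh by a holomorphic map is psh) satisfying $\pi^*\psi_j \le \pi^*\phi$ on $\pi^{-1}K$, so it is a candidate in the envelope defining $P_{\pi^{-1}K}\pi^*\phi$. Passing to the monotone limit and using that the pluripolar exceptional set in $X$ pulls back to a pluripolar set in $Y$ (since $\pi$ is generically finite) yields $\pi^*P_K\phi \le P_{\pi^{-1}K}\pi^*\phi$ outside a pluripolar set, and then everywhere since both sides are upper semicontinuous.

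For the reverse inclusion the key tool is a pushforward construction for psh weights along $\pi$. Since $\pi$ is surjective of equal dimension, it is generically finite of some degree $d$; let $B \subset X$ be its (proper analytic) branch locus. Given any candidate $\tilde\psi$ in the envelope defining $P_{\pi^{-1}K}\pi^*\phi$, define on $X\setminus B$ the function
$$\psi(x) := \max_{y \in \pi^{-1}(x)} \tilde\psi(y).$$
Locally on $X \setminus B$ the map $\pi$ admits $d$ holomorphic sections $\sigma_1,\ldots,\sigma_d$ and $\psi = \max_j \tilde\psi \circ \sigma_j$ is a maximum of finitely many psh functions, hence psh and locally bounded above. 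Because $B$ is pluripolar, the classical removable-singularity theorem produces a unique psh weight on $L$ over all of $X$, still denoted by $\psi$. By construction $\pi^*\psi(y) \ge \tilde\psi(y)$ holds for $y$ in the dense open set $Y \setminus \pi^{-1}B$, and then on all of $Y$ by upper semicontinuity of both sides. Moreover, for $x \in K \setminus B$ the fiber $\pi^{-1}(x)$ is contained in $\pi^{-1}K$, whence $\psi(x) \le \phi(x)$; thus $\psi \le \phi$ holds on the non-pluripolar set $K \setminus B$.

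To upgrade this to all of $K$ I would use the bigness of $L$ on the Moishezon manifold $X$ to produce a psh weight $\rho$ on $L$ whose $-\infty$-locus contains the analytic set $B$: combine a strictly psh weight on $L$ (afforded by bigness, via the Ji--Shiffmann / Bonavero theorem cited in Section~\ref{sec:big}) with a small multiple of a quasi-psh function on $X$ equal to $-\infty$ on $B$. For each $\e\in(0,1)$ the weight
$$\psi_\e := (1-\e)\psi + \e(\rho - C_\e),$$
with $C_\e$ chosen so that $\psi_\e \le \phi$ on $K \setminus B$, is a psh weight on $L$ equal to $-\infty$ on $K \cap B$; hence $\psi_\e \le \phi$ on all of $K$. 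Thus $\psi_\e$ is a candidate in the envelope defining $P_K\phi$, so $\psi_\e \le P_K\phi$. Pulling back and letting $\e \to 0$, which recovers $\psi$ outside a pluripolar set, gives $\pi^*\psi \le \pi^*P_K\phi$ almost everywhere and hence everywhere by upper semicontinuity. Since $\tilde\psi \le \pi^*\psi$, taking the supremum over candidates $\tilde\psi$ and usc-regularizing yields $P_{\pi^{-1}K}\pi^*\phi \le \pi^*P_K\phi$, which combined with the first inequality proves the claimed equality.

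The main obstacle I anticipate is the pushforward construction across the branch locus together with handling the pluripolar exceptional set $K \cap B$, where the naive inequality $\psi \le \phi$ may fail. The branch-locus extension relies on the standard removability theorem for psh functions locally bounded above, while the exceptional set is absorbed by the perturbation $\e\rho$, which forces $\psi_\e$ to $-\infty$ on $B$ at a cost that vanishes in the limit $\e \to 0$.
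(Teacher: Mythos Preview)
Your proof is correct and follows essentially the same strategy as the paper: the fiberwise-maximum pushforward $\psi(x)=\max_{y\in\pi^{-1}(x)}\tilde\psi(y)$ is the key construction in both arguments. The difference lies in how the extension across the branch locus $B$ is handled and, consequently, whether the $\e$-perturbation step is needed.

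The paper defines the fiberwise maximum on \emph{all} of $X$ at once (after subtracting a smooth reference weight to turn $\tilde\psi$ into a quasi-psh function $v$) and then checks the identity
\[
u(x)=\limsup_{x'\to x,\ x'\in U}u(x')
\]
for $x$ outside the set $U$ of regular values, using only that $v$ is usc and $\pi$ is proper. This shows that the fiberwise-max formula already gives a $\theta$-psh function on all of $X$, and the inequality $\tau\le\phi$ on the whole of $K$ (including $K\cap B$) follows immediately from the max formula applied pointwise. In your approach you extend across $B$ via the removable-singularity theorem for locally bounded-above psh functions, but then worry that the extension might exceed $\phi$ on $K\cap B$, and introduce the auxiliary weight $\rho$ with poles along $B$ and the perturbation $\psi_\e$ to force the inequality there. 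This detour is valid but unnecessary: the same properness/usc argument that the paper uses shows that your removable-singularity extension in fact \emph{coincides} with the fiberwise maximum at every point of $B$, so $\psi\le\phi$ already holds on all of $K$ without any perturbation.

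Your treatment of the easy inequality via Choquet's lemma is also heavier than needed; the paper dispatches it in one line, since any candidate $\psi\le\phi$ on $K$ pulls back to a candidate on $\pi^{-1}K$, giving $\pi^*\phi_K\le(\pi^*\phi)_{\pi^{-1}K}$ and hence (after usc regularization, using that $\pi$ is open) the desired inequality.
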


We stress that $\pi$ is not assumed to have connected fibres (in which case every psh weight $\psi$ on $\pi^*L$ is of the form $\psi=\pi^*\tau$ for some psh weight on $L$).  

\begin{proof} It is clear that $P_{\pi^{-1}K}\pi^{*}\phi\ge\pi^*P_K\phi$ by definition. In order to prove the converse inequality we argue as in the proof of \cite{BEGZ} Proposition 1.12. Let $\psi$ be a psh weight on $\pi^*L$ such that $\psi\le\pi^*\phi$ on $\pi^{-1}(K)$. Let $\phi_0$ be a fixed smooth weight on $L$ and set $v:=\psi-\pi^*\phi_0$, which is a $\pi^*\theta$-psh function on $Y$ with $\theta:=dd^c\phi_0$. Define a function $u$ on $X$ by
\begin{equation}\label{equ:max}u(x):=\max_{y\in\pi^{-1}(x)}v(y).
\end{equation}
We claim that $u$ is a $\theta$-psh function. Indeed it is standard to see that $u$ is a $\theta$-psh function on the Zariski open subset $U$ of regular values of $\pi$, and one then checks that
$$u(x)=\limsup_{y\to x,\,y\in U}u(y)$$
using the fact that $v$ is quasi-psh and $\pi$ is proper, which proves the claim. Now define $\tau:=\phi_0+u$. It is a psh weight on $L$ and it easily follows from (\ref{equ:max}) that $\tau\le\phi$ on $K$, thus $\tau\le P_K\phi$. As a consequence we get $\pi^*\tau\le\pi^*P_K\phi$. On the other hand we have $\psi\le\pi^*\tau$ by (\ref{equ:max}) thus we have proved that every psh weight $\psi$ on $L$ such that $\psi\le\pi^*\phi$ on $\pi^{-1}(K)$ satisfies $\psi\le\pi^*P_K\phi$, which means that $P_{\pi^{-1}(K)}\pi^*\phi\le\pi^*P_K\phi$ as desired. 
\end{proof}

Recall from (\ref{equ:equi_meas}) that the equilibrium measure of $(K,\phi)$ is defined by
$$\eq(K,\phi):=\vol(L)^{-1}\MA(P_K\phi).$$
It is a probability measure by Theorem~\ref{thm:mass}. 
\begin{prop}
\label{prop:support} If $(K,\phi)$ is a weighted subset, then
$\eq(K,\phi)$ is concentrated on $K$
and we have $P_K\phi=\phi$ on $K$ a.e.~with respect to this measure. 
\end{prop}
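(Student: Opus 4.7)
The plan is to deduce both claims from a single maximality statement: $\MA(P_K\phi)=0$ on the open set
$$U:=\{P_K\phi<\Phi\},\qquad\Phi:=\phi\text{ on }K,\;\Phi:=+\infty\text{ on }X\setminus K.$$
Continuity of $\phi$ on $X$ together with compactness of $K$ makes $\Phi$ lower semi-continuous, so $\Phi-P_K\phi$ is lower semi-continuous and $U$ is open; explicitly $U=(X\setminus K)\cup\{x\in K\,:\,P_K\phi(x)<\phi(x)\}$. Granting the maximality statement, and using that $\MA(P_K\phi)$ puts no mass on pluripolar sets (cf.~Section~\ref{sec:mixed}), one concludes that the measure is concentrated on $(X\setminus U)\cap\Omega\subset K\cap\{P_K\phi=\phi\}$, where $\Omega$ denotes the Zariski open set of local boundedness of $P_K\phi$. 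Both assertions of the proposition follow at once.

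The core step is thus to prove maximality of $P_K\phi$ on $U\cap\Omega$ in the Bedford-Taylor sense, via the classical envelope argument. Fix $x_0\in U\cap\Omega$ and choose a ball $\overline{B}\subset U\cap\Omega$ with $P_K\phi+\delta\leq\Phi$ on $\overline{B}$ for some $\delta>0$; this is possible by the lower semi-continuity of $\Phi-P_K\phi$. Suppose for contradiction that some psh function $\chi$ on $B$, continuous on $\overline{B}$, satisfies $\chi\leq P_K\phi$ on $\partial B$ but with $m:=\max_B(\chi-P_K\phi)>0$. Pick $t\in(0,1)$ with $tm<\delta$ and set $\chi_t:=(1-t)P_K\phi+t\chi$, a psh function on $B$ which still obeys $\chi_t\leq P_K\phi$ on $\partial B$ and $\chi_t\leq P_K\phi+\delta\leq\Phi$ on $\overline{B}$. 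Glue $\max(P_K\phi,\chi_t)$ on $B$ with $P_K\phi$ on $X\setminus B$ to produce a psh weight $\psi$ on $X$: it strictly exceeds $P_K\phi$ at the point realising $m$, and satisfies $\psi\leq\phi$ on $K$ off a pluripolar set, namely pointwise on $K\cap B$ by the above bound and on $K\setminus B$ off the pluripolar set $\{\phi_K^*>\phi_K\}$.

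To contradict the definition of $P_K\phi$ one needs to know that the envelope (\ref{eq:extem metric}) is unchanged under relaxing "$\psi\leq\phi$ on $K$" to "quasi-everywhere on $K$". This is standard: given $\psi$ psh with $\psi\leq\phi$ off a pluripolar set $E\subset K$, choose a negative psh function $u$ with $u=-\infty$ on $E$; then $\psi+\e u$ is pointwise admissible, hence dominated by $\phi_K\leq P_K\phi$, and letting $\e\to 0^+$ yields $\psi\leq P_K\phi$ off a pluripolar set. The familiar fact that $v\leq w$ off a pluripolar set implies $v\leq w$ everywhere whenever $v,w$ are psh (via the sub-mean-value inequality together with the Lebesgue-negligibility of pluripolar sets) then upgrades this to a pointwise inequality. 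Applied to our $\psi$ this contradicts $\psi>P_K\phi$ at the maximum point, so $P_K\phi$ is maximal on $B$; Bedford-Taylor's theorem then gives $(dd^c P_K\phi)^n=0$ on $B$, and covering $U\cap\Omega$ by such balls finishes the argument. I expect the main obstacle to be precisely this quasi-everywhere reformulation of the envelope, the only step where the distinction $P_K\phi=\phi_K^*$ versus $\phi_K$ plays an essential role; the rest is a routine transposition of the Bedford-Taylor envelope principle to the non-pluripolar Monge-Amp\`ere framework of Section~\ref{sec:mixed}.
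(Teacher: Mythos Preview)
Your approach differs from the paper's. The paper argues in two steps (balls in $X\setminus K$, then balls around points of $K$ where $P_K\phi<\phi$), producing via Choquet's lemma an increasing sequence $\psi_j$ with $\psi_j\le\phi$ \emph{pointwise} on $K$, modifying each $\psi_j$ on the ball by the Dirichlet solution, and concluding by continuity of $\MA$ along monotone sequences. You instead run a single balayage argument on $U=\{P_K\phi<\Phi\}$, building a competitor $\psi$ equal to $P_K\phi$ outside the ball; this $\psi$ only satisfies $\psi\le\phi$ on $K$ \emph{quasi-everywhere}, so you are forced through the q.e.\ reformulation of the envelope. Your route is more unified; the paper's avoids that reformulation entirely.

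There is a genuine gap in your q.e.\ step. You write ``choose a negative psh function $u$ with $u=-\infty$ on $E$'', but on a compact complex manifold every global psh function is constant, so no such $u$ exists. The correct substitute is a psh \emph{weight} $\tau$ on $L$ with $\tau=-\infty$ on $E$ and (after subtracting a constant) $\tau\le\phi$ on $K$, and the convex combination $(1-\e)\psi+\e\tau$ in place of $\psi+\e u$. The existence of such $\tau$ is the global Josefson theorem for big classes (locally pluripolar $\Rightarrow$ globally $\theta$-pluripolar, cf.~\cite{GZ},~\cite{BEGZ}), a nontrivial extra ingredient the paper's pointwise approach sidesteps. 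Two smaller points: your inclusion $(X\setminus U)\cap\Omega\subset K\cap\{P_K\phi=\phi\}$ is false as stated---you only get $K\cap\{P_K\phi\ge\phi\}$, and must again remove the pluripolar set $K\cap\{\phi_K^*>\phi_K\}$, which $\MA$ does not charge; and the continuity hypothesis on $\chi$ is unnecessary and, if kept, makes the passage to $(dd^cP_K\phi)^n=0$ via Bedford--Taylor delicate when $P_K\phi$ is not continuous---drop it and allow any psh $\chi$ with $\limsup_{z\to\partial B}\chi\le P_K\phi$, and your argument yields maximality directly.
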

The technique of proof is pretty standard (see e.g~\cite{Dem2}, p.17), but we provide details since this result plays a crucial role in the proof of Theorem B. 
\begin{proof} Let $\Omega$ be as before a Zariski open subset of $X$ such that every psh weight of $L$ with minimal singularities is locally bounded on $\Omega$. Note that $\eq(K,\phi)$ puts no mass on the Zariski closed subset $X-\Omega$ since the latter is in particular pluripolar. In order to prove (i) we thus have to show that $\eq(K,\phi)$ puts no mass on any given (small) open ball $B\subset\Omega-K$. 

By Choquet's lemma there exists a non-decreasing sequence $\psi_j$ of psh weights with minimal singularities such that $\psi_j\le\phi$ on $K$ and $\psi_j\to P_K\phi$ a.e.~wrt Lebesgue measure. Since $\psi_j$ is bounded on $B$, by Bedford-Taylor we can find a bounded psh function $\tau_j$ on $B$ such that $(dd^c\tau_j)^n=0$ and 
which coincides with $\psi_j$ on the boundary of $B$ (here we identify psh weights on $L|_B$ with psh functions, implicitly fixing a trivialization of $L|_B$). Since $\tau_j$ can be written as a Perron envelope, it follows that $\tau_j\ge\psi_j$ and $\tau_{j+1}\ge\tau_j$ on $B$. Now let $\widetilde{\psi}_j$ be the psh weight that coincides with $\psi_j$ outside $B$ and with $\tau_j$ on $B$. We then have $\widetilde{\psi}_j=\psi_j\le\phi$ on $K$ since the latter doesn't meet $B$, hence $$\psi_j\le\widetilde{\psi}_j\le\phi_K\le P_K\phi$$ 
by definition of $\phi_K$. We thus see that $P_K\phi$ is also the increasing limit a.e. of the psh weights $\widetilde{\psi}_j$. Since we have $\MA(\widetilde{\psi}_j)=0$ on $B$, it follows that $\MA(P_K\phi)=0$ on $B$ by continuity of Monge-Amp\`ere along monotonic sequences, and we have thus proved that $\eq(K,\phi)$ is concentrated on $K$. 

As a second step we prove that $\MA(P_K\phi)$ is also concentrated on the closed subset $\{P_K\phi\ge\phi\}$. The argument is essentially the same, except that we need to be slightly more careful to guarantee that $\widetilde{\psi}_j\le\phi$ on $B$. Let thus $x_0\in\Omega$ such that $P_K\phi(x_0)<\phi(x_0)-\e$ with $\e>0$. If $B$ is a small open ball centered at $x_0$, we can identify weights on $L|_B$ with functions. If $B$ is small enough we have $P_K\phi<\phi(x_0)-\e$ on $B$ by upper semi-continuity of $P_K\phi$ and $\phi\ge\phi(x_0)-\e$ by continuity of $\phi$. If $\tau_j$ denotes as above the bounded psh function on $B$ such that $(dd^c\tau_j)^n=0$ and which coincides with $\psi_j$ on the boundary of $B$, then $\psi_j\le\phi(x_0)-\e$ on $B$ implies $\tau_j\le\phi(x_0)-\e$ on the boundary of $B$, hence 
$$\tau_j\le\phi(x_0)-\e\le\phi$$ 
on $B$ by pluri-subharmonicity of $\tau_j$ (since $\phi(x_0)-\e$ is a constant). We thus see that $\widetilde{\psi}_j$ defined as above satisfies $\widetilde{\psi}_j\le\phi$ on $K$, and the same reasoning as above yields $\MA(P_K\phi)=0$ on $B$ as desired. 

Finally observe that the same sequence $\psi_j$ as above satisfies 
$$
\int_X(\psi_j-\phi)\MA(P_K\phi)\le 0
$$
since $\psi_j\le\phi$ on $K$ and $\MA(P_K\phi)$ is concentrated on $K$ by the first step of the proof. It follows that
$$\int_X(P_K\phi-\phi)\MA(P_K\phi)\le 0$$
since
$$\lim_{j\to\infty}\int_X(\psi_j-\phi)\MA(P_K\phi)=\int_X(P_K\phi-\phi)\MA(P_K\phi)$$ 
by Theorem~\ref{thm:continuous}. But we have already shown that $P_K\phi\le\phi$ a.e. wrt $\MA(P_K\phi)$, thus we get $P_K\phi=\phi$ a.e. wrt $\MA(P_K\phi)$ as desired. 
\end{proof}

We now quote from~\cite{Ber2} the following description of $\eq(X,\phi)$ for a smooth weight $\phi$ on $X$, which plays a key role in the present paper (cf.~the proof of Theorem~\ref{thm:robert} below):
\begin{thm} If $\phi$ is a smooth weight on $L$ then $dd^cP_X\phi$ has $L^\infty_{loc}$ coefficients on a Zariski open subset $\Omega$. 
\end{thm}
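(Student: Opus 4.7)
The strategy I propose is to regularise $P_X\phi$ by Bergman-type psh approximants and extract uniform $L^\infty_{loc}$ bounds on their curvature currents. Since $L$ is big, Bonavero and Ji--Shiffman furnish a strictly psh weight $\tau$ on $L$ with analytic singularities; let $\Omega$ be a Zariski open subset on which $\tau$ is smooth with $dd^c\tau\ge\omega$ for some K\"ahler form $\omega$. By the discussion in Section~\ref{sec:big}, every psh weight of minimal singularities on $L$---in particular $P_X\phi$---is locally bounded on $\Omega$, so it can be treated as an honest bounded function after trivialising $L$ locally.

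Define for each $k\ge 1$ the Bergman envelope
$$\phi_k:=\frac{1}{2k}\log\sup_{0\ne s\in H^0(X,kL)}\frac{|s|^2_{k\phi}}{\Vert s\Vert^2_{L^\infty(X,k\phi)}}.$$
This is a psh weight on $L$, being the supremum of log-homogeneous psh functions on $L^*$, and $\phi_k\le\phi$ by construction, so $\phi_k\le P_X\phi$ everywhere. For the reverse estimate I would use weighted Ohsawa--Takegoshi extension, with the strictly psh weight $\tau$ supplying positivity, to produce peak sections at each $x_0\in\Omega$ whose sup-norm is controlled by $\exp(kP_X\phi(x_0)+O(\log k))$; this forces $\phi_k\ge P_X\phi-o(1)$ locally uniformly on $\Omega$, so $\phi_k\to P_X\phi$ uniformly on compact subsets of $\Omega$.

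The decisive step is a uniform upper bound $dd^c\phi_k\le C\omega$ on any $U\Subset\Omega$ with $C$ independent of $k$. Combined with $dd^c\phi_k\ge 0$, this places the sequence $(dd^c\phi_k)$ in a bounded set of $L^\infty(U)$, so Banach--Alaoglu together with the $L^1_{loc}$ convergence $\phi_k\to P_X\phi$ identifies $dd^c P_X\phi$ with the weak-$*$ limit and yields the desired $L^\infty_{loc}$ coefficients. The main obstacle---and the technical heart of the proof---is precisely this uniform Hessian bound on $\phi_k$: the smoothness of $\phi$ enters decisively, through an application of the sub-mean-value inequality to $|s|^2 e^{-2k\phi}$ on balls of radius $\sim k^{-1/2}$ combined with the quadratic Taylor expansion of $\phi$, converting the global sup-norm constraint defining $\phi_k$ into a local Hessian bound relative to $\omega$, in the spirit of Tian-type Bergman kernel expansions. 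Conceptually $P_X\phi$ is pinched between the smooth obstacle $\phi$ on the contact set and solutions of the homogeneous Monge-Amp\`ere equation off the contact set, and any quantitative version of that dichotomy delivers $C^{1,1}$ regularity; the Bergman route simply makes the quantification canonical.
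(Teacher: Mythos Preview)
The paper does not give its own proof of this theorem: it is quoted from \cite{Ber2}, with the big-class extension attributed to \cite{BD09}. So there is no in-paper argument to compare against, only the cited literature.

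Your overall strategy---approximate $P_X\phi$ by Bergman-type envelopes $\phi_k$ and pass a uniform $C^{1,1}$ bound to the limit---is indeed close in spirit to \cite{Ber2}, and you correctly isolate the crux: the uniform upper Hessian bound $dd^c\phi_k\le C\omega$. However, the mechanism you invoke does not deliver that bound. Applying the sub-mean-value inequality to $|s|^2e^{-2k\phi}$ on balls of radius $k^{-1/2}$, together with the second-order Taylor expansion of $\phi$, yields the \emph{pointwise} estimate $\sup_X\rho(\mu,k\phi)=O(k^n)$ (this is exactly the first assertion of Theorem~\ref{thm:robert}), which translates into $\phi_k\le\phi+O(k^{-1}\log k)$. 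That is an $L^\infty$ bound on $\phi_k-\phi$, not a bound on $dd^c\phi_k$. The individual competitors $\frac{1}{k}\log|s|$ have distributional Hessian $\frac{1}{k}[\mathrm{div}(s)]$, which is singular; taking a supremum of such weights only forces positivity of the Hessian, never an upper bound, and the constraint $\phi_k\le\phi$ alone cannot close the gap because $\phi_k$ may sit far below $\phi$ away from the contact set.

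The arguments in the literature get the Hessian bound by a different route. The direct pluripotential idea---morally the obstacle-problem argument you allude to in your last sentence---is the second-difference competitor trick: locally, $\frac{1}{2}\bigl(P_X\phi(\cdot+h)+P_X\phi(\cdot-h)\bigr)-C|h|^2$ is psh and $\le\phi$ by the $C^2$ bound on $\phi$, hence $\le P_X\phi$, which gives the upper Hessian bound. The whole difficulty is that translations are only local, so the competitor must be globalised; this is what \cite{BD09} does via Demailly's regularisation using the exponential map of a K\"ahler metric together with a Kiselman--Legendre transform. In \cite{Ber2} the Bergman route is pushed through, but it requires derivative estimates on the Bergman kernel (Cauchy-type bounds on $\sum_j|\partial s_j|^2$ via the reproducing property) combined with a \emph{lower} bound on the Bergman function, itself nontrivial and obtained via Ohsawa--Takegoshi. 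Your sketch omits both ingredients. The intuition in your closing sentence is correct, but ``the Bergman route simply makes the quantification canonical'' understates what is needed: the quantification is the content, and sub-mean-value alone does not supply it.
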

This result has now been extended to the case of an arbitrary big cohomology class in $H^{1,1}(X,\R)$ in~\cite{BD09}. As in~\cite{Ber2,BD09} we infer:
\begin{cor}
\label{cor:equilibrium} If $\phi$ is a smooth weight on $L$ then $\eq(X,\phi)$ is absolutely continuous with respect to Lebesgue measure. In fact we have $dd^{c}\phi\geq0$ pointwise on
the compact subset $E:=\{P_X\phi=\phi\}$, and 
$$\eq(X,\phi)=\vol(L)^{-1}{\bf 1}_E(dd^{c}\phi)^{n}.$$
\end{cor}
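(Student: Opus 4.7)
The plan is to combine three ingredients: a viscosity-style touching argument giving pointwise positivity of $dd^c\phi$ on $E$; Proposition \ref{prop:support} applied to $K=X$, which locates the support of $\MA(P_X\phi)$ on $E$; and the preceding $L^\infty_{loc}$-regularity theorem together with a Lebesgue-density argument, which identifies the two $(n,n)$-forms $(dd^c P_X\phi)^n$ and $(dd^c\phi)^n$ almost everywhere on $E$.

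First, since $\phi$ is continuous and any psh candidate $\psi$ in the sup defining $P_X\phi$ satisfies $\psi\le\phi$ on $X$, we have $P_X\phi\le\phi$ globally; hence $E=\{P_X\phi=\phi\}$ is closed as the super-level set of the usc function $P_X\phi-\phi$. For $x_0\in E$ the smooth weight $\phi$ touches the psh weight $P_X\phi$ from above at $x_0$, and a standard complex-analytic touching argument then yields $dd^c\phi(x_0)\ge 0$: if $i\partial\bar\partial\phi(x_0)(\xi,\bar\xi)<0$ for some complex direction $\xi$, one restricts to a small complex disk through $x_0$ along $\xi$, compares the subharmonic trace of $P_X\phi$ with the harmonic extension of the strictly superharmonic trace of $\phi$, and uses the strong minimum principle to obtain a strict inequality at $x_0$ contradicting $P_X\phi(x_0)=\phi(x_0)$.

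Second, Proposition \ref{prop:support} with $K=X$ gives that $\MA(P_X\phi)$ is concentrated on $E$, i.e.\ $\MA(P_X\phi)=\mathbf{1}_E\MA(P_X\phi)$. By the theorem just stated, $dd^c P_X\phi$ has $L^\infty_{loc}$ coefficients on a Zariski open subset $\Omega\subset X$; since $X\setminus\Omega$ is pluripolar (hence Lebesgue-negligible and of zero non-pluripolar mass), the measure $\MA(P_X\phi)$ coincides on $\Omega$ with the pointwise $(n,n)$-form $(dd^c P_X\phi)^n$.

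Third, I would show that $dd^c P_X\phi=dd^c\phi$ almost everywhere on $E$. Setting $w:=P_X\phi-\phi$, we have $w\in C^{1,1}_{loc}(\Omega)$ (since $P_X\phi$ has $L^\infty_{loc}$ Hessian and $\phi$ is smooth), $w\le 0$, and $\{w=0\}\cap\Omega=E\cap\Omega$. Since $C^{1,1}$ functions admit a classical second-order Taylor expansion almost everywhere, at any Lebesgue density point $x_0$ of $E\cap\Omega$ where such an expansion exists, the conditions $w\le 0$ and the vanishing of $w$ on a subset of density one at $x_0$ force both the gradient and the Hessian of $w$ at $x_0$ to be zero. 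Hence $dd^c P_X\phi=dd^c\phi$ almost everywhere on $E$, and combining with the preceding step,
$$\MA(P_X\phi)=\mathbf{1}_E(dd^c P_X\phi)^n=\mathbf{1}_E(dd^c\phi)^n.$$
Dividing by $\vol(L)$ yields the claimed formula for $\eq(X,\phi)$. The main obstacle is this density-point step: one must carefully invoke that $C^{1,1}$ regularity suffices for a pointwise second-order Taylor expansion at a.e.\ point, and that vanishing on a full-density subset of such a point forces the Hessian to vanish—a standard but non-trivial piece of real analysis.
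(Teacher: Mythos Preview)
Your overall strategy and the touching argument for pointwise positivity of $dd^c\phi$ on $E$ agree with the paper's (which phrases the latter as: if $dd^c\phi<0$ at $x_0$ then $P_X\phi-\phi\le 0$ is strictly psh near $x_0$, so cannot attain an interior maximum there). The explicit invocation of Proposition~\ref{prop:support} to get $\MA(P_X\phi)$ concentrated on $E$ is also used implicitly by the paper. The difference, and the gap, lies in your third step.

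You assert $w\in C^{1,1}_{loc}(\Omega)$ because ``$P_X\phi$ has $L^\infty_{loc}$ Hessian''. But the preceding theorem only gives that $dd^cP_X\phi$ has $L^\infty_{loc}$ coefficients, i.e.\ the \emph{complex} Hessian $(\partial^2 P_X\phi/\partial z_i\partial\bar z_j)$ is bounded. For a quasi-psh function this yields a bounded Laplacian, hence $W^{2,p}_{loc}$ for every finite $p$ by $L^p$ elliptic theory, but \emph{not} a bounded full real Hessian, so not $C^{1,1}$. Your density-point/second-order-Taylor argument can be rescued by appealing instead to a.e.\ second-order differentiability of $W^{2,p}$ functions for $p$ larger than the real dimension, but that is a different (and less elementary) theorem than the $C^{1,1}$ Alexandrov-type statement you invoke.

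The paper avoids this regularity issue altogether. It uses only that $u=P_X\phi-\phi\in W^{2,1}_{loc}$ (which follows from $\Delta u\in L^\infty_{loc}\subset L^p_{loc}$ and interior estimates) together with the classical Stampacchia-type fact recorded as Lemma~\ref{lem:deriv}: if $v\in W^{1,1}_{loc}$ vanishes a.e.\ on a measurable set $A$, then each $\partial v/\partial x_i$ vanishes a.e.\ on $A$. Applying this first to $u$ and then to its first partial derivatives gives $\partial^2 u/\partial z_i\partial\bar z_j=0$ Lebesgue-a.e.\ on $E$, which is exactly what is needed, without any pointwise Taylor expansion or density-point reasoning.
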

\begin{proof} Since $dd^cP_X\phi$ has $L^{\infty}_{\text{loc}}$ coefficients on $\Omega$ a local convolution argument shows that the Bedford-Taylor measure $(dd^cP_X\phi)^n$ has $L^{\infty}_{\text{loc}}$ density with respect to Lebesgue measure on $\Omega$ and coincides with the pointwise $n$-th exterior power of $dd^cP_X\phi$ (compare for instance~\cite{Dem2} p.16). 

If $dd^c\phi<0$ at a point $x_0\in X$ then the function $P_X\phi-\phi\le 0$ is strictly psh in a neighbourhood of $x_0$, so it cannot vanish at $x_0$ by the maximum principle. This shows that $dd^c\phi\ge 0$ pointwise on $E$. 

Since both $\eq(X,\phi)$ and $(dd^c\phi)^n$ put no mass on $X-\Omega$ there remains to show that $u:=P_X\phi-\phi$ satisfies 
$\frac{\partial u}{\partial z_i\partial\overline{z}_j}=0$ Lebesgue-a.e. on $E\cap B$ for each ball $B$ in a coordinate chart centered at a point of $E$. Since $dd^cu$ has $L^{\infty}_{\text{loc}}$-coefficients we have in particular $\Delta u\in L^1_{\text{loc}}$ hence $u\in W^{2,1}_{\text{loc}}$ by elliptic regularity. The result now follows by succesively applying Lemma~\ref{lem:deriv}Ê below to $u$ and its first partial derivatives. 
\end{proof} 

\begin{lem}\label{lem:deriv} Let $A$ be a measurable subset of $\R^m$ and let $v\in W^{1,1}_{\text{loc}}(\R^m)$ such that $v=0$ a.e. on $A$. Then $\partial v/\partial x_i=0$ a.e. on $A$ for $i=1,...,m$. 
\end{lem}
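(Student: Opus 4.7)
The plan is to reduce the $m$-dimensional statement to a one-dimensional claim about absolutely continuous functions, by means of the ACL (``absolutely continuous on lines'') characterization of Sobolev functions together with Fubini's theorem. After discarding the null subset $\{v\ne 0\}\cap A$ I may assume that $v\equiv 0$ on $A$, and since the statement is local I can fix $i\in\{1,\ldots,m\}$ and work with a compactly supported $v$.

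First I would invoke the ACL characterization: every $v\in W^{1,1}_{\mathrm{loc}}(\R^m)$ has a representative such that, for almost every line $\ell$ parallel to the $x_i$-axis, the restriction $v|_\ell$ is locally absolutely continuous and its pointwise a.e.\ derivative coincides one-dimensionally a.e.\ on $\ell$ with the distributional derivative $\partial v/\partial x_i$. By Fubini, the hypothesis $v=0$ a.e.\ on $A$ further implies that for almost every such $\ell$, $v|_\ell$ vanishes at one-dimensional almost every point of $A\cap\ell$.

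The core of the argument is then the following one-variable fact: if $f\colon\R\to\R$ is locally absolutely continuous and $f=0$ one-dimensionally a.e.\ on a measurable set $E\subset\R$, then $f'=0$ one-dimensionally a.e.\ on $E$. This rests on two classical ingredients: $f$ is classically differentiable at one-dimensional almost every point, and by the Lebesgue density theorem almost every $t_0\in E$ is a point of density one of $E$. At such a $t_0$, with $f(t_0)=0$ and $f$ differentiable, the density property yields a sequence $t_n\to t_0$ in $E$ with $f(t_n)=0$, which forces the difference quotients $(f(t_n)-f(t_0))/(t_n-t_0)$ to vanish, hence $f'(t_0)=0$.

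Reassembling via Fubini, the pointwise conclusion along almost every line $\ell$ integrates up to give $\partial v/\partial x_i=0$ at Lebesgue-a.e.\ point of $A$. The only place requiring any real care is the one-dimensional density-point step; the multidimensional part is essentially bookkeeping with ACL and Fubini. I do not anticipate a substantial obstacle beyond keeping the measurability details in order.
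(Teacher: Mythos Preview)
Your argument is correct and is essentially the standard proof of this classical fact. The paper itself does not supply a proof of Lemma~\ref{lem:deriv}; it simply refers the reader to Kinderlehrer--Stampacchia~\cite{KS}, p.~53, where the same ACL-plus-Fubini reduction to the one-dimensional density-point argument is carried out. So your proposal is more detailed than what appears in the paper, but it follows the same route as the cited reference.
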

See for instance~\cite{KS} p.53 for a proof.

\subsection{Approximation by pluri-subharmonic envelopes of smooth weights}
Let $K$ be a given compact non-pluripolar subset of $X$. We first record the following straightforward properties of the projection operator $P_K$. 

\begin{lem}\label{lem:projection}The projection operator $P_K$ is non-decreasing, concave and  continuous along decreasing sequences of continuous weights on $L|_K$. It is also $1$-Lipschitz continuous: 
$$\sup_{X}|P_K\phi_1-P_K\phi_2|\leq\sup_K|\phi_1-\phi_2|$$
 for any two continuous weights $\phi_1,\phi_2$ on $L|_K$.  
\end{lem}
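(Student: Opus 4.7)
The plan is to establish all four properties directly from the envelope definition $\phi_K = \sup\{\psi : \psi \text{ psh weight on } L,\ \psi \le \phi \text{ on } K\}$ and $P_K\phi = \phi_K^*$, relying essentially only on the Brelot--Cartan lemma (which guarantees that $P_K\phi = \phi_K$ outside a pluripolar set of $X$). Monotonicity is immediate: if $\phi_1 \le \phi_2$ on $K$, every candidate for $\phi_{1,K}$ is also a candidate for $\phi_{2,K}$, hence $\phi_{1,K} \le \phi_{2,K}$ pointwise, and passing to usc regularizations preserves the inequality.

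For the 1-Lipschitz property, set $c := \sup_K |\phi_1 - \phi_2|$ so that $\phi_1 \le \phi_2 + c$ on $K$. Any psh weight $\psi \le \phi_1$ on $K$ then satisfies $\psi - c \le \phi_2$ on $K$, so $\psi - c \le P_K\phi_2$ on $X$ and $\psi \le P_K\phi_2 + c$. Taking the sup over such $\psi$ and then the usc regularization gives $P_K\phi_1 \le P_K\phi_2 + c$, and the symmetric argument yields the reverse inequality. For concavity, fix $t \in [0,1]$ and denote $\phi := t\phi_1 + (1-t)\phi_2$. If $\psi_i$ is a psh weight with $\psi_i \le \phi_i$ on $K$, then $t\psi_1 + (1-t)\psi_2$ is again a psh weight bounded above by $\phi$ on $K$, hence by $P_K\phi$ on $X$. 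Taking successive suprema over $\psi_1$ and $\psi_2$ gives $t\phi_{1,K} + (1-t)\phi_{2,K} \le P_K\phi$ pointwise; since $P_K\phi$ is already usc, the usc regularization of the left-hand side, which equals $tP_K\phi_1 + (1-t)P_K\phi_2$ (as the latter is psh and differs from $t\phi_{1,K} + (1-t)\phi_{2,K}$ only on a pluripolar set by Brelot--Cartan), is also dominated by $P_K\phi$.

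Finally, for continuity along a decreasing sequence $\phi_j \searrow \phi$ of continuous weights with continuous limit on the compact set $K$, Dini's theorem yields uniform convergence $\phi_j \to \phi$ on $K$, and the 1-Lipschitz estimate just proved then gives $\sup_X |P_K\phi_j - P_K\phi| \le \sup_K |\phi_j - \phi| \to 0$, so convergence in fact holds uniformly on $X$. The only mildly delicate point in the entire argument is the interplay between usc regularization and convex combinations invoked in the concavity proof, and this is handled via the Brelot--Cartan observation; apart from this, all four properties are formal consequences of the sup-envelope definition of $P_K$.
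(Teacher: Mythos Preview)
The paper does not actually prove this lemma; it simply records it as ``straightforward'' and moves on. Your argument correctly supplies the omitted details, and the overall strategy (monotonicity and the Lipschitz bound directly from the envelope definition, then Dini plus Lipschitz for decreasing continuity) is exactly what the authors have in mind.

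One small remark on the concavity step: your intermediate claim that the usc regularisation of $t\phi_{1,K}+(1-t)\phi_{2,K}$ \emph{equals} $tP_K\phi_1+(1-t)P_K\phi_2$ is slightly more than you need and not entirely obvious as stated. It is cleaner to bypass it: you have already shown $t\phi_{1,K}+(1-t)\phi_{2,K}\le P_K\phi$ everywhere, and by Brelot--Cartan the psh weight $g:=tP_K\phi_1+(1-t)P_K\phi_2$ agrees with the left-hand side outside a pluripolar (hence Lebesgue-null) set, so $g\le P_K\phi$ a.e.; since both $g$ and $P_K\phi$ are psh, the inequality then holds everywhere by the sub-mean-value property. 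This is really what your parenthetical is driving at, and with that reading the argument is complete.
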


The following approximation result
will allow us to reduce the proof of Theorem A
to the case of smooth weights.

\begin{prop}\label{prop:approx} Let $L$ be a big line bundle.
\begin{itemize}
\item Let $\psi$ be a psh weight on $L$. Then there exists a decreasing sequence of smooth weights $\phi_j$ on $L$ such that $\lim_{j\to\infty}P_X\phi_j=\psi$
pointwise on $X$.
\item Let $(K,\phi)$ be a weighted subset. Then there exists an increasing sequence $\phi_{j}$ of smooth weights on $L$ such that $\lim_{j\to\infty} P_X\phi_j=\phi_K$
almost everywhere wrt Lebesgue measure.
\end{itemize}
\end{prop}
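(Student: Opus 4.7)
The plan is to handle the two parts separately, treating part~(i) as a short consequence of regularization and part~(ii) as the main technical content.

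For part~(i), the plan is to regularize $\psi$ directly. Write $\psi=\phi_{0}+u$ with $\phi_{0}$ a fixed smooth reference weight, $\theta:=dd^{c}\phi_{0}$, and $u$ a $\theta$-psh function on $X$; Demailly's regularization (or chartwise convolutions glued via a partition of unity) then produces a pointwise decreasing sequence of smooth functions $u_{j}\searrow u$. Setting $\phi_{j}:=\phi_{0}+u_{j}$ gives a decreasing sequence of smooth weights with $\phi_{j}\searrow\psi$. Since $\psi$ is itself a psh candidate $\le\phi_{j}$ in the sup defining $P_{X}\phi_{j}$, one has $\psi\le P_{X}\phi_{j}\le\phi_{j}$, which squeezes $P_{X}\phi_{j}\to\psi$ pointwise (and by Lemma~\ref{lem:projection} the sequence $P_{X}\phi_{j}$ is itself monotone decreasing).

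For part~(ii), the goal is to build a smooth, monotone-increasing sequence $\phi_{j}$ that on $K$ tends to $\phi$ from below and off $K$ diverges to $+\infty$ uniformly off every fixed open neighborhood of $K$. Concretely, start with smooth approximants $\phi^{(j)}$ of the continuous weight $\phi$ satisfying $|\phi^{(j)}-\phi|\le 1/j$ on $X$, and smooth non-negative cut-offs $\chi_{j}$ vanishing on a shrinking open neighborhood of $K$ and tending to $+\infty$ uniformly off each fixed neighborhood of $K$. The raw candidate $\widetilde\phi_{j}:=\phi^{(j)}-1/j+\chi_{j}$ is smooth, satisfies $\widetilde\phi_{j}\le\phi$ on a neighborhood of $K$ and $\widetilde\phi_{j}\ge\phi-2/j$ on all of $X$, and tends to $+\infty$ off $K$; monotonicity in $j$---which a priori fails, since smoothings of $\phi$ are not canonically monotone---is enforced by passing to Demailly's regularized maximum $\phi_{j}:=\widetilde{\max}_{\eta_{j}}(\widetilde\phi_{1},\dots,\widetilde\phi_{j})$, with $\eta_{j}\to 0$ chosen small enough to preserve the asymptotic behavior on and off $K$.

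Given such a sequence, $\lim_{j}P_{X}\phi_{j}=\phi_{K}$ will follow from a double inequality. For the upper bound, $\phi_{j}\le\phi+o(1)$ on $K$ makes $P_{X}\phi_{j}$ (up to an $o(1)$ shift) a psh candidate in the sup defining $\phi_{K}$, so $P_{X}\phi_{j}\le\phi_{K}+o(1)$. For the lower bound, fix a psh candidate $\psi\le\phi$ on $K$; after replacing $\psi$ by $\max(\psi,\tau)$ for a fixed psh weight $\tau$ with minimal singularities and $\tau\le\phi$ on $K$, one may assume $\psi$ has minimal singularities, so $\psi\le\phi_{0}+C$ for some constant $C$. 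Given $\epsilon>0$, upper semi-continuity of $\psi$ and continuity of $\phi$ yield an open neighborhood $U\supset K$ with $\psi<\phi+\epsilon/2$ on $U$; on $U$ the estimate $\phi_{j}\ge\phi-O(1/j)$ gives $\psi-\epsilon\le\phi_{j}$ for $j$ large, while on $X\setminus U$ the uniform divergence of $\chi_{j}$ together with $\psi\le\phi_{0}+C$ yields the same inequality. Thus $\psi-\epsilon\le P_{X}\phi_{j}$ globally for $j$ large; letting $\epsilon\to 0$ and taking the sup over $\psi$ gives $\phi_{K}\le\lim P_{X}\phi_{j}$ pointwise, and combining the two bounds yields the desired equality.

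The main obstacle is the construction in part~(ii): producing a single sequence that is simultaneously smooth, genuinely increasing in $j$, asymptotic to $\phi$ from below on $K$, and divergent to $+\infty$ off $K$ uniformly off neighborhoods of $K$. Since smoothings of $\phi$ are not canonically monotone, the natural ansatz $\widetilde\phi_{j}$ is not automatically increasing, and some device such as the regularized maximum (or an equivalent subsequence-with-correction argument) is required to reconcile monotonicity with smoothness and with the behavior on both $K$ and its complement. A subsidiary subtlety is that $\phi_{K}$ need not be upper semi-continuous, so the pointwise equality $\lim P_{X}\phi_{j}=\phi_{K}$ might in principle fail on a pluripolar set---but pluripolar sets have zero Lebesgue measure, matching the almost-everywhere statement.
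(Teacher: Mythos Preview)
Your treatment of part~(i) coincides with the paper's: any decreasing sequence of smooth weights $\phi_j\searrow\psi$ (which exists since $\psi$ is usc) yields $\psi\le P_X\phi_j\le\phi_j$, and squeezing concludes.

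For part~(ii) you take a genuinely different route. The paper argues abstractly: it first uses part~(i) together with Dini's lemma to show that
\[
\phi_K=\sup\{P_X\tau:\ \tau\ \text{continuous weight on }L,\ P_X\tau\le\phi\ \text{on }K\},
\]
observes that this family of psh weights is stable under $\max$, invokes Choquet's lemma to extract an increasing sequence of continuous $\tau_j$ with $P_X\tau_j\to\phi_K$ a.e., and finally replaces each $\tau_j$ by a nearby smooth $\phi_j$ with $\tau_j-1/j\le\phi_j\le\tau_j$. Your approach is instead constructive: build smooth weights approximating $\phi$ on $K$ and diverging to $+\infty$ off $K$, then verify the two-sided bound on $P_X\phi_j$ directly. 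This works and in fact delivers the pointwise identity $\lim_j P_X\phi_j=\phi_K$ (your upper bound $P_X\phi_j\le\phi_K$ holds everywhere once $\phi_j\le\phi$ on $K$, and your lower bound is already pointwise), a slight strengthening of the stated a.e.\ conclusion. The paper's route is shorter and makes elegant use of part~(i), while yours avoids Choquet's lemma entirely.

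There is one gap in your monotonicity device. For the standard regularized maximum one does have that adding an argument increases the value at a \emph{fixed} regularization parameter $\eta$, but near points where several $\widetilde\phi_i$ coincide the excess over the ordinary $\max$ is of order $\eta$ and is typically increasing in $\eta$ (by convexity of $\max$); sending $\eta_j\to 0$ therefore works \emph{against} monotonicity in $j$, not for it. The simplest repair is to make the raw sequence increasing from the start and dispense with the regularized max: choose smooth $g_j$ with $\sup_X|g_j-\phi|\le 2^{-j}$, set $\psi_j:=g_j-3\cdot 2^{-j}$ (so $\psi_j<\phi$ everywhere and $\psi_{j+1}\ge\psi_j$ by the triangle inequality), choose the cutoffs $\chi_j\ge 0$ increasing in $j$ with $\chi_j\equiv 0$ on shrinking neighborhoods of $K$, and put $\phi_j:=\psi_j+\chi_j$. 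Then $\phi_j$ is smooth, increasing, satisfies $\phi_j<\phi$ on $K$ and $\phi_j\ge\phi-2^{-j+2}$ on $X$, and the rest of your argument goes through verbatim.
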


\begin{proof} Since $\psi$ is in particular upper semi-continuous, we can find a decreasing sequence $\phi_j$ of smooth weights such that $\phi_j\to\psi$ pointwise on $X$. Since $\psi\le\phi_j$ is psh, we infer $\psi\le P_X\phi_j\le\phi_j$, and it follows that $P_X\phi_j$ also decreases pointwise to $\psi$, which proves the first point.

Let us now prove the second point. We first claim that 
$$\phi_K=\sup\{P_X\tau,\,\,\tau\textrm{ continuous weight on }L,\,\,P_X\tau\le\phi\textrm{ on }K\}.$$
Indeed let $\psi$ be psh weights such that $\psi\le\phi$ on $K$ and let $\e>0$. By the first part of the proof, there exists a decreasing sequence $\phi_j$ of smooth weights such that $P_X\phi_j$ decreases pointwise to $\psi-\e$ as $j\to\infty$. By Dini's lemma, it follows that the usc function $P_X\phi_j-\phi$
is $\le 0$ on the compact set $K$ for $j\gg 1$
 large enough, and we thus get
$$\psi-\e\le P_X\phi_j\le\phi_K$$ 
for $j$ large enough, hence the claim. Since the family of psh weights $P_X\tau$ as above is clearly stable by max, Choquet's lemma thus shows that there exists an increasing sequence $\tau_j$ of continuous weights such that $\tau_j\le\phi$ on $K$ and $P_X\tau_j\to\phi_K$ a.e. To conclude the proof we simply take an increasing sequence of smooth weights $\phi_j$ such that
$$\tau_j-1/j\le\phi_j\le\tau_j.$$ 
\end{proof}

\begin{rem}\label{rem:ample} When $L$ is ample one can show using Demaily's regularization theorem \cite{Dem3} that the smooth weights $\phi_j$ in both parts of Proposition~\ref{prop:approx} can furthermore be taken to be \emph{strictly psh}, and in particular $P_X\phi_j=\phi_j$. This shows in particular that 
$$\phi_K=\sup\{\psi,\,\,\psi\textrm{ continuous psh weight on }L,\,\,\psi\le\phi\textrm{ on }K\},$$
which is thus always lower semi-continuous in that case. It follows that $(K,\phi)$ is regular iff $\phi_K$ is continuous when $L$ is ample, which corresponds to the classical definition (cf.~\cite{Kli}). 
\end{rem}

\section{The Bergman distortion function and the Bernstein-Markov property}\label{sec:BM}
\subsection{Bergman kernels}\label{sec:bergman}
Let $(\mu,\phi)$ be a weighted measure, and let $E$ be the support of $\mu$, which is non-pluripolar by our standing assumptions. The \emph{Bergman distortion function} $\rho(\mu,\phi)$ is defined at a point $x\in E$ as the squared operator norm of the evaluation operator
$$\mathrm{ev}_x:H^0(L)\to L_x,$$
in other words
\begin{equation}
\rho(\mu,\phi)(x)=\sup_{s\in H^{0}(L)-\{0\}}|s(x)|_\phi^2/\Vert s\Vert^2_{L^{2}(\mu,\phi)}.\label{equ:bergman}\end{equation}
Since $\mu$ is a probability measure we have
$$\Vert s\Vert_{L^{2}(\mu,\phi)}\le\Vert s\Vert_{L^\infty(E,\phi)},$$
which shows that 
$$\sup_E\rho(\mu,\phi)^{1/2}$$
is exactly the distortion between the $L^2(\mu,\phi)$ and $L^\infty(E,\phi)$-norms on $H^0(L)$. 

If $S=(s_1,...,s_N)$ denotes an $L^2(\mu,\phi)$-orthonormal basis of $H^{0}(L)$, then it is well-known that 
$$\rho(\mu,\phi)=\sum_{j=1}^N|s_j|^2_\phi.$$

The \emph{Bergman measure} associated to $(\mu,\phi)$ is now defined as 
\begin{equation}\label{equ:berg_meas}\b(\mu,\phi):=N^{-1}\rho(\mu,\phi)\mu.
\end{equation}
Note that it is a \emph{probability} measure since we have
$$\int_X\rho(\mu,\phi)\mu=\sum_j\Vert s_j\Vert^2_{L^2(\mu,\phi)}=N.$$
If we now replace $\phi$ by $k\phi$, then the relation
$$\sup_K\rho(\mu,k\phi)\ge N_k$$
shows that the distortion between the $L^2(\mu,k\phi)$ and $L^\infty(E,k\phi)$-norms on $H^{0}(kL)$ grows at least like $k^{n/2}$ as $k\rightarrow\infty$.

Assume now that $\mu$ is a smooth positive volume form on $X$ and that $\phi$ is smooth, so that $E=X$ in particular. When $\phi$ has strictly positive curvature, the celebrated Bouche-Catlin-Tian-Zelditch theorem (\cite{Bouche,Cat,Tia1,Zel}) asserts that $\beta(\mu,k\phi)$ admits a full asymptotic expansion in the space of smooth volume forms, with the probability measure $\eq(X,\phi)$ 
as the dominant term. 

As was shown by the first named author (in~\cite{Ber1} for the $\PP^n$ case and in~\cite{Ber2} for the general case), part of this result still holds when the positive curvature assumption on $\phi$ is dropped. 

\begin{thm}\label{thm:robert} Let $L$ be a big line bundle, $\mu$ be a
smooth positive volume form on $X$ and $\phi$ be a $C^2$ weight on
$L$. Then we have 
\begin{itemize}
\item  $\sup_X\rho(\mu,k\phi)=O(k^n).$
\item $\lim_{k\to\infty}\beta(\mu,k\phi)=\eq(X,\phi)$
in the weak topology of measures.  
\end{itemize}
\end{thm}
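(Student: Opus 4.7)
The plan is to combine the extremal characterization of the Bergman function with a local Bergman kernel analysis near the contact set $E:=\{P_X\phi=\phi\}$, relying on Corollary~\ref{cor:equilibrium}.

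For the first assertion, given $s\in H^0(kL)$, observe that $\tfrac1k\log|s|$ is a psh weight on $L$, so $\psi:=\tfrac1k\log|s|-\tfrac1k\log\|s\|_{L^\infty(X,k\phi)}$ is a psh weight satisfying $\psi\leq\phi$ on $X$; by definition of $P_X\phi$ this forces $\psi\leq P_X\phi$, which exponentiates to the pointwise extremal bound
$$|s(x)|^2_{k\phi}\leq e^{-2k(\phi(x)-P_X\phi(x))}\,\|s\|^2_{L^\infty(X,k\phi)}.$$
A standard sub-mean value inequality on Euclidean balls of radius $\sim k^{-1/2}$---applied after absorbing the linear part of the local Taylor expansion of $\phi$ into an auxiliary holomorphic factor, and using the $C^2$-regularity of $\phi$ together with the positivity of the smooth density $\mu$---yields $\|s\|^2_{L^\infty(X,k\phi)}\leq Ck^n\|s\|^2_{L^2(\mu,k\phi)}$ uniformly in $s$ and $k$. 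Combining with \eqref{equ:bergman} gives
$$\rho(\mu,k\phi)(x)\leq Ck^n e^{-2k(\phi(x)-P_X\phi(x))}.$$
Since $\phi\geq P_X\phi$ everywhere, this already establishes $\sup_X\rho(\mu,k\phi)=O(k^n)$, and moreover shows that $k^{-n}\rho(\mu,k\phi)$ decays exponentially on every compact subset of $X\setminus E$.

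For the weak convergence, recall from Corollary~\ref{cor:equilibrium} that $\eq(X,\phi)=\vol(L)^{-1}\mathbf{1}_E(dd^c\phi)^n$ and $dd^c\phi\geq 0$ pointwise on $E$. Since $\beta(\mu,k\phi)$ is a probability measure, Prokhorov's theorem produces subsequential weak limits, and by the decay estimate above each such limit is concentrated on $E$. Using $N_k\sim\vol(L)k^n/n!$, the claimed convergence reduces to the pointwise statement that, for Lebesgue-a.e.\ $x_0$ in the interior of $E$, $k^{-n}\rho(\mu,k\phi)(x_0)$ tends to the density of $\tfrac{1}{n!}(dd^c\phi)^n$ with respect to $\mu$ at $x_0$. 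This is obtained by a Bergman analysis at scale $k^{-1/2}$: at points $x_0\in\mathrm{Int}(E)$ where $dd^c\phi(x_0)$ is strictly positive (which is the Lebesgue-a.e.\ case on the support of $(dd^c\phi)^n|_E$), H\"ormander $L^2$-estimates applied with the psh weight $kP_X\phi$---corrected by a small strictly psh local perturbation to restore positivity---produce a holomorphic section peaking in a $k^{-1/2}$-neighborhood of $x_0$, whose $L^2$-mass matches the Gaussian integral attached to the Hermitian form $dd^c\phi(x_0)$. This yields the desired lower bound, and the matching upper bound follows from the extremal estimate above specialized to such small balls, where $\phi$ and $P_X\phi$ differ only by $o(1/k)$ thanks to the $C^{1,1}$-regularity of $P_X\phi$ on $\mathrm{Int}(E)$ recalled before Corollary~\ref{cor:equilibrium}.

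The main obstacle is this local matching step on $\mathrm{Int}(E)$: one has to run the H\"ormander construction against the non-smooth psh weight $P_X\phi$ and control the discrepancy with the Gaussian model, using crucially the $C^{1,1}$-regularity of $P_X\phi$ together with Corollary~\ref{cor:equilibrium}. This is the technical heart of~\cite{Ber2}, from which the theorem is quoted.
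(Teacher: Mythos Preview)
Your handling of the first assertion matches the paper's: the extremal estimate $|s|^2_{k\phi}\le e^{-2k(\phi-P_X\phi)}\|s\|^2_{L^\infty(X,k\phi)}$ together with a local mean-value/Taylor argument yields both $\sup_X\rho(\mu,k\phi)=O(k^n)$ and the exponential decay of $N_k^{-1}\rho(\mu,k\phi)$ off the contact set $E$.

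For the second assertion you take a substantially harder route than the paper's sketch, and your reduction has a gap. The paper exploits a shortcut you overlook: since $\beta(\mu,k\phi)$ and $\eq(X,\phi)$ are both \emph{probability} measures, it suffices to show that every weak accumulation point $\nu$ of $\beta(\mu,k\phi)$ satisfies $\nu\le\eq(X,\phi)$. This one-sided inequality requires only a pointwise \emph{upper} bound. The same mean-value argument, run with the sharp Gaussian constant coming from the second-order Taylor expansion of $\phi$, already gives $\limsup_k N_k^{-1}\rho(\mu,k\phi)\le\vol(L)^{-1}(dd^c\phi)^n/\mu$ pointwise on $\{dd^c\phi\ge 0\}\supset E$; combined with the exponential decay off $E$ and the uniform domination $N_k^{-1}\rho(\mu,k\phi)=O(1)$, dominated convergence yields $\nu\le\eq(X,\phi)$, hence $\nu=\eq(X,\phi)$. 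No H\"ormander peak-section construction and no lower bound are needed. Your two-sided pointwise matching on $\mathrm{Int}(E)$ is therefore unnecessary, and it is also problematic as stated: the contact set $E=\{P_X\phi=\phi\}$ may have empty interior, so restricting the local analysis to $\mathrm{Int}(E)$ can be vacuous even though $\eq(X,\phi)$ has full mass on $E$. The peak-section construction in~\cite{Ber2} is in fact carried out at Lebesgue-a.e.\ points of $E$ (not interior points) using the $L^\infty_{\mathrm{loc}}$ regularity of $dd^cP_X\phi$, but the probability-measure trick in the paper's sketch sidesteps that entire technical core.
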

Since this result plays a crucial role in what follows, we will sketch
its proof for the convenience of the reader, and refer to~\cite{Ber2}
for the complete proof - a slightly more involved one in fact since Fujita's
theorem is not used there but rather given a direct proof by analytic
means.

\begin{proof} By an elementary argument locally replacing $\phi$ by its second order Taylor expansion at the centre of a ball and using the mean value inequality, one shows
that $\sup_X\rho(\mu,k\phi)=O(k^{n})$, i.e.~the first assertion, and 
$$\limsup_{k\to\infty}N_k^{-1}\rho(\mu,k\phi)\le\vol(L)^{-1}(dd^{c}\phi)^{n}/\mu$$
pointwise on the set where $dd^{c}\phi\ge 0$ (compare \cite{Bernsurv} Theorem 2.1). 

We now sketch the proof of the second point. Since we are dealing with probability measures, it is enough to show by weak compactness that if $\nu$ is a given accumulation point of the sequence of measures $\b(\mu,k\phi)$, then necessarily $\nu\le\eq(\mu,\phi)$.

Now set $E:=\{ P_X\phi=\phi\}$,
so that $dd^{c}\phi\geq0$ on $E$ and 
$$\eq(X,\phi)=\vol(L)^{-1}{\bf 1}_E(dd^{c}\phi)^{n}$$
by Corollary~\ref{cor:equilibrium} recalled above from~\cite{Ber2,BD09}.
Since we automatically have 
$$\rho(\mu,k\phi)\leq\exp\left(k(P_X\phi-\phi)\right)\sup_{X}\rho(\mu,k\phi)$$
by Proposition~\ref{prop:max}, the first assertion shows that $N_k^{-1}\rho(\mu,k\phi)$ tends
to $0$ (exponentially fast) pointwise on $X-E$.

Putting all this together yields 
$$\limsup_{k\to\infty}N_k^{-1}\rho(\mu,k\phi)\mu\le\eq(X,\phi)$$
a.e.~on $X$, and Lebesgue's dominated convergence finally implies
that 
$$\nu\leq\eq(X,\phi)$$ 
for any accumulation point $\nu$ as desired. 
\end{proof}

\subsection{The Bernstein-Markov property}
Let $\mu$ be a positive volume form on $X$. By the first part of Theorem~\ref{thm:robert}, if $\phi$ is a $C^2$ weight on $L$ then the distortion 
$$\sup_X\rho(\mu,k\phi)^{1/2}$$ 
between the $L^2(\mu,k\phi)$ and $L^\infty(X,k\phi)$-norms on $H^0(kL)$ grows precisely like $k^{n/2}$, which is the minimal possible growth. 

This fact is no longer true if we drop the smoothness assumption on $\phi$. Arguing as in \cite{Bernsurv} p.3 one can for instance show that the distortion is $O(k^{n/\a})$ when $\phi$ is of class $C^\a$ with $0<\a<2$, and this estimate is optimal. For general $C^0$ weights we have the following elementary fact:
\begin{lem}\label{lem:BMcont} Let $\mu$ be a smooth positive volume form. If $\phi$ is a $C^0$ weight on $L$, then the distortion has at most sub-exponential growth, i.e.~for every $\e>0$ we have 
$$\sup_X\rho(\mu,k\phi)^{1/2}=O(e^{\e k}).$$
\end{lem}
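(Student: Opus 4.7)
The plan is to reduce to the smooth case handled by Theorem~\ref{thm:robert} by approximating $\phi$ uniformly by a smooth weight, and tracking how the $L^2$ and $L^\infty$ norms distort under a uniformly small perturbation of the weight.

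First, fix $\e>0$. Since $\phi-\phi_0$ is continuous on the compact manifold $X$ for any smooth reference weight $\phi_0$, a standard convolution/partition-of-unity argument produces a smooth weight $\widetilde\phi$ on $L$ with
$$\sup_X|\phi-\widetilde\phi|\le\delta,$$
where $\delta>0$ will be chosen small enough depending on $\e$.

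Next, I would compare the relevant norms. For any $s\in H^0(kL)$ and $x\in X$,
$$|s(x)|_{k\phi}^2=|s(x)|^2e^{-2k\phi(x)}\le e^{2k\delta}|s(x)|_{k\widetilde\phi}^2,$$
while integrating the opposite inequality against the probability measure $\mu$ gives
$$\Vert s\Vert^2_{L^2(\mu,k\phi)}\ge e^{-2k\delta}\Vert s\Vert^2_{L^2(\mu,k\widetilde\phi)}.$$
Plugging these into the variational definition~(\ref{equ:bergman}) of the Bergman distortion function yields
$$\rho(\mu,k\phi)(x)\le e^{4k\delta}\,\rho(\mu,k\widetilde\phi)(x)$$
pointwise on $X$.

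Finally, since $\widetilde\phi$ is smooth (in particular of class $C^2$), the first assertion of Theorem~\ref{thm:robert} gives a constant $C>0$ with $\sup_X\rho(\mu,k\widetilde\phi)\le Ck^n$ for all $k$. Therefore
$$\sup_X\rho(\mu,k\phi)^{1/2}\le C^{1/2}k^{n/2}e^{2k\delta},$$
and choosing $\delta\le\e/3$ (so that $\widetilde\phi$ depends on $\e$) and absorbing the polynomial factor $k^{n/2}$ into $e^{k\delta}$ for $k$ large produces the desired bound $O(e^{\e k})$. There is no genuine obstacle here: the only thing to verify is that smooth weights are uniformly dense in continuous weights, which is standard, and the estimate is essentially the routine observation that a uniform shift of a weight by $\delta$ multiplies the $L^\infty/L^2$ distortion by at most $e^{2k\delta}$ at level $k$.
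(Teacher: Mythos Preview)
Your argument is correct, but it differs from the paper's. The paper gives a direct, self-contained proof using only the mean value inequality: cover $X$ by finitely many small balls $B_j$ (with slightly smaller balls $B_j'$ still covering $X$) on which $L$ is trivialised and $\phi$ is $\e$-close to its value $\phi_j$ at the centre; then for $s\in H^0(kL)$ and $x\in B_j'$, apply the mean value inequality to the psh function $|s|^2 e^{-2k\phi_j}$ on $B_j$ to get $|s(x)|^2 e^{-2k\phi}\le Ce^{2\e k}\int_{B_j}|s|^2 e^{-2k\phi}d\mu$. By contrast, you reduce to the smooth case and invoke the $O(k^n)$ bound of Theorem~\ref{thm:robert}. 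Your route is perfectly valid and conceptually clean, but it is less elementary: it relies on the first assertion of Theorem~\ref{thm:robert}, which is itself a sharper local estimate (second-order Taylor expansion plus mean value inequality), and that theorem is stated under the hypothesis that $L$ is big, whereas the paper's direct argument for this lemma needs no such assumption. In short, the paper's proof is a one-step mean value inequality; yours is a uniform approximation followed by a black-box appeal to a stronger result.
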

\begin{proof} Given $\e>0$ we can cover $X$ by a finite number of small balls $B_j$ on which $L$ is trivialised and $\phi$ is $\e$-close to its value $\phi_j$ at the centre of the ball. We can also assume that $X$ is still covered by smaller balls $B_j'$ relatively compact in $B_j$. A section $s\in H^0(kL)$ is identified with a holomorphic section on $B_j$, and the desired inequality 
$$|s(x)|^2e^{-2k\phi}\le Ce^{2\e k}\int_{B_j}|s|^2e^{-2k\phi}d\mu$$
for $x\in B_j'$ is thus an immediate consequence of the mean value inequality applied to the psh function $|s|^2e^{-2k\phi_j}$ on $B_j$. 
\end{proof}

We introduce the following extension of standard terminology (see~\cite{BL2}):
\begin{defi} Let $(K,\phi)$ be a weighted subset. We say that a probability measure $\mu$ on $K$ has the \emph{Bernstein-Markov property} wrt $(K,\phi)$ if the distortion between the $L^2(\mu,k\phi)$ and $L^\infty(K,k\phi)$-norms on $H^0(kL)$ has sub-exponential growth. 
\end{defi}

The following result is shown in~\cite{BBWN}, generalising results of Siciak~\cite{Sic}. 
\begin{thm}\label{thm:BM} Let $(K,\phi)$ be a weighted subset and let $\mu$ be a probability measure on $K$ putting no mass on pluripolar sets. Assume that:
\begin{itemize} 
\item $(K,\phi)$ is \emph{regular}, i.e.~$P_K\phi\le\phi$ holds on $K$.
\item $\mu$ is \emph{determining} for $(K,\phi)$, i.e. for every psh weight $\psi$ on $L$, $\psi\le\phi$ a.e.~wrt $\mu$ already implies $\psi\le\phi$ on $K$. 
\end{itemize} 
Then $\mu$ has the Bernstein-Markov property wrt $(K,\phi)$. 
\end{thm}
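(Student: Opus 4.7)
The plan is to argue by contradiction. Suppose $\mu$ fails the Bernstein-Markov property: there exist $\e_0>0$, integers $k_j\to\infty$ and sections $s_j\in H^0(k_j L)$ with $\Vert s_j\Vert_{L^\infty(K,k_j\phi)}\ge e^{\e_0 k_j}\Vert s_j\Vert_{L^2(\mu,k_j\phi)}$. I would first rescale so that $\Vert s_j\Vert_{L^\infty(K,k_j\phi)}=1$, whence $\Vert s_j\Vert_{L^2(\mu,k_j\phi)}\le e^{-\e_0 k_j}$, and set $\psi_j:=k_j^{-1}\log|s_j|$, a psh weight on $L$. The normalization reads $\sup_K(\psi_j-\phi)=0$, which by Proposition~\ref{prop:max} gives $\psi_j\le\phi_K\le P_K\phi$ on $X$; since $P_K\phi$ has minimal singularities the $\psi_j$ are uniformly bounded above, and upper semi-continuity of $\psi_j-\phi$ on the compact $K$ produces $x_j\in K$ with $\psi_j(x_j)=\phi(x_j)$.

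Next I would exploit the $L^2$-smallness to obtain a $\mu$-almost-everywhere upper bound on $\limsup_j\psi_j$. The bound $\int_X e^{2k_j(\psi_j-\phi+\e_0)}\,d\mu\le 1$ combined with Chebyshev yields, for every $\delta>0$,
\[
\mu\bigl(\{\psi_j>\phi-\e_0+\delta\}\bigr)\le e^{-2k_j\delta},
\]
whose right-hand side is summable in $j$; Borel-Cantelli then gives $\limsup_j\psi_j\le\phi-\e_0+\delta$ $\mu$-a.e., and running $\delta=1/n$ through a sequence tending to zero produces $\limsup_j\psi_j\le\phi-\e_0$ $\mu$-a.e.

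I would then extract a subsequence so that $\psi_j\to\psi_\infty$ in $L^1(X,dV)$ for some psh weight $\psi_\infty$; this is legitimate because $\sup_X\psi_j\ge\psi_j(x_j)\ge\inf_K\phi$ remains bounded in addition to the uniform upper bound. Standard pluripotential theory identifies $\psi_\infty$ with $(\limsup_j\psi_j)^*$ up to a pluripolar set, and since $\mu$ charges no pluripolar subset, the previous step promotes to $\psi_\infty\le\phi-\e_0$ $\mu$-a.e. The shifted weight $\psi_\infty+\e_0$ is still a psh weight satisfying $\psi_\infty+\e_0\le\phi$ $\mu$-a.e., so the determining hypothesis forces $\psi_\infty+\e_0\le\phi$ pointwise on $K$, i.e., $\sup_K(\psi_\infty-\phi)\le-\e_0$.

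Finally I would close the argument by passing to a further subsequence with $x_j\to x_\infty\in K$: for $j$ large, $x_j\in K\cap\overline{B(x_\infty,r)}$ gives $\sup_{K\cap\overline{B(x_\infty,r)}}\psi_j\ge\phi(x_j)$, and Hartogs' lemma for $L^1$-convergent psh sequences yields
\[
\sup_{K\cap\overline{B(x_\infty,r)}}\psi_\infty\ge\limsup_j\sup_{K\cap\overline{B(x_\infty,r)}}\psi_j\ge\phi(x_\infty);
\]
letting $r\to 0$ and invoking upper semi-continuity of $\psi_\infty$ yields $\psi_\infty(x_\infty)\ge\phi(x_\infty)$, contradicting the previous step. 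The hard part will be the transfer from the $\mu$-almost-everywhere bound to a pointwise bound on all of $K$: this is the whole purpose of the determining hypothesis, but it depends crucially on the fact that $\mu$ puts no mass on pluripolar sets, since otherwise the pluripolar-negligible gap between $\limsup_j\psi_j$ and its usc regularization could destroy the inequality.
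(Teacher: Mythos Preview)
Your argument is correct. The paper itself does not prove Theorem~\ref{thm:BM}; it quotes the result from~\cite{BBWN} (generalising Siciak~\cite{Sic}) and only uses two very special cases. Your proof is precisely the natural adaptation of Siciak's classical compactness argument to the line-bundle setting, and is in essence the proof given in~\cite{BBWN}.

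Two minor remarks. First, your identification ``$\psi_\infty=(\limsup_j\psi_j)^*$ up to a pluripolar set'' is slightly understated: one actually has $\psi_\infty=(\limsup_j\psi_j)^*$ \emph{everywhere} (both sides are psh and agree Lebesgue-a.e.), and it is the set $\{(\limsup_j\psi_j)^*>\limsup_j\psi_j\}$ that is pluripolar; this is exactly what you then use, so the logic is sound. Second, you never explicitly invoke the regularity hypothesis $P_K\phi\le\phi$ on $K$. This is not an omission: regularity is in fact a \emph{consequence} of the other two assumptions, since $\phi_K\le\phi$ on $K$ and $P_K\phi=\phi_K$ off a pluripolar set force $P_K\phi\le\phi$ $\mu$-a.e., whence $P_K\phi\le\phi$ on $K$ by the determining property.
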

This somewhat technical looking criterion actually shows that a host of reasonable measures satisfy the Bernstein-Markov property. On the one hand if $K$ is for instance either (the closure of) a smoothly bounded domain or a real analytic totally real $n$-submanifold of $X$, then $(K,\phi)$ is regular for any continuous weight $\phi$. On the other hand if $(K,\phi)$ is regular then it is shown in~\cite{BBWN} that any probability measure on $K$ with support equal to $K$ is determining for $(K,\phi)$, and the domination principle (Corollary~\ref{cor:domination}) shows that the equilibrium measure of $(K,\phi)$ is determining as well (its support is equal to the \v Silov boundary of $(K,\phi)$). 

In the present article, we shall actually only use the following two special cases of Theorem~\ref{thm:BM}: either $\mu$ and $\phi$ are both smooth (in which case the Bernstein-Markov property is a trivial consequence of the mean value inequality, as already noticed), or $\mu$ is the Haar measure on the unit compact torus $\T^n\subset(\C^*)^n\subset\PP^n$ (in which case the Bernstein-Markov property was established in~\cite{NZ}). 

The next lemma will allow us to replace $L^\infty$-balls by $L^2$-balls whenever convenient. 
\begin{lem}
\label{lem:elde} Let $(K,\phi)$ be a weighted subset and let $\mu$ be a probability measure on $K$. Then we have 
$$0\le\cL_k(\mu,\phi)-\cL_k(K,\phi)\le\frac{1}{2k}\log\sup_K\rho(\mu,k\phi).$$
In particular if $\mu$ has the Bernstein-Markov property wrt $(K,\phi)$ then  
$$\lim_{k\to\infty}\cL_k(\mu,\phi)-\cL_k(K,\phi)=0.$$
\end{lem}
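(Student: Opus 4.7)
The plan is to translate the two inequalities into an inclusion of balls and a containment of one ball inside a dilate of the other, then compare their Lebesgue volumes.

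First I would handle the left inequality. Since $\mu$ is a probability measure supported on $K$, for every $s\in H^0(kL)$ we have
$$\|s\|^2_{L^2(\mu,k\phi)}=\int_X |s|^2_{k\phi}\,d\mu\le\sup_K |s|^2_{k\phi}=\|s\|^2_{L^\infty(K,k\phi)}.$$
This immediately gives $\cB^\infty(K,k\phi)\subset\cB^2(\mu,k\phi)$, hence $\vol_k\cB^\infty(K,k\phi)\le\vol_k\cB^2(\mu,k\phi)$. Taking logs and dividing by $2kN_k$ yields $\cL_k(K,\phi)\le\cL_k(\mu,\phi)$.

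Second I would handle the right inequality using the definition of the Bergman distortion function. By (\ref{equ:bergman}), for every $x\in K$ and every nonzero $s\in H^0(kL)$,
$$|s(x)|^2_{k\phi}\le\rho(\mu,k\phi)(x)\,\|s\|^2_{L^2(\mu,k\phi)}\le M_k^2\,\|s\|^2_{L^2(\mu,k\phi)},$$
where $M_k:=\bigl(\sup_K\rho(\mu,k\phi)\bigr)^{1/2}$. Taking the supremum over $x\in K$ gives $\|s\|_{L^\infty(K,k\phi)}\le M_k\|s\|_{L^2(\mu,k\phi)}$, which translates into the inclusion $\cB^2(\mu,k\phi)\subset M_k\cdot\cB^\infty(K,k\phi)$. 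Since $H^0(kL)$ has real dimension $2N_k$, the Lebesgue measure scales as
$$\vol_k\bigl(M_k\cdot\cB^\infty(K,k\phi)\bigr)=M_k^{2N_k}\vol_k\cB^\infty(K,k\phi).$$
Taking logarithms, dividing by $2kN_k$, and substituting $M_k^2=\sup_K\rho(\mu,k\phi)$ produces exactly
$$\cL_k(\mu,\phi)-\cL_k(K,\phi)\le\frac{1}{2k}\log\sup_K\rho(\mu,k\phi).$$

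For the ``In particular'' statement, the Bernstein-Markov property says precisely that $\sup_K\rho(\mu,k\phi)^{1/2}=O(e^{\e k})$ for every $\e>0$; equivalently, $\frac{1}{2k}\log\sup_K\rho(\mu,k\phi)\to 0$ as $k\to\infty$. Combined with the two-sided bound just established, this forces $\cL_k(\mu,\phi)-\cL_k(K,\phi)\to 0$.

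There is no real obstacle here: the argument is essentially a volume comparison for balls of two comparable norms on a finite-dimensional space, and the only point requiring a moment of care is bookkeeping the factor $2N_k$ coming from the real (as opposed to complex) dimension of $H^0(kL)$, which is exactly what makes the normalisation in (\ref{equ:L-func}) and (\ref{equ:L-funcl2}) compatible with the $1$-homogeneous right-hand side.
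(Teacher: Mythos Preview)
Your proof is correct and follows essentially the same approach as the paper: both arguments establish the chain of norm inequalities $\|s\|_{L^2(\mu,k\phi)}\le\|s\|_{L^\infty(K,k\phi)}\le M_k\|s\|_{L^2(\mu,k\phi)}$, translate these into inclusions of balls, and then use the $2N_k$-homogeneity of Lebesgue measure to compare volumes. Your write-up is slightly more explicit about the ball inclusions and the scaling step, but the underlying argument is identical.
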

\begin{proof} If we set 
$$C_k:=\frac{1}{2k}\log\sup_K\rho(\mu,k\phi)$$ 
then we have 
$$\Vert s\Vert_{L^2(\mu,k\phi)}\le\Vert s\Vert_{L^\infty(K,k\phi)}\le e^{kC_k}\Vert s\Vert_{L^2(\mu,k\phi)}$$
 for all $k$ and all sections $s\in H^{0}(kL)$. Since the volume form $\vol_k$ is homogeneous of degree $2N_k=\dim_\R H^0(kL)$ on $H^0(kL)$ we get 
$$0\le\log\frac{\vol_k\cB^2(\mu,k\phi)}{\vol_k\cB^\infty(K,k\phi)}\le 2kN_k C_k$$
and the result follows by definition (\ref{equ:L-func}) and (\ref{equ:L-funcl2}) of the $\cL$-functionals. 
\end{proof}

\section{The Monge-Amp\`ere energy functional}\label{sec:energy}
In this section $L$ denotes a big line bundle on $X$. We have chosen to use the language of weights in this section since the rest of the article is naturally expressed in this language, but it is of course immediate to extend the results of this section (and Theorem B in particular) to the more general case of $\theta$-psh functions, where $\theta$ is a closed smooth $(1,1)$-form with big cohomology class.

\subsection{\label{sub:The-energy-functional}The energy functional on psh weights}
Let us first fix a psh weight $\psi_0$ with minimal singularities. As explained in the introduction, we define the Monge-Amp\`ere functional $\cE$ on psh weights with minimal singularities by the formula
$$\cE(\phi)=\frac{1}{n+1}\sum_{j=0}^n\int_X(\phi-\psi_0)\langle(dd^c\phi)^j\wedge(dd^c\psi_0^{n-j}).\rangle$$ 
This normalises $\cE$ by the condition $\cE(\psi_0)=0$. 

As in Section~\ref{sec:mixed}, the brackets denote non-pluripolar products. Concretely this means that the integrals are only extended over a Zariski open subset $\Omega$ of $X$ on which all psh weights are locally bounded, so that Bedford-Taylor wedge products are well-defined on $\Omega$. 

We now verify that $\cE$ remains a primitive of the Monge-Amp{\`e}re operator in our singular setting. 

\begin{prop}\label{prop:int} For any two psh weights $\phi_1,\phi_2$ with minimal singularities we have
$$\frac{d}{dt}_{t=0_+}\cE((1-t)\phi_1+t\phi_2)=\int_X(\phi_2-\phi_1)\MA(\phi_1).$$
\end{prop}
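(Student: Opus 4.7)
The plan is to reduce the statement to the symmetric formula (\ref{equ:energy}) and then use the continuity of mixed non-pluripolar products under uniform convergence. Set $\phi_t := (1-t)\phi_1 + t\phi_2 = \phi_1 + tv$ with $v := \phi_2 - \phi_1$; since $\phi_1$ and $\phi_2$ both have minimal singularities, $v$ is bounded on $X$ and $\phi_t$ itself has minimal singularities for all $t\in[0,1]$.

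The first step is to establish the \emph{cocycle formula}
$$\cE(\phi)-\cE(\psi)=\frac{1}{n+1}\sum_{j=0}^n\int_X(\phi-\psi)\,\langle(dd^c\phi)^j\wedge(dd^c\psi)^{n-j}\rangle$$
for any two psh weights $\phi,\psi$ with minimal singularities. By the definition of $\cE$, the left-hand side equals
$$\tfrac{1}{n+1}\sum_{j=0}^n\int_X(\phi-\psi_0)\langle(dd^c\phi)^j\wedge(dd^c\psi_0)^{n-j}\rangle-\tfrac{1}{n+1}\sum_{j=0}^n\int_X(\psi-\psi_0)\langle(dd^c\psi)^j\wedge(dd^c\psi_0)^{n-j}\rangle,$$
which involves only bounded differences of psh weights with minimal singularities integrated against closed positive currents of bidimension $(1,1)$ on the Zariski open subset $\Omega$. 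Integration by parts is therefore available via Theorem~\ref{thm:stokes}, and a telescoping argument (identical in form to the locally bounded case but performed on $\Omega$) eliminates $\psi_0$ and yields the claimed symmetric formula. This step is the main obstacle: one has to carefully match up terms after integration by parts, and verify that all the mixed non-pluripolar products appearing have finite mass, which is guaranteed by Theorem~\ref{thm:mass}.

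With the cocycle formula at hand, applying it to $(\phi_t,\phi_1)$ and using $\phi_t-\phi_1 = tv$ gives
$$\cE(\phi_t)-\cE(\phi_1)=\frac{t}{n+1}\sum_{j=0}^n\int_X v\,\langle(dd^c\phi_t)^j\wedge(dd^c\phi_1)^{n-j}\rangle.$$
As $t\to 0_+$ we have $\phi_t\to\phi_1$ \emph{uniformly} on $X$. Writing $v=(\phi_2-\psi_0)-(\phi_1-\psi_0)$ and applying the uniform-convergence case of the second continuity statement in Theorem~\ref{thm:continuous} to each piece, we conclude that for every $j$,
$$\int_X v\,\langle(dd^c\phi_t)^j\wedge(dd^c\phi_1)^{n-j}\rangle\;\longrightarrow\;\int_X v\,\MA(\phi_1).$$

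Summing over $j=0,\dots,n$ and dividing by $t$ produces
$$\frac{d}{dt}_{t=0_+}\cE(\phi_t)=\frac{1}{n+1}\cdot(n+1)\int_X v\,\MA(\phi_1)=\int_X(\phi_2-\phi_1)\MA(\phi_1),$$
which is the desired formula. As noted, the only genuine work is the first step, since once the symmetric identity (\ref{equ:energy}) is available in the singular setting, the differentiation is immediate from the continuity results of~\cite{BEGZ} already recorded as Theorem~\ref{thm:continuous}.
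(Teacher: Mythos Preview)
Your approach is correct but inverts the logical order chosen in the paper. The paper proves Proposition~\ref{prop:int} \emph{directly}: it expands $(n+1)\cE((1-t)\phi_1+t\phi_2)$ from the definition involving the fixed reference weight $\psi_0$, isolates the term linear in $t$, and then applies integration by parts (Theorem~\ref{thm:stokes}) once, together with an elementary telescoping identity, to collapse the linear coefficient to $(n+1)\int_\Omega u\,(dd^c\phi_1)^n$. The cocycle formula you invoke is Corollary~\ref{cor:cocycle} in the paper, and it is \emph{deduced} there from Proposition~\ref{prop:int} by integrating the derivative along the segment from $\psi$ to $\phi$. You instead want to establish the cocycle formula first, independently, and then obtain the derivative by dividing by $t$ and appealing to the uniform-convergence case of Theorem~\ref{thm:continuous}. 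This is legitimate---the cocycle identity can indeed be proved directly from the definition of $\cE$ by repeated use of Theorem~\ref{thm:stokes} on $\Omega$---and once it is in hand your limiting argument is clean. The trade-off is that the step you label ``the main obstacle'' and dismiss as ``identical in form to the locally bounded case'' is at least as much computational work as the paper's entire proof of the proposition; you have merely asserted it rather than carried it out. The paper's route is slightly more economical: one integration-by-parts computation yields the proposition, and the cocycle formula then falls out for free.
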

\begin{proof} The function $u:=\phi_2-\phi_1$ is bounded. We compute
$$(n+1)\cE((1-t)\phi_1+t\phi_2)=\int_{\Omega}(\phi_1-\psi_0+t u)\sum_{j=0}^{n}((1-t)dd^{c}\phi_1+t dd^{c}\phi_2)^j\wedge(dd^{c}\psi_0)^{n-j}$$
$$=\int_\Omega(\phi_1-\psi_0)\sum_{j=0}^n(dd^c\phi_1)^j\wedge(dd^c\psi_0)^{n-j}$$
$$+t\int_\Omega u\sum_{j=0}^n(dd^c\phi_1)^j\wedge(dd^c\psi_0)^{n-j}$$
$$+t\int_\Omega(\phi_1-\psi_0)\sum_{j=1}^nj(dd^c\phi_1)^{j-1}\wedge dd^c u\wedge(dd^c\psi_0)^{n-j}+O(t^2).$$
By integration-by-parts (Theorem~\ref{thm:stokes}) we have 
$$\int_{\Omega}(\phi_1-\psi_0)\sum_{j=1}^{n}j(dd^{c}\phi_1)^{j-1}\wedge dd^{c}u\wedge(dd^{c}\psi_0)^{n-j}=\int_{\Omega}u\,dd^{c}(\phi_1-\psi_0)\sum_{j=1}^{n}j(dd^{c}\phi_1)^{j-1}\wedge(dd^{c}\psi_0)^{n-j}$$
$$=\int_{\Omega}u\sum_{j=1}^{n}j(dd^{c}\phi_1)^{j}\wedge(dd^{c}\psi_0)^{n-j}-\int_\Omega u\sum_{j=0}^{n-1}(j+1)(dd^{c}\phi_1)^{j}\wedge(dd^{c}\psi_0)^{n-j}.$$
 Now 
$$\sum_{j=0}^{n}(dd^{c}\phi_1)^{j}\wedge(dd^{c}\psi_0)^{n-j}+\sum_{j=1}^{n}j(dd^{c}\phi_1)^{j}\wedge(dd^{c}\psi_0)^{n-j}-\sum_{j=0}^{n-1}(j+1)(dd^{c}\phi_1)^{j}\wedge(dd^{c}\psi_0)^{n-j}$$
$$=(dd^{c}\phi_1)^{n}+n(dd^{c}\phi_1)^{n}$$
 It follows that 
 $$\cE((1-t)\phi_1+t\phi_2)=\cE(\phi_1)+t\int_{\Omega}u(dd^{c}\phi_1)^{n}+O(t^2)$$
as desired.  
\end{proof}

As a consequence, we see that (\ref{equ:energy}) always holds, that is:
\begin{cor}\label{cor:cocycle} 
For any two psh weights with minimal singularities $\phi,\psi$ we have
$$\cE(\phi)-\cE(\psi)=\frac{1}{n+1}\sum_{j=0}^n\int_X(\phi-\psi)\langle(dd^c\phi)^j\wedge(dd^c\psi)^{n-j}\rangle.$$
\end{cor}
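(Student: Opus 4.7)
The plan is to integrate the directional-derivative formula of Proposition~\ref{prop:int} along the line segment $\phi_t:=(1-t)\psi+t\phi$, $t\in[0,1]$. Each $\phi_t$ is a psh weight with minimal singularities, since this class is stable under convex combinations, so both $\cE(\phi_t)$ and the non-pluripolar Monge-Amp\`ere measures $\MA(\phi_t)$ are defined.

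First I would upgrade Proposition~\ref{prop:int} to give a two-sided derivative of $\cE(\phi_t)$ at every interior point $s\in(0,1)$. For $s<1$ and $t\in[s,1]$ one may write $\phi_t=(1-\lambda)\phi_s+\lambda\phi$ with $\lambda=(t-s)/(1-s)$; applying Proposition~\ref{prop:int} to the pair $(\phi_s,\phi)$ together with the chain rule yields
\[
\frac{d}{dt}\Big|_{t=s_+}\cE(\phi_t)=\int_X(\phi-\psi)\,\MA(\phi_s).
\]
The left derivative at $s>0$ is obtained symmetrically via $\phi_t=(1-\mu)\phi_s+\mu\psi$ with $\mu=(s-t)/s$. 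Since $\phi_t\to\phi_s$ uniformly on $X$ as $t\to s$, the third case of Theorem~\ref{thm:continuous} implies that $t\mapsto\int_X(\phi-\psi)\MA(\phi_t)$ is continuous, so $t\mapsto\cE(\phi_t)$ is $C^1$ on $[0,1]$ and the fundamental theorem of calculus gives
\[
\cE(\phi)-\cE(\psi)=\int_0^1\int_X(\phi-\psi)\,\MA(\phi_t)\,dt.
\]

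Next I would expand $\MA(\phi_t)$ by multilinearity. On the Zariski open subset $\Omega$ where all psh weights with minimal singularities are locally bounded, multilinearity of Bedford-Taylor products gives
\[
(dd^c\phi_t)^n=\sum_{j=0}^n\binom{n}{j}t^j(1-t)^{n-j}(dd^c\phi)^j\wedge(dd^c\psi)^{n-j}
\]
on $\Omega$, and since non-pluripolar products are by definition the trivial extensions of their restrictions to $\Omega$, the same expansion holds for $\langle(dd^c\phi_t)^n\rangle$ on $X$. Fubini---justified because $\phi-\psi$ is bounded and each $\MA(\phi_t)$ has total mass $\vol(L)$---combined with the elementary identity
\[
\int_0^1\binom{n}{j}t^j(1-t)^{n-j}\,dt=\binom{n}{j}\frac{j!(n-j)!}{(n+1)!}=\frac{1}{n+1}
\]
then produces the claimed formula.

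The only genuinely delicate point is the multilinear expansion of the non-pluripolar Monge-Amp\`ere product; once one restricts to $\Omega$ this is a standard property of Bedford-Taylor products of locally bounded psh weights, and the extension to $X$ is automatic from the definition of the non-pluripolar product as a trivial extension. Everything else is routine once Proposition~\ref{prop:int} and Theorem~\ref{thm:continuous} are in hand.
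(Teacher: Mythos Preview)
Your argument is correct, but it is not the route the paper takes. You compute $\cE(\phi)-\cE(\psi)$ by integrating the derivative formula of Proposition~\ref{prop:int} along the segment, then expand $\MA(\phi_t)$ binomially and evaluate the resulting Beta integrals $\int_0^1\binom{n}{j}t^j(1-t)^{n-j}\,dt=\frac{1}{n+1}$. The paper instead observes that the right-hand side of the corollary is precisely the defining formula for $\cE$ with the reference weight $\psi_0$ replaced by $\psi$; calling this $\cF(\phi)$, the \emph{proof} of Proposition~\ref{prop:int} (which is insensitive to the choice of reference weight) shows that $\cF$ and $\cE$ have the same directional derivative along the segment $(1-t)\phi+t\psi$, and since $\cF(\psi)=0=\cE(\psi)-\cE(\psi)$ one concludes $\cF(\phi)=\cE(\phi)-\cE(\psi)$. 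The paper's argument is shorter and more conceptual---it identifies the two sides as two normalizations of the same primitive of $\MA$---whereas yours is more self-contained, using only the \emph{statement} of Proposition~\ref{prop:int} together with the continuity from Theorem~\ref{thm:continuous}, at the cost of the explicit binomial computation.
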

\begin{proof} We fix $\psi$ and temporarily denote by $\cF(\phi)$ the right-hand side expression. We can then apply Proposition~\ref{prop:int} with $\psi$ in place of $\phi_0$ to get 
$$\frac{d}{dt}\cF((1-t)\phi+t\psi)=\int_X(\psi-\phi)\MA((1-t)\phi+t\psi)=\frac{d}{dt}\cE((1-t)\phi+t\psi),$$
and the result follows since $\cF(\cdot)$ and $\cE(\cdot)-\cE(\psi)$ both vanish at $\psi$. 
\end{proof}

\subsection{General properties of the energy}
Theorem~\ref{thm:continuous} implies the following
continuity properties of the energy:

\begin{prop}
\label{prop:cont_en} The map $\phi\mapsto\cE(\phi)$ is continuous along converging sequences $\phi_j\to\phi$ of psh weights with minimal singularities in the following three cases.
\begin{itemize} 
\item $\phi_j$ decreases to $\phi$ pointwise.
\item $\phi_j$ increases to $\phi$ a.e. for the Lebesgue measure.
\item $\phi_j$ converges to $\phi$ uniformly on $X$. 
\end{itemize}
\end{prop}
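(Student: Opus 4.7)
The plan is to reduce everything to the cocycle identity of Corollary~\ref{cor:cocycle} and then invoke the continuity of the non-pluripolar mixed Monge-Amp\`ere operator provided by Theorem~\ref{thm:continuous}. Specifically, for any two psh weights $\phi,\phi'$ with minimal singularities we have
$$\cE(\phi')-\cE(\phi)=\frac{1}{n+1}\sum_{j=0}^n\int_X(\phi'-\phi)\,\langle(dd^c\phi')^j\wedge(dd^c\phi)^{n-j}\rangle,$$
so it is enough to show, in each of the three convergence regimes, that every individual term in the sum tends to $0$ when $\phi'=\phi_k\to\phi$.

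To verify this term by term, I would fix an auxiliary psh weight with minimal singularities (for convenience, $\psi_0$), write $\phi_k-\phi=(\phi_k-\psi_0)-(\phi-\psi_0)$, and split each summand as
$$\int_X(\phi_k-\psi_0)\,\langle(dd^c\phi_k)^j\wedge(dd^c\phi)^{n-j}\rangle\;-\;\int_X(\phi-\psi_0)\,\langle(dd^c\phi_k)^j\wedge(dd^c\phi)^{n-j}\rangle.$$
Both expressions are of the form $\int_X (\phi_0^{(k)}-\psi_0)\,\langle dd^c\phi_1^{(k)}\wedge\cdots\wedge dd^c\phi_n^{(k)}\rangle$ for which Theorem~\ref{thm:continuous} asserts continuity under any of the three types of monotonic or uniform convergence (in the first integral, the leading factor varies with $k$ and $j$ of the currents vary; in the second, only the currents vary). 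Since $X$ is compact, weak convergence of these bounded signed measures against the constant function $1$ gives convergence of the total masses; both integrals therefore tend to $\int_X(\phi-\psi_0)\,\langle(dd^c\phi)^n\rangle$, and their difference tends to $0$.

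The only non-trivial point to check is that the hypotheses of Theorem~\ref{thm:continuous} apply. This requires $\phi_k$ and $\phi$ to all be psh weights with minimal singularities, which is part of our assumption, so that the local boundedness on a common Zariski open subset $\Omega$ is automatic and the non-pluripolar products are well-defined throughout. For the decreasing and uniform cases this is immediate; for the increasing-a.e. case, the upper envelope $(\sup_k\phi_k)^*$ equals $\phi$ a.e., so the sequence satisfies the monotonicity assumption of Theorem~\ref{thm:continuous} in the appropriate sense.

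The main conceptual obstacle \emph{would have been} the continuity of the non-pluripolar Monge-Amp\`ere operator itself, but this is exactly what Theorem~\ref{thm:continuous} provides, so the argument here reduces to a purely formal manipulation of the cocycle identity. No additional analytic input is needed.
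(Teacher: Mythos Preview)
Your proof is correct and follows essentially the same approach as the paper, which simply records the proposition as an immediate consequence of Theorem~\ref{thm:continuous}. One minor simplification: rather than invoking the cocycle identity (Corollary~\ref{cor:cocycle}) and then splitting $\phi_k-\phi=(\phi_k-\psi_0)-(\phi-\psi_0)$, you can apply Theorem~\ref{thm:continuous} directly to the defining expression
$$\cE(\phi_k)=\frac{1}{n+1}\sum_{j=0}^n\int_X(\phi_k-\psi_0)\,\langle(dd^c\phi_k)^j\wedge(dd^c\psi_0)^{n-j}\rangle,$$
each summand of which is already of the exact form covered by the second operator in Theorem~\ref{thm:continuous} (with the $\psi_0$-factors playing the role of constant sequences); this avoids the detour through $\phi$ in the Monge--Amp\`ere products.
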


\begin{prop}\label{prop:concave} The map $\phi\mapsto\cE(\phi)$ is non-decreasing
and concave on psh weights with minimal singularities. 
\end{prop}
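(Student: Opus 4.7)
The plan is to reduce both assertions to the first-variation formula of Proposition~\ref{prop:int} along affine segments in the space of psh weights with minimal singularities. Given two such weights $\phi_1, \phi_2$, set $\phi_t := (1-t)\phi_1 + t\phi_2$ for $t \in [0,1]$; since convex combinations preserve both plurisubharmonicity and the $O(1)$ bound defining minimal singularities, $f(t) := \cE(\phi_t)$ is a well-defined function on $[0,1]$.

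Monotonicity falls out immediately from the cocycle identity of Corollary~\ref{cor:cocycle}: if $\phi_1 \le \phi_2$, then
$$\cE(\phi_2) - \cE(\phi_1) = \frac{1}{n+1}\sum_{j=0}^n \int_X (\phi_2-\phi_1)\,\langle(dd^c\phi_1)^j\wedge(dd^c\phi_2)^{n-j}\rangle \ge 0,$$
since each summand integrates a non-negative function against a positive measure.

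For concavity I would compute $f'(t)$ by applying Proposition~\ref{prop:int} at base point $\phi_t$ with direction $\phi_2 - \phi_1$, yielding $f'(t) = \int_X (\phi_2-\phi_1)\,\MA(\phi_t)$. Formally differentiating once more, and expanding $(dd^c\phi_t)^n$ by multilinearity of the non-pluripolar product in its arguments, gives
$$f''(t) = n\int_\Omega u\,dd^c u \wedge (dd^c\phi_t)^{n-1}, \qquad u := \phi_2-\phi_1,$$
on the Zariski open subset $\Omega$ where weights of minimal singularities are locally bounded. I would then apply the integration-by-parts Theorem~\ref{thm:stokes} --- $u$ is bounded and a difference of quasi-psh functions locally bounded on $\Omega$, while $(dd^c\phi_t)^{n-1}$ is a closed positive current of bidimension $(1,1)$ --- to rewrite
$$f''(t) = -n\int_\Omega du\wedge d^c u\wedge (dd^c\phi_t)^{n-1} \le 0,$$
the sign coming from the fact that $du\wedge d^c u = \tfrac{i}{\pi}\partial u\wedge\bar\partial u$ is a positive $(1,1)$-form for real $u$, so its wedge with a positive current of bidimension $(1,1)$ is a positive measure. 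Hence $f$ is concave on $[0,1]$, which is exactly the concavity statement.

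The main obstacle is the rigorous justification of the second differentiation: one must verify that the map $t \mapsto (dd^c\phi_t)^{n-1}$ is weakly differentiable with the expected derivative inside the non-pluripolar framework of Section~\ref{sec:mixed}. A safer fallback, should the direct argument prove too delicate, is to first establish both statements for smooth psh weights by the classical computation and then extend to arbitrary psh weights with minimal singularities by decreasing approximations via Proposition~\ref{prop:approx}, invoking the monotone continuity of $\cE$ from Proposition~\ref{prop:cont_en}. Both monotonicity and concavity are preserved under pointwise decreasing limits, so the extension is automatic.
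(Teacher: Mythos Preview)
Your proposal is correct and follows essentially the same approach as the paper: monotonicity via Corollary~\ref{cor:cocycle}, concavity via the second variation computed from Proposition~\ref{prop:int} and the integration-by-parts of Theorem~\ref{thm:stokes}. Regarding your worry about justifying the second differentiation, note that by multilinearity $t\mapsto\int_\Omega u\,(dd^c\phi_t)^n$ expands as a polynomial in $t$ with coefficients $\int_\Omega u\,\langle(dd^c\phi_1)^j\wedge(dd^c\phi_2)^{n-j}\rangle$, so $f'$ is a genuine polynomial and the formal second derivative is rigorous without any approximation argument; the paper proceeds with the same direct computation and does not invoke your fallback.
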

\begin{proof} The first point follows from Corollary~\ref{cor:cocycle}. To prove concavity, let $\phi_1,\phi_2$ be two psh weights
with minimal singularities and set 
$$g(t):=\cE(t\phi_1+(1-t)\phi_2).$$ 
By Proposition~\ref{prop:int}, we have
$$g'(t)=\int_{X}u\,\MA((1-t)\phi_1+t\phi_2)$$
with $u:=\phi_2-\phi_1$. Computing
the second derivative yields 
$$g''(t)=n\int_{\Omega}u\,dd^{c}u\wedge((1-t)dd^{c}\phi_1+tdd^c\phi_2))^{n-1}$$
$$=-n\int_\Omega du\wedge d^cu\wedge((1-t)dd^c\phi_1+tdd^c\phi_2)^{n-1}\le 0$$
by Theorem~\ref{thm:stokes} again, and the proof is complete. 
\end{proof}

\begin{rem}
\label{rem:geodes} More generally one can consider variations along
a 1-parameter family $\phi_{t}$ (with $t$ in the unit-disc $\Delta$
in $\C$) of weights on $L$ with minimal singularities. Under suitable
regularity assumptions on $(t,x)\mapsto\phi_t(x)$ a simple modification of the
previous proof yields 
\begin{equation}dd^c_t\cE(\phi_{t})=\int_{x\in X}(dd^c_{(x,t)}\phi_t(x))^{n+1},\label{eq:prop energy is conv}\end{equation}
In the smooth case at least, this formula is well-known
in K{\"a}hler geometry. When $L$ is ample the operator that
maps a curve $\phi_{t}$ of smooth strictly psh weights to the Monge-Amp{\`e}re
measure $(dd^{c}_{(x,t)}\phi_t(x))^{n+1}$
may be identified with the geodesic curvature of the curve $dd^{c}_x\phi_{t}$
in the space of all K{\"a}hler metrics $\mathcal{K}(X,L)$ on $X$
lying in the first Chern class $c_{1}(L).$ The geodesic curvature
is defined with respect to the Riemannian metric on $\mathcal{K}(X,L)$
naturally defined at $\phi$ by taking $L^{2}$ norms of tangent vectors
with respect to the volume form $(dd^c_x\phi)^n$ \cite{Che}. Formula (\ref{eq:prop energy is conv})
thus shows that $\mathcal{E}$ is \emph{affine} along geodesic segments in $\cK(X,L)$. 

It is also interesting to note that 
$$\frac{k^{n+1}}{(n+1)!}\int_X(dd^c_{(x,t)}\phi)^{n+1}$$
is the leading term of the $(1,1)$-part of the pushed-forward form
$$\int_X\mathrm{ch}_{X\times\Delta}(kL,k\phi)\mathrm{td}(T_X,\omega),$$
which coincides with the curvature of the Quillen metric on $\det
H^\bullet(kL)$ by the main result of~\cite{BGS} (see also~\cite{Sou},
Theorem 4 p.132).  
\end{rem}

\begin{rem}\label{rem:extend} As a consequence of Proposition~\ref{prop:concave}, we may extend as in~\cite{BEGZ} the definition of $\cE(\phi)$ to an \emph{arbitrary} psh weight on $L$ as follows:
$$\cE(\phi)=\inf_{\psi\ge\phi}\cE(\psi)\in[-\infty,+\infty[$$ 
for $\psi$ ranging over all psh weights with minimal singularities such that $\psi\ge\phi$. It is straightforward to see that $\phi\mapsto\cE(\phi)$ so defined remains non-decreasing and concave on all psh weights. It is shown in~\cite{BEGZ} that it is also upper semi-continuous in the weak topology and that Corollary~\ref{cor:cocycle} remains true if both $\cE(\phi)$ and $\cE(\psi)$ are finite. 
\end{rem}

The following result relates the Monge-Amp\`ere energy $\cE_X$ on $X$ to the energy $\cE_Y$ on a hypersurface $Y$ of $X$. We assume here that $L$ is ample and $Y$ is smooth for simplicity. 
\begin{prop}\label{prop:recursion} Let $L$ be an ample line bundle, and assume that $Y$ is a smooth hypersurface of $X$ cut out by a section $s\in H^0(X,L)$. If $\phi,\psi$ are locally bounded psh weights on $L$ then we have 
$$
n\left(\cE_{Y}(\phi|_{Y})-\cE_Y(\psi|_{Y})\right)=(n+1)\left(\cE_{X}(\phi)-\cE_X(\psi)\right)+\int_{X}\log|s|_\phi\MA(\phi)-\int_X\log|s|_\psi\MA(\psi).
$$
\end{prop}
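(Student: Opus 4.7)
The plan is to prove the identity by differentiating both sides along the line segment $\phi_t := \psi + t(\phi-\psi)$, $t\in[0,1]$, and verifying that the derivatives coincide; both sides obviously vanish at $t=0$ (where $\phi_t = \psi$), so this will yield the result at $t=1$. Write $u := \phi - \psi$ (a bounded function on $X$ since $L$ is ample and both weights are locally bounded psh), and denote the left-hand side by $F(t)$ and the right-hand side by $G(t)$ with $\phi$ replaced by $\phi_t$ throughout.

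By Proposition~\ref{prop:int} applied on the hypersurface $Y$ (which is smooth and on which $L|_Y$ is ample, $\phi_t|_Y$ being locally bounded psh), the derivative of the left-hand side is
\begin{equation*}
F'(t) = n\int_Y u\,\MA_Y(\phi_t|_Y) = n\int_Y u\,(dd^c\phi_t|_Y)^{n-1}.
\end{equation*}
For the right-hand side, the first term again differentiates by Proposition~\ref{prop:int} to $(n+1)\int_X u\,\MA(\phi_t)$. For the second term, using $\log|s|_{\phi_t} = \log|s| - \phi_t$ and hence $\tfrac{d}{dt}\log|s|_{\phi_t} = -u$, one obtains
\begin{equation*}
\frac{d}{dt}\int_X \log|s|_{\phi_t}\MA(\phi_t) = -\int_X u\,\MA(\phi_t) + n\int_X \log|s|_{\phi_t}\,dd^c u\wedge(dd^c\phi_t)^{n-1}.
\end{equation*}

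The decisive step is to integrate by parts in the last integral, transferring $dd^c$ from $u$ to $\log|s|_{\phi_t}$. By the Lelong-Poincar\'e formula $dd^c\log|s| = [Y]$, so formally $dd^c\log|s|_{\phi_t} = [Y] - dd^c\phi_t$, giving
\begin{equation*}
n\int_X \log|s|_{\phi_t}\,dd^c u\wedge(dd^c\phi_t)^{n-1} = n\int_Y u\,(dd^c\phi_t|_Y)^{n-1} - n\int_X u\,\MA(\phi_t).
\end{equation*}
Combining, $G'(t) = (n+1)\int_X u\,\MA(\phi_t) - (n+1)\int_X u\,\MA(\phi_t) + n\int_Y u\,\MA_Y(\phi_t|_Y) = F'(t)$, as required.

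The main obstacle is justifying the integration by parts, since $\log|s|_{\phi_t}$ is not bounded but has logarithmic poles along $Y$, so Theorem~\ref{thm:stokes} does not apply directly. I would handle this by the standard regularisation $\chi_\varepsilon := \tfrac{1}{2}\log(|s|^2 + \varepsilon^2) - \phi_t$, which is bounded (in fact continuous), decreases to $\log|s|_{\phi_t}$ as $\varepsilon\to 0$, and satisfies $dd^c\chi_\varepsilon = \tfrac{1}{2}dd^c\log(|s|^2+\varepsilon^2) - dd^c\phi_t$ with the first term a smooth closed $(1,1)$-form converging weakly to the integration current $[Y]$. Applying Theorem~\ref{thm:stokes} to $\chi_\varepsilon$ (which is a difference of bounded quasi-psh functions) yields the identity at level $\varepsilon$; one then passes to the limit using monotone convergence for $\chi_\varepsilon \searrow \log|s|_{\phi_t}$ against the positive measure $dd^c u\wedge(dd^c\phi_t)^{n-1}$ on the left, and the Bedford-Taylor continuity together with the fact that $\tfrac12 dd^c\log(|s|^2+\varepsilon^2)\wedge(dd^c\phi_t)^{n-1}\to [Y]\wedge(dd^c\phi_t)^{n-1} = (dd^c\phi_t|_Y)^{n-1}$ on the right. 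The justification for differentiating $\int_X \log|s|_{\phi_t}\MA(\phi_t)$ under the integral sign in $t$ is itself obtained by the same regularisation, reducing to the bounded case where Proposition~\ref{prop:int}-type computations apply directly.
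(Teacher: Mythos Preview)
Your proof is correct and close in spirit to the paper's, but organized differently. The paper works directly with the Bott--Chern identity
\[
(dd^c\phi)^n-(dd^c\psi)^n=dd^c\Bigl((\phi-\psi)\sum_{j=0}^{n-1}(dd^c\phi)^j\wedge(dd^c\psi)^{n-1-j}\Bigr),
\]
multiplies it by the regularised function $u_\e:=\log(|s|_\phi+\e)$, integrates by parts once, and lets $\e\to 0$ so that $dd^c(u_\e+\phi)\to[Y]$ by Lelong--Poincar\'e. This yields the recursion in one stroke, without ever introducing the parameter $t$. Your route---differentiating both sides along the segment $\phi_t$ and matching $F'(t)=G'(t)$---is the infinitesimal version of the same computation: the Bott--Chern identity is precisely what underlies both Proposition~\ref{prop:int} and your formula for $\tfrac{d}{dt}\MA(\phi_t)$. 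The paper's argument is slightly more economical since it avoids having to justify differentiation of $t\mapsto\int_X\log|s|_{\phi_t}\MA(\phi_t)$ separately; conversely, your approach makes the structure (derivative on $Y$ versus derivative on $X$) more transparent.

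One small slip: $dd^cu\wedge(dd^c\phi_t)^{n-1}$ is \emph{not} a positive measure, since $u=\phi-\psi$ is only a difference of psh weights. The monotone convergence step for $\chi_\e\searrow\log|s|_{\phi_t}$ therefore needs to be applied separately to the two positive pieces $dd^c\phi\wedge(dd^c\phi_t)^{n-1}$ and $dd^c\psi\wedge(dd^c\phi_t)^{n-1}$, after checking that $\log|s|$ is integrable against each (which follows from standard Chern--Levine--Nirenberg type estimates in the locally bounded setting). This is a routine fix and does not affect the argument.
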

\begin{proof} Consider the following simple algebraic formula
\begin{equation}
(dd^{c}\phi)^{n}-(dd^{c}\psi)^{n}=dd^{c}((\phi-\psi)\sum_{j=0}^{n-1}(dd^{c}\phi)^{j}\wedge(dd^{c}\psi)^{n-1-j}).\label{equ:bott-chern}\end{equation}
From the point of view of Bott-Chern secondary characteristic
classes, it may be interpreted as a double transgression formula (compare
\cite{Don0,Sou}). At any rate, multiplying (\ref{equ:bott-chern}) by 
$$u_\e:=\log(|s|_\phi+\e)$$
and using integration by parts gives 
$$\int_{X}u_\e(dd^{c}\phi)^{n}-u_\e(dd^{c}\psi)^{n}+(dd^{c}\phi)\wedge(\phi-\psi)\sum_{j=0}^{n-1}(dd^{c}\phi)^{j}\wedge(dd^{c}\psi)^{n-1-j}$$
$$=\int_{X}(\phi-\psi)dd^{c}(u_\e+\phi)\wedge\sum_{j=0}^{n-1}(dd^{c}\phi)^{j}\wedge(dd^{c}\psi)^{n-1-j}.$$
Now $u_\e+\phi$ decreases to $\log|s|$ as $\e\to 0$ and $dd^c(u_\e+\phi)$ converges to the integration current $[Y]$ by the Lelong-Poincar\'e formula, and we get
$$\int_X\log|s|_\phi\MA(\phi)-\int_X\log|s|_\psi\MA(\psi)+(n+1)(\cE_X(\phi)-\cE_X(\psi))=n(\cE_Y(\phi|_Y)-\cE_Y(\psi|_Y)$$
as desired.
\end{proof}
The following pull-back formula is straightforward using Proposition~\ref{prop:pullback}. 
\begin{prop}
\label{prop:pull-back_en} Let $\pi:Y\to X$ be a surjective morphism between compact complex manifolds of same dimension $n$ and denote by $e$ its (topological) degree. Let $L$ be a big line bundle on $X$, and let $\phi,\psi$ be two psh weights with minimal singularities on $L$. Then we have
$$\mathcal{E}_{Y}(\pi^{*}\phi)-\cE_Y(\pi^{*}\psi)=e\left(\cE_{X}(\phi)-\cE_X(\psi)\right).$$
\end{prop}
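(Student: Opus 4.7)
The plan is to combine the cocycle formula for $\cE$ stated in Corollary~\ref{cor:cocycle} with the elementary observation that integration against pullbacks of non-pluripolar products picks up a factor of the topological degree $e$. Applying Corollary~\ref{cor:cocycle} to the two weights $\pi^{*}\phi$ and $\pi^{*}\psi$ on $\pi^{*}L$ (which is big by Proposition~\ref{prop:pullback}), the claim reduces to establishing, for each $0\le j\le n$, the identity
$$\int_Y\pi^{*}(\phi-\psi)\,\langle(dd^c\pi^{*}\phi)^j\wedge(dd^c\pi^{*}\psi)^{n-j}\rangle=e\int_X(\phi-\psi)\,\langle(dd^c\phi)^j\wedge(dd^c\psi)^{n-j}\rangle.$$

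A preliminary step is to verify that $\pi^{*}\phi$ and $\pi^{*}\psi$ really do have minimal singularities on $\pi^{*}L$, so that the cocycle formula is applicable on $Y$. This is obtained by the same sup-of-fibres construction used in the proof of Proposition~\ref{prop:pullback}: given any psh weight $\sigma$ on $\pi^{*}L$ and a fixed smooth reference weight $\phi_0$ on $L$, the function $u(x):=\max_{y\in\pi^{-1}(x)}(\sigma-\pi^{*}\phi_0)(y)$ is $dd^c\phi_0$-psh on $X$, so $\phi_0+u$ is a psh weight on $L$ and the minimality of $\phi$ yields $\sigma\le\pi^{*}\phi+O(1)$ globally on $Y$ (and similarly for $\psi$).

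For the main identity I would let $U\subset X$ be the Zariski open locus over which $\pi$ is \'etale of degree $e$, and let $\Omega\subset X$ be a Zariski open locus on which both $\phi$ and $\psi$ are locally bounded. On the Zariski open subset $\pi^{-1}(U\cap\Omega)\subset Y$ the map $\pi$ is a local biholomorphism, which allows one to identify $(dd^c\pi^{*}\phi)^j\wedge(dd^c\pi^{*}\psi)^{n-j}$, as a Bedford--Taylor product of locally bounded psh weights, with the pullback of $(dd^c\phi)^j\wedge(dd^c\psi)^{n-j}$. Since non-pluripolar products put no mass on pluripolar sets (in particular on the proper analytic subsets $X\setminus(U\cap\Omega)$ and $Y\setminus\pi^{-1}(U\cap\Omega)$), the integrals over $X$ and $Y$ reduce to integrals over $U\cap\Omega$ and $\pi^{-1}(U\cap\Omega)$, and a straightforward change of variables on the finite \'etale cover of degree $e$ yields the desired factor of $e$.

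The main obstacle lies in this last step: identifying the non-pluripolar Bedford--Taylor product on $Y$ with the pullback of the one on $X$. This requires that pullback by a local biholomorphism commutes with the Bedford--Taylor wedge product of locally bounded psh weights --- a purely local fact --- but globally one must carefully exclude the pluripolar loci where either $\phi,\psi$ may fail to be locally bounded or $\pi$ may fail to be \'etale. Once this is handled via the systematic use of the no-mass-on-pluripolar-sets property, the rest is simply the degree formula for finite \'etale morphisms, which is why the author rightly calls the proof straightforward.
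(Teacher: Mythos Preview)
Your proposal is correct and is precisely the kind of argument the paper has in mind: the text offers no detailed proof, merely stating that the formula ``is straightforward using Proposition~\ref{prop:pullback}'', and you have filled in exactly those details --- using the sup-of-fibres construction from that proposition to get minimal singularities of $\pi^*\phi,\pi^*\psi$, and then the cocycle formula together with the degree-$e$ change of variables over the \'etale locus.
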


\subsection{Proof of Theorem B}\label{sec:proof thmB}
In this section we prove Theorem B. Let thus $K$ be a non-pluripolar compact subset of $X$. 
We first prove that 
$$\phi\mapsto\eneq(K,\phi)=\vol(L)^{-1}\cE\circ P_K(\phi)$$ 
is concave and continuous. Concavity is an immediate consequence of Proposition~\ref{prop:concave}: for any weights $\phi_1,\phi_2$ on $L|_K$ we have
$$P_K((1-t)\phi_1+t\phi_2)\ge(1-t)P_K\phi_1+tP_K\phi_2$$
by concavity of $P_K$ (Lemma~\ref{lem:projection}) hence
$$\cE\left(P_K((1-t)\phi_1+t\phi_2)\right)\ge\cE\left((1-t)P_K\phi_1+tP_K\phi_2\right)$$
(since $\cE$ is non-decreasing)
$$\ge(1-t)\cE(P_K\phi_1)+t\cE(P_K\phi_2)$$
(since $\cE$ is concave). Continuity of $\phi\mapsto\eneq(K,\phi)$ follows from Lemma~\ref{lem:projection} and the third case of Proposition~\ref{prop:cont_en}. 

Given a continuous weight $\phi$ on $L|_K$ and $u\in C^0(K)$, the concave function $\cE\circ P_K$ 
admits a directional derivative at $\phi$ in the direction $u$, and our goal is to show that it is given by
$$\frac{d}{dt}_{t=0_+}\cE\circ P_K(\phi+tu)=\langle\lambda,u\rangle$$
where $\lambda$ is the linear form on $C^0(K)$ defined by 
$$\langle\lambda,u\rangle=\int_Ku\,\MA(P_K\phi).$$
Note that $\lambda$ computes the directional derivatives of $\cE$ at $P_K\phi$ according to Proposition~\ref{prop:int}. 

As a preliminary remark, we show:
\begin{lem}\label{lem:reg} In order to prove Theorem B one may assume that $u$ is a $C^\infty$ function on $X$.
\end{lem}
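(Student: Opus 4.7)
The plan is to approximate the continuous function $u\in C^0(K)$ uniformly by $C^\infty$ functions on $X$ and to propagate this approximation through $P_K$ and $\cE$ using quantitative estimates. First I extend $u$ to a continuous function on $X$ by the Tietze extension theorem, then convolve (in local charts, via a partition of unity) to produce a sequence $u_\e\in C^\infty(X)$ with $\sup_X|u-u_\e|\le\e$, which in particular gives $\sup_K|u-u_\e|\le\e$.

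The key inputs are (a) the $1$-Lipschitz property of $P_K$ from Lemma~\ref{lem:projection} and (b) the scaling identity $\cE(\psi+c)-\cE(\psi)=c\,\vol(L)$ for a real constant $c$, which follows from Corollary~\ref{cor:cocycle} together with Theorem~\ref{thm:mass}. Combined with monotonicity of $\cE$, they yield, for every $t\ge 0$,
$$\bigl|\cE(P_K(\phi+tu))-\cE(P_K(\phi+tu_\e))\bigr|\le t\e\,\vol(L).$$
Set $F(t):=\cE(P_K(\phi+tu))$ and $F_\e(t):=\cE(P_K(\phi+tu_\e))$. Both functions are concave in $t$ (composition of the concave, non-decreasing map $\cE$ with the concave map $P_K$ of Lemma~\ref{lem:projection}) and finite-valued (by continuity of $\phi\mapsto\eneq(K,\phi)$, itself a consequence of Lemma~\ref{lem:projection} and the uniform case of Proposition~\ref{prop:cont_en}), so their right derivatives at $0$ exist as real numbers.

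Because $F(0)=F_\e(0)$, dividing the displayed inequality by $t>0$ and letting $t\to 0_+$ gives
$$\bigl|F'(0_+)-F_\e'(0_+)\bigr|\le\e\,\vol(L).$$
Assuming Theorem B is known for the smooth test function $u_\e$, we have $F_\e'(0_+)=\int_X u_\e\,\MA(P_K\phi)$. Since $\MA(P_K\phi)$ is concentrated on $K$ by Proposition~\ref{prop:support} and has total mass $\vol(L)$, and since $u_\e\to u$ uniformly on $K$, we obtain $F_\e'(0_+)\to\int_K u\,\MA(P_K\phi)$ as $\e\to 0$. Passing to the limit in the inequality above identifies $F'(0_+)$ with $\int_K u\,\MA(P_K\phi)$, which is exactly the directional derivative claimed in Theorem B for the original $u$.

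The only step that requires any care is the estimate $|F(t)-F_\e(t)|\le t\e\,\vol(L)$; once this is in place, the approximation argument is routine, so the substance of Theorem B really lies in the case of smooth $u$, as the lemma asserts. I do not expect a genuine obstacle here — the main point is to record explicitly that the normalisation $\eneq(K,\phi+c)=\eneq(K,\phi)+c$ and the $1$-Lipschitz property of $P_K$ combine to make the reduction completely quantitative.
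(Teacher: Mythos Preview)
Your argument is correct and in fact slightly more economical than the paper's. The paper rewrites Theorem B in integral form,
$$\cE\circ P_K(\phi+u)-\cE\circ P_K(\phi)=\int_0^1\!\!\int_K u\,\MA(P_K(\phi+tu))\,dt,$$
and then passes to the limit along a sequence $u_j\to u$ uniform on $K$ on \emph{both} sides. The left-hand side is handled exactly as you do (via Lemma~\ref{lem:projection} and Proposition~\ref{prop:cont_en}), but for the right-hand side the paper must invoke Theorem~\ref{thm:continuous} to get weak convergence $\MA(P_K(\phi+tu_j))\to\MA(P_K(\phi+tu))$ for each $t$, and then dominated convergence in $t$.

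Your route avoids this: the single quantitative bound $|F(t)-F_\e(t)|\le t\e\,\vol(L)$, which comes purely from the $1$-Lipschitz property of $P_K$ together with the scaling $\cE(\psi+c)=\cE(\psi)+c\,\vol(L)$ and monotonicity of $\cE$, already controls the difference quotients uniformly and lets you pass directly to the limit of the one-sided derivatives. You thus bypass any appeal to continuity of mixed Monge--Amp\`ere measures along uniformly convergent sequences. The trade-off is that the paper's integral formulation is perhaps conceptually cleaner and makes the link with the equilibrium measure at \emph{every} $t$ explicit, whereas your argument only touches $\MA(P_K\phi)$ at $t=0$; but for the purpose of the lemma your version is both shorter and uses strictly fewer ingredients.
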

\begin{proof} Theorem B admits the following integral reformulation
$$
\cE\circ P_K(\phi+u)-\cE\circ P_K(\phi)=\int_{t=0}^1\int_K u\,\MA(P_K(\phi+tu))dt.
$$
If we let $u_j$ be a sequence of smooth functions on $X$ converging uniformly to $u$ on $K$, then $P_K(\phi+tu_j)\to P_K(\phi+tu)$ uniformly on $X$ by Lemma~\ref{lem:projection}.  
By Proposition~\ref{prop:cont_en}, we deduce that 
$$\lim_{j\to\infty}\cE\circ P_K(\phi+u_j)=\cE\circ P_K(\phi+u).$$
On the other hand for each $t$ we have 
$$\int_Ku_j\,\MA(P_K(\phi+tu_j))-\int_Ku\,\MA(P_K(\phi+tu))$$
$$=\int_K(u_j-u)\MA(P_K(\phi+tu_j))+\int_Ku\left(\MA(P_K(\phi+tu_j)-\MA(P_K(\phi+tu))\right).$$
The first term is bounded by $\vol(L)\sup_K|u_j-u|$ by Theorem~\ref{thm:mass}, whereas the second one converges to $0$ by Theorem~\ref{thm:continuous}. 
We thus see that
$$\lim_{j\to\infty}\int_Ku_j\,\MA(P_K(\phi+tu_j))=\int_Ku\,\MA(P_K(\phi+tu))$$
for all $t$, and we infer
$$\lim_{j\to\infty}\int_{t=0}^1\int_K u_j\,\MA(P_K(\phi+tu_j))dt=\int_{t=0}^1\int_K u\,\MA(P_K(\phi+tu))dt$$
by dominated convergence, which shows our claim. 
\end{proof}
From now on we will thus assume that $u$ is the restriction to $K$ of a $C^\infty$ function on $X$, that we also denote by $u$.

The problem at hand is an instance of a differentiability property for the optimal value of a concave optimisation problem with parameter. Indeed since $\cE$ is non-decreasing we have
$$\cE\circ P_K(\phi)=\sup\{\cE(\psi),\,\psi\textrm{ psh weight with minimal singularities}\,\,\psi\le\phi\textrm{ on }K\}$$
by Choquet's lemma and continuity of the energy along non-decreasing sequences. 

There has been a certain amount of work on differentiability of such optimal values in an abstract setting, but it seems that what we are trying to prove doesn't follow formally from such general results. On the other, the proof of Lemma~\ref{lem:linearize}, though pretty elementary, was inspired by more delicate considerations in~\cite{LM}.

The next lemma enables us to replace $\cE$ by its  linearisation $\lambda$ at $P_K\phi$. 
\begin{lem}\label{lem:linearize} We have
$$\frac{d}{dt}_{t=0_+}\cE\circ P_K(\phi+tu)=\frac{d}{dt}_{t=0_+}\langle\lambda, P_K(\phi+tu)-P_K\phi\rangle.$$
\end{lem}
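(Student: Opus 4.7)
The plan is a sandwich argument based on concavity of $\cE$. First I will write down the twin tangent inequalities supplied by Proposition~\ref{prop:concave} together with the directional derivative formula of Proposition~\ref{prop:int}, applied at the two endpoints of the segment from $P_K\phi$ to $P_K(\phi+tu)$:
$$
\int_X (P_K(\phi+tu) - P_K\phi)\,\MA(P_K(\phi+tu)) \;\le\; \cE(P_K(\phi+tu)) - \cE(P_K\phi) \;\le\; \int_X (P_K(\phi+tu) - P_K\phi)\,\MA(P_K\phi).
$$
Since $\MA(P_K\phi)$ is concentrated on $K$ by Proposition~\ref{prop:support}, the right-hand integral coincides with $\langle\lambda, P_K(\phi+tu) - P_K\phi\rangle$; dividing by $t>0$ and passing to $\limsup$ will already supply one half of the claim.

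For the reverse direction I will exploit Proposition~\ref{prop:support} twice: one has $P_K\psi=\psi$ a.e.\ wrt $\MA(P_K\psi)$ for each of $\psi = \phi,\,\phi+tu$, while $P_K\phi\le\phi$ a.e.\ wrt both of the non-pluripolar measures $\MA(P_K\phi)$ and $\MA(P_K(\phi+tu))$ on $K$ (since these put no mass on pluripolar sets). Substituting these identities into the outer integrals of the sandwich yields the twin bounds
$$
\int_X (P_K(\phi+tu) - P_K\phi)\,\MA(P_K(\phi+tu)) \;=\; \int_K (\phi + tu - P_K\phi)\,\MA(P_K(\phi+tu)) \;\ge\; t\int_K u\,\MA(P_K(\phi+tu))
$$
and symmetrically
$$
\langle\lambda, P_K(\phi+tu) - P_K\phi\rangle \;=\; \int_K (P_K(\phi+tu) - \phi)\,\MA(P_K\phi) \;\le\; t\,\langle\lambda, u\rangle.
$$
By the $1$-Lipschitz property of $P_K$ (Lemma~\ref{lem:projection}), $P_K(\phi+tu)\to P_K\phi$ uniformly on $X$ as $t\to 0_+$, hence $\MA(P_K(\phi+tu))\to\MA(P_K\phi)$ weakly by Theorem~\ref{thm:continuous}; continuity of $u$ on $X$ then gives $\int_K u\,\MA(P_K(\phi+tu))\to\langle\lambda, u\rangle$.

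Combining these estimates, and noting that $t\mapsto\langle\lambda, P_K(\phi+tu)\rangle$ is concave (so possesses a right derivative at $0$) because $P_K$ is concave in its argument and $\MA(P_K\phi)\ge 0$, the chain
\begin{align*}
\langle\lambda, u\rangle &\;\le\; \liminf_{t\to 0_+}\frac{\cE(P_K(\phi+tu)) - \cE(P_K\phi)}{t} \\
&\;\le\; \limsup_{t\to 0_+}\frac{\cE(P_K(\phi+tu)) - \cE(P_K\phi)}{t} \\
&\;\le\; \frac{d}{dt}\bigg|_{t=0_+}\langle\lambda, P_K(\phi+tu) - P_K\phi\rangle \;\le\; \langle\lambda, u\rangle
\end{align*}
emerges, forcing equality throughout and in particular the equality of right derivatives stated in the lemma.

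The main obstacle is the ``moving'' measure $\MA(P_K(\phi+tu))$ on the left of the sandwich, which cannot be replaced by $\MA(P_K\phi)$ directly. The support trick of Proposition~\ref{prop:support} is what circumvents this difficulty: it trades the awkward factor $P_K(\phi+tu) - P_K\phi$ for $tu$ modulo an error of definite sign, and after division by $t$ this error is absorbed by the weak continuity supplied by Theorem~\ref{thm:continuous}. The reduction to smooth $u$ carried out in Lemma~\ref{lem:reg} is implicitly used to legitimize the weak convergence step.
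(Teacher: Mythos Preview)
Your proof is correct, but it takes a genuinely different route from the paper's and in fact establishes more than the lemma asks: you show directly that both right derivatives equal $\langle\lambda,u\rangle$, thereby proving Lemma~\ref{lem:linearize} and Lemma~\ref{lem:linearized} in a single stroke. The paper's argument for Lemma~\ref{lem:linearize} is pure convex analysis---it uses only concavity of $\cE$ for the upper bound, and for the lower bound a two-scale $(\delta,t)$ trick exploiting concavity of $P_K$ and monotonicity of $\cE$; no support properties, no weak convergence of Monge--Amp\`ere measures, and no comparison principle are invoked, so the lemma would hold verbatim for any non-decreasing concave functional in place of $\cE$. The paper then handles Lemma~\ref{lem:linearized} separately, and \emph{that} is where the heavy pluripotential input (the comparison principle, Corollary~\ref{cor:comparison}) enters. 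Your approach instead invokes Proposition~\ref{prop:support} at both $\phi$ and $\phi+tu$, combined with weak continuity of $\MA$ along uniform sequences (Theorem~\ref{thm:continuous}); the payoff is that you bypass the comparison principle entirely, which is the most delicate ingredient in the paper's proof of Theorem~B. One step you leave slightly implicit is why $P_K\phi\le\phi$ holds $\MA(P_K(\phi+tu))$-a.e.\ on $K$: this is because $\{P_K\phi>\phi_K\}$ is negligible, hence pluripolar by~\cite{BT82}, and non-pluripolar products charge no such sets---the paper relies on the same fact without comment in its proof of Lemma~\ref{lem:linearized}, so this is not a gap peculiar to your argument.
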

\begin{proof} Set for simplicity
$$a:=\frac{d}{dt}_{t=0_+}\langle\lambda,P_K(\phi+tu)-P_K\phi\rangle,$$
which exists since $\lambda\circ P_K$ is concave. On the one hand, concavity of $\cE$ yields 
$$\cE\circ P_K(\phi+tu)\le\cE\circ P_K(\phi)+\langle\lambda,P_K(\phi+tu)-P_K\phi\rangle,$$
hence
$$\frac{d}{dt}_{t=0_+}\cE\circ P_K(\phi+tu)\le a.$$
On the other hand, given $\e>0$ we can fix $\de>0$ small enough such that 
\begin{equation}\label{equ:derivee}\langle\lambda, P_K(\phi+\de u)-P_K(\phi)\rangle\ge\de a-\de\e.
\end{equation}
For $t>0$ small enough we then have
$$\cE((1-t)P_K\phi+t P_K(\phi+\de u))\ge\cE\circ P_K(\phi)+t\langle\lambda,P_K(\phi+\de u)-P_K\phi\rangle-t\de\e$$
by Proposition~\ref{prop:int}
$$\ge\cE\circ P_K(\phi)+t\de a-2t\de\e$$
by (\ref{equ:derivee}). But since 
$$P_K(\phi+t\de u)\ge (1-t)P_K\phi+tP_K(\phi+\de u)$$
by concavity of $P_K$, we finally infer that
$$\cE\circ P_K(\phi+t\de u)\ge\cE\circ P_K(\phi)+t\de a-2t\de\e$$
for all $t>0$ small enough by monotonicity of $\cE$. It follows that 
$$\frac{d}{dt}_{t=0_+}\cE\circ P_K(\phi+tu)\ge a-2\e$$
for each $\e>0$, and the result follows. 
\end{proof}
We are now reduced to proving the linearised version of the problem, to wit

\begin{lem}\label{lem:linearized} The super-differential at $\phi$ of the linearised problem is reduced to $\lambda$. In other words, we have
$$\frac{d}{dt}_{t=0_+}\langle\lambda,P_K(\phi+tu)-P_K\phi\rangle=\langle\lambda,u\rangle$$
for each $u\in C^0(K)$. 
\end{lem}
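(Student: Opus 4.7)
The plan is to prove Lemma~\ref{lem:linearized} by establishing matching upper and lower bounds on $t^{-1}\langle\lambda, v_t\rangle$ as $t\to 0^+$, where $v_t := P_K(\phi+tu) - P_K\phi$. For the upper bound, I would exploit Proposition~\ref{prop:support}: $\MA(P_K\phi)$ is concentrated on $K$ with $P_K\phi = \phi$ a.e.\ wrt this measure, so
$$\langle\lambda, v_t\rangle = \int_K(P_K(\phi+tu) - \phi)\,\MA(P_K\phi).$$
On the other hand, $P_K(\phi+tu)$ is the usc regularisation of the pointwise envelope $\sup\{\psi\text{ psh},\,\psi\le\phi+tu\text{ on }K\}$, so $P_K(\phi+tu)\le\phi+tu$ on $K$ outside a pluripolar set; since $\MA(P_K\phi)$ puts no mass on pluripolar subsets, one concludes $\langle\lambda,v_t\rangle\le t\langle\lambda,u\rangle$.

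For the lower bound, the strategy is to exploit the symmetry between the two psh weights via integration by parts. Since $v_t$ is bounded (by Lipschitz continuity of $P_K$, Lemma~\ref{lem:projection}) and is the difference of two psh weights with minimal singularities --- in particular locally bounded on a common Zariski open subset $\Omega$ --- Theorem~\ref{thm:stokes} gives
$$\int_X v_t\bigl(\MA(P_K(\phi+tu))-\MA(P_K\phi)\bigr) = -\int_\Omega dv_t\wedge d^cv_t\wedge T_t \le 0,$$
where $T_t := \sum_{j=0}^{n-1}(dd^cP_K(\phi+tu))^j\wedge(dd^cP_K\phi)^{n-1-j}$ is positive. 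Consequently $\langle\lambda,v_t\rangle\ge\int_X v_t\,\MA(P_K(\phi+tu))$. Applying Proposition~\ref{prop:support} this time to the weighted subset $(K,\phi+tu)$ gives $P_K(\phi+tu) = \phi+tu$ a.e.\ wrt $\MA(P_K(\phi+tu))$, while the inequality $P_K\phi\le\phi$ on $K$ outside a pluripolar set also holds a.e.\ wrt this measure. Combining yields
$$\int_X v_t\,\MA(P_K(\phi+tu)) \ge t\int_X u\,\MA(P_K(\phi+tu)).$$
Dividing by $t$ and using Theorem~\ref{thm:continuous} --- since $P_K(\phi+tu)\to P_K\phi$ uniformly on $X$ by Lemma~\ref{lem:projection}, one has $\MA(P_K(\phi+tu))\to\MA(P_K\phi)$ weakly --- the right-hand side tends to $\int_X u\,\MA(P_K\phi)=\langle\lambda,u\rangle$, completing the proof.

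The common mechanism behind both one-sided bounds is the \emph{contact-set identity} $P_K\psi = \psi$ a.e.\ wrt $\MA(P_K\psi)$ coming from Proposition~\ref{prop:support}, combined with the fact that non-pluripolar Monge-Amp\`ere measures ignore the pluripolar exceptional sets arising from usc regularisations. The main technical input --- and the step I expect to be the primary obstacle in any attempt to generalise the argument --- is the validity of the integration-by-parts formula of Theorem~\ref{thm:stokes} for currents built out of psh weights with merely minimal singularities; in the big line bundle setting considered here this is exactly the content of~\cite{BEGZ}.
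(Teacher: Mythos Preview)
Your proof is correct and takes a genuinely different route from the paper's.

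The paper's argument proceeds by first reducing (via Lemma~\ref{lem:reg}) to the case where $u$ is smooth on $X$, then introduces an auxiliary strictly psh weight $\phi_+$ with analytic singularities and applies the \emph{comparison principle} (Corollary~\ref{cor:comparison}) to the pair $P_K\phi+t(u+\phi_+)$ and $P_K(\phi+tu)+t\phi_+$ on the $\R$-line bundle $(1+t)L$. A binomial expansion then yields the quantitative estimate $\int_{O_t}\MA(P_K\phi)=O(t)$ on the ``bad'' set $O_t=\{P_K(\phi+tu)<P_K\phi+tu\}$, which combined with the $O(t)$ Lipschitz bound gives the result.

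Your argument bypasses both the smoothness reduction and the comparison principle entirely. The upper bound is the same orthogonality observation the paper makes, but for the lower bound you instead use the integration-by-parts formula (Theorem~\ref{thm:stokes}) to swap $\MA(P_K\phi)$ for $\MA(P_K(\phi+tu))$ at the cost of a sign, then apply Proposition~\ref{prop:support} to the \emph{perturbed} weighted subset $(K,\phi+tu)$, and finally pass to the limit by weak continuity of the Monge--Amp\`ere operator along the uniformly convergent family $P_K(\phi+tu)\to P_K\phi$. This is more direct: no auxiliary $\phi_+$, no $\R$-line bundle, no need for $u$ smooth (a Tietze extension suffices for the weak convergence step since all the measures involved are supported on $K$). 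What the paper's approach buys in exchange is the explicit rate $\int_{O_t}\lambda=O(t)$, which is stronger than what is needed for the lemma but is not used elsewhere.
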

Recall that the super-differential of a concave function $f$ at a point $x_0$ of an open convex subset $U$ of a locally convex topological vector space $V$ is defined as the set of all continuous linear forms $l\in V^*$ such that 
$$
f(x_0)+\langle l,x-x_0\rangle\ge f(x)
$$
for all $x\in U$, which means that $l$ defines at support hyperplane at $(x_0,f(x_0))$ to the graph of $f$ (cf.~e.g.\cite{Roc} for more details). A crucial ingredient here is the following \emph{orthogonality relation}
$$\langle\lambda, P_K\phi-\phi\rangle=0,$$
which follows from Proposition~\ref{prop:support}. Since $P_K(\phi+u)\le\phi+u$, this property implies
$$\langle\lambda,P_K(\phi+u)-P_K(\phi)\rangle\le\langle\lambda,u\rangle,$$
which means that the linear form $\lambda$ belongs to the super-differential at $0$ of the continuous concave function
$$u\mapsto\langle\lambda,P_K(\phi+u)-P_K\phi\rangle.$$

At this point, we also see that Theorem B reduces to the differentiability part of the assertion, since the differential then has to coincide with $\lambda$. 

\begin{proof} We now prove Lemma~\ref{lem:linearized}. Our goal is to show that
$$\langle\lambda,P_K(\phi+tu)-P_K\phi-tu\rangle=o(t).$$
Since on the one hand
$$P_K(\phi+tu)\le\phi+tu=P_K\phi+tu$$
$\lambda$-a.e. and on the other hand 
$$\sup_X|P_K(\phi+tu)-P_K\phi-tu|=O(t)$$
by Lemma~\ref{lem:projection}, it will be enough to show that
\begin{equation}\label{equ:comparison}\lim_{t\to 0_+}\int_{\{P_K(\phi+tu)<P_K\phi+tu\}}\lambda=0.\end{equation}
We are going to show this by applying the comparison principle (Corollary~\ref{cor:comparison}). We now fix a strictly psh weight $\phi_{+}$ with analytic singularities on $L$. Since $u$ is assumed to be smooth according to Lemma~\ref{lem:reg}, it follows that $\phi_{+}+\e u$
is psh for $\e>0$ small enough. Upon scaling $u$, we may assume that $\e=1$. 

Since $P_{K}\phi-P_{K}(\phi+tu)$ is bounded by Lemma~\ref{lem:projection}, 
it follows in particular that  
$$P_{K}(\phi)+t\phi_{+}+tu$$ and 
$$P_{K}(\phi+tu)+t\phi_{+}$$
are both psh wieights on the same $\R$-line bundle $(1+t)L$ with equivalent singularities (use the language of quasi-psh functions to make sense of the notion of psh weight on an $\R$-line bundle). The generalised comparison principle (Corollary~\ref{cor:comparison}) thus yields
$$\int_{O_t}\MA(P_K(\phi)+t(u+\phi_{+}))$$
$$\le\int_{O_t}\MA(P_{K}(\phi+tu)+t\phi_{+})$$
with 
$$O_t:=\{ P_{K}(\phi+tu)<P_{K}(\phi)+tu\}.$$
Now the binomial formula yields
\begin{equation}\label{equ:binomial}\MA(P_{K}(\phi+tu)+t\phi_{+})=\MA(P_K(\phi+tu))+\sum_{j=1}^{n}{{n \choose j}}t^{j}(dd^c P_{K}(\phi+tu))^{n-j}\wedge (dd^c\phi_{+})^{j}\end{equation}
on the Zariski open subset where all psh weights with minimal singularities are locally bounded. Since $t(u+\phi_+)$ is a psh weight on $tL$ by assumption, we have 
$$\int_{O_t}\MA(P_K\phi)\le\int_{O_t}\MA(P_K(\phi)+t(u+\phi_{+})),$$
which is in turn
$$\le\int_{O_t}\MA(P_K(\phi+tu))+O(t)$$
by (\ref{equ:binomial}) and Theorem~\ref{thm:mass}. But $P_K\phi\le\phi$ implies that
$$O_t\subset\{P_K(\phi+tu)<\phi+tu\},$$ 
and we infer that 
$$\int_{O_t}\MA(P_K(\phi+tu))=0$$
by Proposition~\ref{prop:support} again. We thus conclude that
$$\int_{O_t}\MA(P_K\phi)=O(t),$$
and the proof of Lemma~\ref{lem:linearized} is thus complete.
\end{proof} 

We now show that the energy at equilibrium is $C^{1,1}$ in the following sense: 
\begin{prop}\label{prop:c11} Let $(K,\phi)$ be a weighted subset and let $u$ be a smooth  function on $X$. Then the directional derivative of $\phi\mapsto\eneq(K,\phi)$ at $\phi$ in the direction $u$ is Lipschitz continuous with respect to the sup-norm on $K$. 
\end{prop}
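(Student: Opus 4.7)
The plan is to combine Theorem B, which identifies the directional derivative with integration against the equilibrium measure, with the standard telescoping identity for Monge--Amp\`ere operators and the integration-by-parts formula of Theorem~\ref{thm:stokes}. By Theorem B, for any continuous weight $\phi$ on $L|_K$ we have
$$\frac{d}{dt}_{t=0}\eneq(K,\phi+tu)=\vol(L)^{-1}\int_X u\,\MA(P_K\phi),$$
where we have used that $\MA(P_K\phi)$ is concentrated on $K$ (Proposition~\ref{prop:support}). Thus the proposition amounts to showing that
$$\phi\mapsto F_u(\phi):=\int_X u\,\MA(P_K\phi)$$
is Lipschitz with respect to $\sup_K|\cdot|$, with Lipschitz constant depending only on $u$ (and $X$, $L$).

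The key step is to estimate the difference $F_u(\phi_1)-F_u(\phi_2)$ for two continuous weights. Setting $v:=P_K\phi_1-P_K\phi_2$, which is a bounded function on $X$ since both $P_K\phi_j$ have minimal singularities, I would use the algebraic cocycle identity
$$(dd^c P_K\phi_1)^n-(dd^c P_K\phi_2)^n=dd^c v\wedge T,$$
with
$$T:=\sum_{j=0}^{n-1}(dd^c P_K\phi_1)^j\wedge(dd^c P_K\phi_2)^{n-1-j},$$
a closed positive current of bidimension $(1,1)$, on the Zariski open subset $\Omega$ where all psh weights with minimal singularities are locally bounded. The integration-by-parts formula (Theorem~\ref{thm:stokes}) then applies, since $u$ is smooth and $v$ is bounded and a difference of psh weights locally bounded on $\Omega$, yielding
$$F_u(\phi_1)-F_u(\phi_2)=\int_\Omega u\,dd^c v\wedge T=\int_\Omega v\,dd^c u\wedge T.$$

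From here one immediately bounds
$$|F_u(\phi_1)-F_u(\phi_2)|\le\sup_X|v|\cdot\bigl\Vert dd^c u\bigr\Vert_\infty\cdot\int_X T.$$
The total mass of $T$ is at most $n\,\vol(L)$ by Theorem~\ref{thm:mass} (expand and bound each mixed term), and by the $1$-Lipschitz property of $P_K$ in Lemma~\ref{lem:projection} we have $\sup_X|v|\le\sup_K|\phi_1-\phi_2|$. Putting this together gives the desired Lipschitz estimate with constant $C(u)=n\,\Vert dd^c u\Vert_\infty$ (after dividing by $\vol(L)$ as in the definition of $\eneq$).

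The main obstacle is justifying the integration by parts in the merely big (not ample) case, because the $P_K\phi_j$'s and the current $T$ are only controlled on the Zariski open subset $\Omega$ and mass can a priori accumulate along $X\setminus\Omega$. This is however exactly the situation handled by Theorem~\ref{thm:stokes}, so no new estimate is needed; the proof should then be essentially a direct computation combining the cocycle identity, Theorem~\ref{thm:stokes}, Theorem~\ref{thm:mass} and Lemma~\ref{lem:projection}.
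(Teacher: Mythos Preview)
Your proof is correct and follows essentially the same approach as the paper: reduce via Theorem B and Lemma~\ref{lem:projection} to showing that $\psi\mapsto\int_X u\,\MA(\psi)$ is Lipschitz on psh weights with minimal singularities, then use the telescoping identity (\ref{equ:bott-chern}), integrate by parts via Theorem~\ref{thm:stokes}, and bound the mass of the resulting positive $(n-1,n-1)$-current using Theorem~\ref{thm:mass}. Your write-up is in fact slightly more detailed than the paper's (which has a minor typo in the indices of $\Theta$).
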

\begin{proof} By Lemma~\ref{lem:projection} and Theorem B it is enough to show that 
$$\phi\mapsto\int_X u\MA(\phi)$$
is Lipschitz continuous on the space of psh weights with minimal singularities endowed with the sup-norm. By (\ref{equ:bott-chern}) above and integration by parts (Theorem~\ref{thm:stokes}) yield
$$\int_X u\MA(\phi)-\int_X u\MA(\psi)=\int_X\left(\phi-\psi\right)dd^c u\wedge\Theta$$
where the positive current
$$\Theta:=\sum_{j=1}^{n-1}\langle (dd^c\phi)^j\wedge(dd^c\psi)^{n-j}\rangle$$
has uniformly bounded mass by Theorem~\ref{thm:mass}, and the result follows.
\end{proof}

\section{Volume growth and transfinite diameter}\label{sec:thmA}

\subsection{Proof of Theorem A}
Let $(K_1,\phi_1)$ and $(K_2,\phi_2)$ be two weighted subsets. Our goal is to prove that 
\begin{equation}\label{equ:thmA}\lim_{k\to\infty}\cL_k(K_1,\phi_1)-\cL_k(K_2,\phi_2)=\eneq(K_1,\phi_1)-\eneq(K_2,\phi_2).
 \end{equation}

If this formula holds for all $(K_1,\phi_1)$ and a \emph{fixed} $(K_2,\phi_2)$, then it also holds for \emph{any} $(K_2,\phi_2)$ by taking differences. We can thus assume that $K_2=X$ and that $\phi_2$ is a fixed smooth weight on $X$.

{\bf Step 1}. As a first step, we also assume that $K_1=X$ and $\phi_1$ is smooth. Let $\mu$ be a smooth positive volume form, so that both weighted measures $(\mu,\phi_i)$, $i=1,2$ satisfy the BM property by Lemma~\ref{lem:BMcont}. By Lemma~\ref{lem:elde}, (\ref{equ:thmA}) is thus equivalent in that case to
\begin{equation}\label{equ:eldeA}\lim_{k\to\infty}\cL_k(\mu,\phi_1)-\cL_k(\mu,\phi_2)=\eneq(X,\phi_1)-\eneq(X,\phi_2).
\end{equation}
As mentioned in (\ref{equ:gram}), the volume ratio of $L^2$-balls can be expressed as a Gram determinants. As a consequence, we will prove:
\begin{lem}
\label{lem:smooth} The directional derivatives of $\cL_k(\mu,\cdot)$
at a smooth weight $\phi$ are given by integration against the Bergman measure $\b(\mu,k\phi)$. 
\end{lem}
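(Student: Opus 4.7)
\textbf{Plan of proof for Lemma~\ref{lem:smooth}.} The plan is to translate the derivative of the log-volume of an $L^{2}$-ball into the derivative of a log-determinant of a Gram matrix, and then recognise the resulting trace as the integral of $u$ against the Bergman measure.

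First, fix any basis $S=(s_{1},\dots,s_{N_{k}})$ of $H^{0}(kL)$ and let $M(\phi)$ denote the Gram matrix with entries $M(\phi)_{ij}=\langle s_{i},s_{j}\rangle_{L^{2}(\mu,k\phi)}$. Diagonalising $M(\phi)$ and computing the Lebesgue volume of the resulting ellipsoid in $H^{0}(kL)\cong\C^{N_{k}}\cong\R^{2N_{k}}$ gives
$$\vol_{k}\cB^{2}(\mu,k\phi)=\frac{c_{N_k}}{\det M(\phi)}$$
for some constant $c_{N_k}>0$ depending only on the normalisation of $\vol_k$. Taking logarithms,
$$2kN_{k}\,\cL_{k}(\mu,\phi)=-\log\det M(\phi)+\mathrm{const}.$$

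Next, I would differentiate the Gram matrix along the one-parameter family $\phi_t:=\phi+tu$. Since $\phi$ is smooth and $u\in C^{0}(X)$, the entries $M(\phi_{t})_{ij}=\int_{X}s_{i}\overline{s_{j}}\,e^{-2k\phi}e^{-2tku}\,d\mu$ depend smoothly on $t$, with
$$\frac{d}{dt}\Big|_{t=0}M(\phi_{t})_{ij}=-2k\int_{X}u\,s_{i}\overline{s_{j}}\,e^{-2k\phi}\,d\mu.$$
Choose the basis $S$ to be $L^{2}(\mu,k\phi)$-orthonormal, so that $M(\phi)=I$. The standard formula $\frac{d}{dt}\log\det M(\phi_{t})=\tr\bigl(M(\phi_{t})^{-1}M'(\phi_{t})\bigr)$ then yields at $t=0$:
$$\frac{d}{dt}\Big|_{t=0}\log\det M(\phi_{t})=-2k\sum_{i}\int_{X}u\,|s_{i}|^{2}_{k\phi}\,d\mu=-2k\int_{X}u\,\rho(\mu,k\phi)\,d\mu.$$

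Combining these two computations gives
$$\frac{d}{dt}\Big|_{t=0}\cL_{k}(\mu,\phi+tu)=\frac{1}{2kN_{k}}\cdot 2k\int_{X}u\,\rho(\mu,k\phi)\,d\mu=\int_{X}u\,d\b(\mu,k\phi),$$
where the last equality is the very definition (\ref{equ:berg_meas}) of the Bergman measure. There is no genuine obstacle here: the computation is elementary once the $L^{\infty}$-ball has been replaced by the $L^{2}$-ball via the Gram determinant formula (\ref{equ:gram}), and the smoothness of $\phi$ is used only to ensure that the entries $M(\phi_{t})_{ij}$ are differentiable in $t$ with the derivative under the integral being justified (the integrand is uniformly bounded since $\phi$, $u$ and the $s_i$ are continuous on the compact manifold $X$). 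The content of the lemma is really the identification of the resulting trace with the integral against $\b(\mu,k\phi)$.
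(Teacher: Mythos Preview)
Your proof is correct and follows essentially the same route as the paper's own argument: express $\cL_k(\mu,\phi+tu)-\cL_k(\mu,\phi)$ via the Gram determinant (\ref{equ:gram}), choose an $L^2(\mu,k\phi)$-orthonormal basis so that the Gram matrix is the identity at $t=0$, differentiate, and identify the resulting trace with $\int_X u\,\rho(\mu,k\phi)\,d\mu$. The only cosmetic difference is that the paper uses $\frac{d}{dt}\log\det H_k(t)\big|_{t=0}=\frac{d}{dt}\tr H_k(t)\big|_{t=0}$ directly (since $H_k(0)=\id$), whereas you invoke the general formula $\tr(M^{-1}M')$.
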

\begin{proof} Let $v$ be a given smooth function. By (\ref{equ:gram}) we have
$$\cL_k(\mu,\phi+t v)-\cL_k(\mu,\phi)=-\frac{1}{2kN_k}\log\det H_k(t)$$
with the Gram matrix  
$$H_k(t):=\left(\int_{X}s^{(k)}_{i}\overline{s^{(k)}_{j}}e^{-2k(\phi+t v)}d\mu\right)_{1\leq i,j\leq N},$$
$S_k=(s^{(k)}_j)_j$ being a fixed orthonormal basis of $H^0(kL)$ with respect
to $L^{2}(\mu,k\phi)$. Since $H_k(0)=\id$, it follows
that 
$$\frac{d}{dt}_{t=0}\cL_k(\mu,\phi+t v)=-\frac{1}{2kN_k}\frac{d}{dt}_{t=0}\tr H_k(t)$$
$$
=-\frac{1}{2kN_k}\int_{X}\sum_{j}|s^{(k)}_{j}|^{2}(-2kv)e^{-2k\phi}\mu=\frac{1}{N_k}\int_{X}v\,\rho(\mu,k\phi)\mu$$
and the result follows by definition (\ref{equ:berg_meas}) of $\beta(\mu,k\phi)$ . 
\end{proof}
By Theorem~\ref{thm:robert} we have
\begin{equation}\label{equ:robert}\lim_{k\to\infty}\b(\mu,k\phi)=\eq(X,\phi)
\end{equation}
for any smooth weight $\phi$. Now the right-hand side is the derivative of $\eneq(X,\cdot)$ by Theorem B, so in view of Lemma~\ref{lem:smooth} we get (\ref{equ:eldeA}) by integrating (\ref{equ:robert}) along the segment between $\phi_1$ and $\phi_2$. More precisely, Lemma~\ref{lem:smooth} implies 
$$\cL_k(\mu,\phi_1)-\cL_k(\mu,\phi_2)=\int_{t=0}^{1}dt\int_{X}(\phi_1-\phi_2)\beta(\mu,k\phi_t)$$
with 
$$\phi_t:=t\phi_1+(1-t)\phi_2.$$
By (\ref{equ:robert}) we have 
$$
\int_{X}(\phi_1-\phi_2)\beta(\mu,k\phi_t)\to\int_{X}(\phi_1-\phi_2)\eq(X,\phi_t)
$$ for each $t\in[0,1]$. Since 
$$\int_X(\phi_1-\phi_2)\beta(\mu,k\phi_t)\le\sup_X|\phi_1-\phi_2|$$
for each $k$ and each $t$, it follows by dominated convergence that
$$\lim_{k\to\infty}\cL_k(\mu,\phi_1)-\cL_k(\mu,\phi_2)=\int_{t=0}^{1}dt\int_{X}(\phi_1-\phi_2)\eq(X,\phi_t)$$
$$=\int_{t=0}^{1}\frac{d}{dt}\eneq(X,\phi_t)=\eneq(X,\phi_1)-\eneq(X,\phi_2)$$
by Theorem B, as desired. 

\begin{rem} The argument just presented is similar to Donaldson's proof of Proposition 2 in~\cite{Don1}. In particular, Lemma~\ref{lem:smooth} is a variant of Lemma 2 of~\cite{Don1} (cf. also Lemma 3.1 of~\cite{Bern}). 
\end{rem}

{\bf Step 2}. We now consider the general case. We first note that 
\begin{equation}\label{equ:lk}\cL_k(K,\phi)=\cL_k(X,\phi_K)
\end{equation}
as a consequence of Proposition~\ref{prop:max}, and that $\cL_k(X,\cdot)$ is non-decreasing. By Proposition~\ref{prop:approx} we can find two sequences
$\phi_{j}^{\pm}$ of smooth weights on $L$ such that 
\begin{equation}
P_X\phi_{j}^{-}\leq\phi_K\le P_K\phi\le P_X\phi_{j}^{+}\label{equ:sequences}\end{equation}
 where $P_X\phi_{j}^{-}$ (resp.~$P_X\phi_{j}^{+}$) increases (resp.~decreases)
to $P_K\phi$ almost everywhere (resp.~everywhere) on $X$ when
$j$ tends to infinity. By Step 1, we get 
$$\cE(P_X\phi_j^-)-\eneq(X,\phi_2)=\lim_{k\to\infty}\cL_k(X,P_X\phi_j^-)-\cL_k(X,\phi_2)$$
$$\le\liminf_{k\to\infty}\cL_k(X,\phi_K)-\cL_k(X,\phi_2)\le\limsup_{k\to\infty}\cL_k(X,\phi_K)-\cL_k(X,\phi_2)$$
$$\le\lim_{k\to\infty}\cL_k(X,P_X\phi_j^+)-\cL_k(X,\phi_2)$$
by (\ref{equ:sequences}) and monotonicity of $\cL_k(X,\cdot)$ 
$$=\cE(P_X\phi_j^+)-\eneq(X,\phi_2)$$ 
by Step 1 again. Now 
$$\eneq(X,\phi_j^\pm)=\vol(L)^{-1}\cE(P_X\phi_j^\pm)$$
tends to 
$$\vol(L)^{-1}\cE(P_K\phi)=\eneq(K,\phi)$$ 
by continuity of the energy along monotonic sequences (Proposition~\ref{prop:cont_en}), and putting all this together concludes the proof of Theorem A. 

\subsection{Proof of Corollary A}
We start the proof with some algebraic preliminaries. Let $(\mu,\phi)$ be a weighted measure on $(X,L)$. For each $m\in\N$ the Hilbert space structure on $H^0(X,L)$ defined by the $L^2(\mu,\phi)$-scalar product induces a Hilbert space structure on both $H^0(X,L)^{\otimes m}$ and $H^0(X,L)^{\wedge m}$ respectively. If $(s_j)$ is an $L^2(\mu,\phi)$-orthonormal basis of $H^0(X,L)$ then $s_{i_1}\otimes...\otimes s_{i_m}$, $1\le i_1,...,i_m\le N$ and $s_{i_1}\wedge...\wedge s_{i_m}$, $1\le i_1<...<i_m\le N$, are respective orthonormal basis, which shows that the vector space embedding
$$\Psi_m:H^0(X,L)^{\wedge m}\to H^0(X,L)^{\otimes m}$$
induced by the anti-symmetrization operator
$$s_1\otimes...\otimes s_m\mapsto\sum_{\sigma\in S_m}\text{sgn}(\sigma)s_{\sigma(1)}\otimes...\otimes s_{\sigma(m)}
$$
satisfies 
\begin{equation}\label{equ:anti}\Vert\Psi_m(v)\Vert^2=m!\Vert v\Vert^2.
\end{equation}

On the other hand $H^0(X^m,L^{\boxtimes m})$ is endowed with the $L^2$-scalar product induced by the probability measure $\mu^m$ and the weight
$$(x_1,...,x_m)\mapsto\phi(x_1)+...+\phi(x_m).$$
We claim that the usual vector space isomorphism
$$H^0(X,L)^{\otimes m}\simeq H^0(X^m,L^{\boxtimes m})$$
is an isometry with respect to the Hilbert space structures. Indeed this amounts to saying that given an $L^2(\mu,\phi)$-orthonormal basis $(s_j)$ of $H^0(X,L)$ the $N^m$ sections of $H^0(X^m,L^{\boxtimes m})$ defined by
$$(x_1,...,x_m)\mapsto s_{i_1}(x_1)\otimes...\otimes s_{i_m}(x_m),\,1\le i_1,...,i_m\le N,$$
are orthonormal, which is an immediate consequence of Fubini's theorem. 

Now recall from the introduction that given a basis $S=(s_1,...,s_N)$ of $H^0(L)$ we define the determinant section $\det S\in H^0(X^N,L^{\boxtimes N})$ by
\begin{equation}\label{equ:det}(\det S)(x_1,...,x_N):=\det(s_i(x_j))_{i,j}.
\end{equation}
Given a weighted subset $(K,\phi)$ and a probability measure $\mu$ on $K$ the corresponding $L^\infty$-norm (resp. $L^2$ norm ) of $\det S$ will simply be denoted by 
\begin{equation}\label{equ:linf}
\Vert\det S\Vert_{L^\infty(K,\phi)}:=\sup_{(x_1,...,x_N)\in K^N}|\det(s_i(x_j))|e^{-\left(\phi(x_1)+...+\phi(x_N)\right)}
\end{equation}
and
\begin{equation}\label{equ:ldeux}
\Vert\det S\Vert^2_{L^2(\mu,\phi)}:=\int_{(x_1,...,x_N)\in X^N}|\det(s_i(x_j))|^2e^{-2\left(\phi(x_1)+...+\phi(x_N)\right)}\mu(dx_1)...\mu(dx_N).
\end{equation}
We will rely on the following formula for this $L^2$-norm, which is well-known in the context of determinantal point processes  (compare~\cite{Dei} p.103, \cite{Joh} Proposition 2.10) and is also familiar in quantum mechanics (Slater determinants). 

\begin{lem}\label{lem:detL2}
$$
\Vert\det S\Vert^2_{L^2(\mu,\phi)}=N!\det\left(\langle s_i,s_j\rangle_{L^2(\mu,\phi)}\right)_{i,j}.
$$
\end{lem}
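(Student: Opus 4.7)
The plan is to combine the two algebraic preliminaries already assembled immediately before the statement: the antisymmetrization identity $\Vert\Psi_m(v)\Vert^2 = m!\,\Vert v\Vert^2$ from (\ref{equ:anti}) with $m=N$, together with the isometric identification $H^0(X,L)^{\otimes N} \simeq H^0(X^N,L^{\boxtimes N})$ induced by external tensor product. The first step is to observe that, under this isometry composed with $\Psi_N$, the antisymmetric tensor $s_1\wedge\cdots\wedge s_N$ is sent precisely to the section $\det S\in H^0(X^N,L^{\boxtimes N})$ defined in (\ref{equ:det}). Indeed, expanding by the Leibniz rule,
$$(\det S)(x_1,\ldots,x_N) = \sum_{\sigma\in S_N}\mathrm{sgn}(\sigma)\,s_{\sigma(1)}(x_1)\cdots s_{\sigma(N)}(x_N),$$
which is exactly the image of $\Psi_N(s_1\wedge\cdots\wedge s_N) = \sum_\sigma\mathrm{sgn}(\sigma)\,s_{\sigma(1)}\otimes\cdots\otimes s_{\sigma(N)}$ under the external-tensor identification.

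Applying (\ref{equ:anti}) with $v = s_1\wedge\cdots\wedge s_N$ therefore gives
$$\Vert\det S\Vert^2_{L^2(\mu,\phi)} = N!\,\Vert s_1\wedge\cdots\wedge s_N\Vert^2_{H^0(X,L)^{\wedge N}},$$
and the lemma reduces to the classical Hilbert-space identity
$$\Vert s_1\wedge\cdots\wedge s_N\Vert^2 = \det\bigl(\langle s_i,s_j\rangle_{L^2(\mu,\phi)}\bigr)_{i,j}.$$
To establish this, I would fix an $L^2(\mu,\phi)$-orthonormal basis $(e_j)$ of $H^0(X,L)$, write $s_i = \sum_j a_{ij}e_j$, and expand $s_1\wedge\cdots\wedge s_N = \det(A)\, e_1\wedge\cdots\wedge e_N$ with $A=(a_{ij})$. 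Since the wedges $(e_{i_1}\wedge\cdots\wedge e_{i_N})_{i_1<\cdots<i_N}$ form an orthonormal basis of $H^0(X,L)^{\wedge N}$, the squared norm equals $|\det A|^2 = \det(A\bar A^{\top})$, and a direct computation identifies the entries of $A\bar A^{\top}$ with $\langle s_i,s_j\rangle_{L^2(\mu,\phi)}$.

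There is no substantive obstacle: the lemma is a bookkeeping fusion of two elementary facts. The only point to check carefully is that $\Psi_N$ composed with the external-tensor isomorphism really does send $s_1\wedge\cdots\wedge s_N$ to $\det S$, which is immediate from the Leibniz formula and the very definition of $\Psi_N$. A fully self-contained alternative, bypassing the wedge formalism, would expand $|\det(s_i(x_j))|^2$ directly by Leibniz, integrate using Fubini, and collapse the resulting double sum over $S_N\times S_N$ into $N!$ times the single sum over $S_N$ defining the Gram determinant, after the reindexing $\pi = \tau\sigma^{-1}$.
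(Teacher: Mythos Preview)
Your proof is correct and uses essentially the same ingredients as the paper's: the identification $\det S=\Psi_N(s_1\wedge\cdots\wedge s_N)$, the antisymmetrization identity (\ref{equ:anti}), and a change to an $L^2(\mu,\phi)$-orthonormal basis. The only difference is the order of operations: the paper first reduces to an orthonormal basis $S'$ via the transformation $\det S=(\det A)\det S'$ and then applies (\ref{equ:anti}) to get $\Vert\det S'\Vert^2=N!$, whereas you apply (\ref{equ:anti}) first and then compute $\Vert s_1\wedge\cdots\wedge s_N\Vert^2$ via the orthonormal basis; this is a cosmetic reordering rather than a different argument.
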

\begin{proof} Let $S'$ be an $L^2(\mu,\phi)$-orthonormal basis and write
$$
s_j=\sum_{i=1}^N a_{ij}s_i'.
$$
The matrix $A=(a_{ij})$ satisfies 
$$
\det\left(\langle s_i,s_j\rangle_{L^2(\mu,\phi)}\right)_{i,j}=|\det A|^2
$$
thus (\ref{equ:ldeux}) yields
$$
\Vert\det S\Vert^2_{L^2(\mu,\phi)}=\det\left(\langle s_i,s_j\rangle_{L^2(\mu,\phi)}\right)_{i,j}\Vert\det S'\Vert^2_{L^2(\mu,\phi)}.
$$
We may thus assume that $S=S'$ is an $L^2(\mu,\phi)$-orthonormal basis and we then have to show that 
\begin{equation}\label{equ:dt}\Vert\det S\Vert^2_{L^2(\mu,\phi)}=N!
\end{equation}
But comparing definitions shows that 
$$\det S=\Psi_N(s_1\wedge...\wedge s_N)$$
where $s_1\wedge...\wedge s_N$ is a length-one generator of the determinant line
$$\det H^0(X,L):=H^0(X,L)^{\wedge N}$$
and 
$$\Psi_N:H^0(X,L)^{\wedge N}\to H^0(X,L)^{\otimes N}\simeq H^0(X^N,L^{\boxtimes N})$$
is the anti-symmetrization operator. The result now follows from (\ref{equ:anti}).
\end{proof}

Now let as in Corollary A $(E,\psi)$ be a weighted subset and $\nu$ be a probability measure with the Bernstein-Markov property for $(E,\psi)$. For each $k$ let $S_k=(s_j^{(k)})$ be an $L^2(\nu,k\psi)$-orthonormal basis of $H^0(kL)$. Given a weighted subset $(K,\phi)$ we set
$$\cD_k(K,\phi):=\frac{1}{k N_k}\log\Vert\det S_k\Vert_{L^\infty(K,k\phi)}$$
with $N_k:=h^0(kL)$ 
and our goal is thus to show that
$$\lim_{k\to\infty}\cD_k(K,\phi)=\eneq(E,\psi)-\eneq(K,\phi).$$

{\bf Step 1}. We will first show that (ii) of Corollary A is actually equivalent to (ii) of Theorem A. Let thus $\mu$ be a probability measure on $K$ with the Bernstein-Markov property for $(K,\phi)$. Since the $L^2$-norms $L^2(\mu,k\phi)$ and $L^2(\nu,k\psi)$ are induced by scalar products on $H^0(X,kL)$ the ratio of their unit-ball volumes can be expressed as a Gram determinant:
$$
\frac{\vol\cB^2(\nu,k\psi)}{\vol\cB^2(\mu,k\phi)}=\det\left(\langle s_i^{(k)},s_j^{(k)}\rangle_{L^2(\mu,k\phi)}\right)_{i,j}.
$$
By Lemma~\ref{lem:detL2} we thus get
$$
\Vert\det S_k\Vert_{L^2(\mu,k\phi)}^2=N_k!\frac{\vol\cB^2(\nu,k\psi)}{\vol\cB^2(\mu,k\phi)},
$$
or in other words
$$\frac{1}{kN_k}\log\Vert\det S_k\Vert_{L^2(\mu,k\phi)}=\cL_k(\nu,\psi)-\cL_k(\mu,\phi)+\frac{\log N_k!}{2kN_k}.$$
Now $N_k=O(k^n)$ implies 
$$\log N_k!=O(k^n\log k)=o(kN_k),$$
and we thus see that (ii) of Corollary A is equivalent to (ii) of Theorem A. 

{\bf Step 2}. We now prove (i) of Corollary A assuming that there exists a probability measure $\mu$ with the Bernstein-Markov property with respect to $(K,\phi)$ (which is not the general case). We have to show that
\begin{equation}\label{equ:bmdet}\log\Vert\det S_k\Vert_{L^\infty(K,k\phi)}=\log\Vert\det S_k\Vert_{L^2(\mu,k\phi)}+o(kN_k).
\end{equation} 
Let $\e>0$. By the Bernstein-Markov property of $\mu$ with respect to $(K,\phi)$ there exists $C>0$ such that 
\begin{equation}\label{equ:BMexpl} |s(x)|^2_{k\phi}\le Ce^{k\e}\int_X|s|^2_{k\phi}d\mu
\end{equation}
for every $k$, every section $s\in H^0(X,kL)$ and every $x\in X$. Now if  $x_1,...,x_{N_k}$ are points of $X$, then for each $j$ 
$$x\mapsto\det S_k(x_1,...,x_{j-1},x,x_{j+1},...,x_{N_k})$$
is a holomorphic section in  $H^0(X,kL)$. A successive application of (\ref{equ:BMexpl}) thus yields
$$\Vert\det S_k\Vert^2_{L^\infty(X,k\phi)}\le C^{N_k}e^{kN_k\e}\Vert\det S_k\Vert^2_{L^2(\mu,k\phi)},$$
and (\ref{equ:bmdet}) follows.

{\bf Step 3}. We finally show (i) of Theorem A for an arbitrary weighted subset $(K,\phi)$. Note that Step 2 shows in particular that (i) of Corollary A holds when $K=X$, since any smooth volume form has the Bernstein-Markov property with respect to $(X,\phi)$ by Lemma~\ref{lem:BMcont}.  We remark that $\cD_k(X,\cdot)$ is non-increasing, and a successive application of Proposition~\ref{prop:max} to each variable of the holomorphic section $\det S_k$ shows that
$$\cD_k(K,\phi)=\cD_k(X,\phi_K),$$
which is the analogue of (\ref{equ:lk}). We may then conclude by using exactly the same arguments as in Step 2 of the proof of Theorem A, simply replacing $\cL_k$ with $-\cD_k$. 

\subsection{Alternative arguments for an ample line bundle.}
The case of an \emph{ample} line bundle $L$ already covers the $\C^n$ case. For readers primarily interested in this situation we stress that all preliminary results on mixed Monge-Amp\`ere operators in Section~\ref{sec:prelim} are then standard (cf.~for instance~\cite{Dem1,Dem2}), since psh weights with minimal singularities are in fact locally bounded when $L$ is ample.

As we are going to show, somewhat simpler proofs of Theorems A and B can be provided when $L$ is ample. The main point is that Theorem A can then be obtained as direct consequence of the usual Bouche-Catlin-Tian-Zelditch theorem without relying on~\cite{Ber2,Ber3}, whereas Theorem B can be deduced by combining Theorem A with~\cite{Ber2,Ber3}. 

{\bf Proof of Theorem A}. Using the same reasoning as in Step 2 of the proof of Theorem A above, we are reduced to showing that
\begin{equation}\label{equ:amp}
\cL_k(X,\phi_1)-\cL_k(X,\phi_2)\to\cE(P_X\phi_1)-\cE(P_X\phi_2)
\end{equation}
when $\phi_1,\phi_2$ are smooth weights. By taking differences it is even enough to treat the case where $\phi_2$ is smooth and strictly psh, the existence of such a weight $\phi_2$ being guaranteed by the assumption that $L$ is ample. 

Since $P_X\phi_1$ is a continuous psh weight Richberg's regularization theorem (\cite{Ric}, see also.~\cite{Dem1} p.52) yields a sequence of smooth strictly psh weights $\psi_j$ such that
$$\e_j:=\sup_X|P_X\phi_1-\psi_j|$$
tends to $0$ as $j\to\infty$. Since $\cL_k(X,\cdot)$ is non-decreasing and satisfies the scaling property it follows that 
$$|\cL_k(X,\psi_j)-\cL_k(X,P_X\phi_1)|\le\e_j,$$
i.e. 
$$\cL_k(X,\psi_j)\to\cL_k(P_X,\phi_1)=\cL_k(X,\phi_1)$$
as $j\to\infty$ uniformly with respect to $k$. Since we also have $\cE(\psi_j)\to\cE(P_X\phi_1)$   we are thus reduced to the case where $\phi_1$ is smooth strictly psh as well. 

We now fix a smooth volume form $\mu$. Since $\mu$ has the BM property with respect to both $(X,\phi_1)$ and $(X,\phi_2)$, Lemma~\ref{lem:elde} shows that (\ref{equ:amp}) is equivalent to
$$\cL_k(\mu,\phi_1)-\cL_k(\mu,\phi_2)\to\cE(\phi_1)-\cE(\phi_2).$$
But this is just an integrated version of the Bouche-Catlin-Tian-Zelditch theorem (cf.~\cite{Bernsurv} for a particularly simple proof of a weak version suficient for our purpose). Indeed the latter says that the derivative of $\cL_k(\mu,\cdot)$ at a smooth strictly psh weight (which is equal to $\b(\mu,k\phi)$ by Lemma~\ref{lem:smooth}) converges to $\cE'(\phi)=\eq(X,\phi)$ as $k\to\infty$.

{\bf A special case of Theorem B.} Here we assume that $K=X$. If $\phi_1,\phi_2$ are smooth weights and $\mu$ is a smooth volume form Theorem A implies that 
$$\lim_{k\to\infty}\cL_k(\mu,\phi_1)-\cL_k(\mu,\phi_2)=\eneq(X,\phi_1)-\eneq(X,\phi_2).$$
On the other hand the differential of 
$\cL_k(\mu,\cdot)$ at a smooth weight $\phi$ is given by integration against $\beta(\mu,k\phi)$ by Lemma~\ref{lem:smooth} and~\cite{Ber3} (i.e. Theorem~\ref{thm:robert}) implies that 
$$\lim_{k\to\infty}\beta(\mu,k\phi)=\eq(X,\phi)$$ 
Integrating along the line segment betwen $\phi_1$ and $\phi_2$ yields
$$\eneq(X,\phi_1)-\eneq(X,\phi_2)=\int_{t=0}^1dt\int_X(\phi_1-\phi_2)\eq(X,t\phi_1+(1-t)\phi_2),$$
which is equivalent to Theorem B (for $K=X$).

\section{Applications to logarithmic pluri-potential theory}\label{sec:classical}
In this section, we will reinterpret our general results in the special case where $(X,L)=(\PP^n,\cO(1))$ and the compact subsets considered lie in the affine piece $\C^n$. As explained in the introduction, this corresponds to \emph{weighted logarithmic pluri-potential theory} in $\C^n$. 

We choose homogeneous coordinates $[Z_0:...:Z_n]$ on $\PP^n$ such that $Z_0=0$ cuts out the hyperplane at infinity, so that $z_j:=Z_j/Z_0$ define the euclidian coordinates on $\C^n$. The linear form $Z_0$ can be seen as the section in $H^0(\PP^n,\cO(1))$ inducing the constant polynomial $1$ on $\C^n$, and this section enables us to identify \emph{weights} on $\cO(1)$ over $\C^n$ to \emph{functions} on $\C^n$ by
$$\phi\mapsto v:=\phi-\log|Z_0|=-\log|Z_0|_\phi.$$
We then have $dd^c\phi=dd^c v$ on $\C^n$ by the Lelong-Poincar\'e formula, and $\phi$ extends to a psh weight (resp. with minimal singularities) on $\cO(1)$ over $\PP^n$ iff $v$ is a psh function on $\C^n$ such that $v\le\log^+|z|+O(1)$ (resp. $v=\log^+|z|+O(1)$) on $\C^n$. 

If $K$ is a compact subset of $\C^n$, $\mu$ is a probability measure on $K$ and $v\in C^0(K)$ is a continuous function on $K$, then we will talk about the weighted subset $(K,v)$ and the weighted measure $(\mu,v)$. The equilibrium weight of $(K,v)$ is then identified with the usc regularization of Siciak's extremal function attached to $(K,v)$, and will be denoted by $P_Kv$. It is thus a psh function on $\C^n$ such that $P_Kv=\log^+|z|+O(1)$. 

\subsection{Leja's transfinite diameter as an energy}
Denote by $T\subset(\C^{*})^{n}\subset\PP^{n}$ the unit compact torus induced by the toric K{\"a}hler structure of $\PP^{n}$. As is well-known, the equilibrium function of $(T,0)$ is then 
$$\max_{1\le j\le n}\log^{+}|z_j|$$
and the equilibrium measure 
$$\mu_T:=\eq(T,0)$$ 
is then the Haar probability measure on $T$. 
For each $k$, let $S_k$ denote the family of all monomials on $\C^n$ of degree at most $k$, which is an $L^2(\mu_T,0)$-orthonormal basis. Comparing definitions, Leja's transfinite diameter $d_\infty(K,v)$ (cf.~\cite{ST}) is then seen to be defined by
$$\log d_\infty(K,v)=\lim_{k\to\infty}\frac{(n+1)!}{nk^{n+1}}\log\Vert\det S_k\Vert_{L^\infty(K,kv)}$$
provided the limit exists. In the unweighted case ($v=0$), the limit has been proved to exist by Zaharjuta~\cite{Zah}. Corollary A shows that the limit also exists in the weighted case, and unravelling definitions we get

\begin{equation}\label{equ:transfinite}\log d_{\infty}(K,v)=\frac{1}{n}\sum_{j=0}^{n}\int_{\C^{n}}(\max_{i}\log^{+}|z_{i}|-P_Kv)(dd^{c}P_Kv)^{j}\wedge(dd^{c}\max_{i}\log^{+}|z_{i}|)^{n-j}.
\end{equation}
(compare~\cite{Rum,DMR} for the unweighted case).

\subsection{A weighted iterated Robin formula}

As a corollary of the recursion formula (\ref{prop:recursion}) we
get the following weighted generalisation of Rumely's Robin-type formula \cite{Rum}:

\begin{cor}
\label{cor:robin} Let $(K,v)$ be a weighted compact subset of $\C^n$. Then its transfinite
diameter satisfies 
$$\log d_{\infty}(K,v)=\frac{1}{n}\sum_{j=0}^{n}\int_{Y_{j}}(\log\left|Z_{j}\right|-P_Kv)(dd^{c}P_Kv)^{n-j}$$
 where $[Z_{0}:\dots:Z_{n}]$ denote homogeneous
coordinates in $\PP^{n}$, $Y_{0}=\PP^{n}$ and $Y_{j}=\{ Z_{0}=\dots=Z_{j-1}=0\}$
when $j\geq1$. 
\end{cor}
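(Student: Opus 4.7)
The plan is to iterate the recursion formula of Proposition~\ref{prop:recursion} along the flag $\PP^n=Y_0\supset Y_1\supset\cdots\supset Y_n$, where each $Y_{j+1}$ is cut out inside $Y_j$ by the section $Z_j$. Let $\phi_K$ denote the locally bounded psh weight on $\cO(1)$ corresponding to $P_K v$ under the dictionary of Section~\ref{sec:classical}, and let $\phi_T:=\max_{0\le i\le n}\log|Z_i|$ be the continuous psh weight corresponding to $\max_i\log^+|z_i|$. Because $\MA(\phi_T)$ is the Haar probability measure on the compact torus $T\subset(\C^*)^n$, formula~(\ref{equ:transfinite}) rewritten through~(\ref{equ:energy}) reads
$$
\log d_\infty(K,v)=-\frac{n+1}{n}\bigl(\cE_{\PP^n}(\phi_K)-\cE_{\PP^n}(\phi_T)\bigr).
$$

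Set $\xi_j:=\phi_K|_{Y_j}$ and $\eta_j:=\phi_T|_{Y_j}=\max_{i\ge j}\log|Z_i|$; under the natural identification of $(Y_j,\cO(1)|_{Y_j})$ with $(\PP^{n-j},\cO(1))$ via homogeneous coordinates $[Z_j:\cdots:Z_n]$, these are again locally bounded psh weights, and $\eta_j$ is the lower dimensional toric analogue of $\phi_T$. Applying Proposition~\ref{prop:recursion} inside $Y_j$ to the smooth hypersurface $Y_{j+1}$ cut out by $Z_j$ and writing $A_j:=\cE_{Y_j}(\xi_j)-\cE_{Y_j}(\eta_j)$ and $B_j:=\int_{Y_j}\log|Z_j|_{\xi_j}\MA(\xi_j)$, one obtains
$$
(n-j+1)A_j=(n-j)A_{j+1}-B_j+\int_{Y_j}\log|Z_j|_{\eta_j}\MA(\eta_j).
$$
The crucial simplification is that the last integral vanishes at every step: $\MA(\eta_j)$ is (up to a unit volume factor) the Haar measure on the torus $\{|Z_i|=|Z_j|\colon i\ge j\}\subset Y_j$, and on this torus the integrand $\log|Z_j|-\max_{i\ge j}\log|Z_i|$ is identically zero.

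A straightforward descending induction on the simplified recursion $(n-j+1)A_j=(n-j)A_{j+1}-B_j$ then yields $(n+1)A_0=A_n-\sum_{j=0}^{n-1}B_j$. On the single point $Y_n=[0{:}\cdots{:}0{:}1]$, trivializing $\cO(1)|_{Y_n}$ by $Z_n$ gives $\eta_n=0$, so $A_n=\xi_n(Y_n)$, while the natural $j=n$ term $B_n:=\log|Z_n|_{\xi_n}(Y_n)=-\xi_n(Y_n)$ (with the convention that $(dd^c P_K v)^0$ on a point is the Dirac mass). Hence $A_n=-B_n$, and assembling everything,
$$
-n\log d_\infty(K,v)=(n+1)A_0=A_n-\sum_{j=0}^{n-1}B_j=-\sum_{j=0}^{n}B_j,
$$
which is the claimed formula once $B_j$ is unwound as $\int_{Y_j}(\log|Z_j|-P_K v)(dd^c P_K v)^{n-j}$.

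The content of the argument is essentially combinatorial; the only technical checks are that (i) the weights $\xi_j,\eta_j$ remain locally bounded psh weights on $\cO(1)|_{Y_j}$ at each stage, so that Proposition~\ref{prop:recursion} legitimately applies, and (ii) the identification of the restricted toric weight $\eta_j$ with a lower dimensional equilibrium weight, which is what makes the integral $\int_{Y_j}\log|Z_j|_{\eta_j}\MA(\eta_j)$ vanish. Neither presents a real obstacle.
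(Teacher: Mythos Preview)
Your proof is correct and follows essentially the same approach as the paper: both iterate Proposition~\ref{prop:recursion} along the flag $Y_0\supset Y_1\supset\cdots\supset Y_n$, and both rely on the same key observation that the toric equilibrium weight $\phi_T=\max_i\log|Z_i|$ restricts on each $Y_j$ to the lower-dimensional toric equilibrium weight, forcing the integral $\int_{Y_j}\log|Z_j|_{\eta_j}\MA(\eta_j)$ to vanish at every step. Your presentation spells out the telescoping recursion $(n+1)A_0=A_n-\sum_{j=0}^{n-1}B_j$ and the boundary identification $A_n=-B_n$ more explicitly than the paper, which simply says ``the formula follows by induction on $n$'', but the content is identical.
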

\begin{proof} Let $T_0:=T$, $\phi_0=\log|Z_0|$ and $\psi:=\log|Z_0|+v$. By (\ref{equ:transfinite}) we then have 
$$n\log d_{\infty}(K,v)=(n+1)\left(\cE_{Y_{0}}(P_{T_0}\phi_0)-\cE_{Y_{0}}(P_K\psi)\right)$$
with 
$$P_T\phi=\max_{0\leq j\leq n}\log|Z_j|.$$
We thus see that $P_T\phi|_{Y_{1}}$ coincides with the similarly
defined weight $P_{T_1}\phi_1$ on $Y_1$. On the other hand $|Z_{0}|_\phi\equiv 1$ on $T$
and Proposition~\ref{prop:recursion} thus implies 
$$(n+1)\left(\cE_{Y_{0}}(P_{T_0}\phi_0)-\cE_{Y_{0}}(P_K\psi)\right)=n\left(\cE_{Y_1}(P_{T_1}\phi_1)-\cE_{Y_{1}}(P_K\psi|_{Y_{1}})\right)$$
$$+\int_{Y_{0}}(\log|Z_{0}|-P_K\psi)(dd^{c}P_K\psi)^{n},$$
 and the formula follows by induction on $n$. 
\end{proof}
In case $n=1$, this formula relates the \emph{weighted Robin constant} 
$$\gamma(K,v):=\lim_{|z|\rightarrow\infty}\left(v_{K}^{*}(z)-\log|z|\right)$$
to the weighted transfinite diameter by 
$$-\log d_{\infty}(K,v)=\gamma(K,v)+\int_K(P_Kv)dd^{c}(P_Kv),$$
the weighted version of Robin's formula (cf.~\cite{ST}).

\subsection{Pull-back, the resultant and dynamics}\label{sec:resultant}
We first consider the following general dynamics situation. Let $(X,L)$ be a projective manifold endowed with an \emph{ample} line bundle and let $f:X\to X$ be an endomorphism such that 
$f^{*}L=dL$ in the Picard group of $X$ for some integer $d$, called the (first)
\emph{algebraic degree} of $f$. These assumptions imply in particular
that $f$ is a finite morphism, and its topological degree is $e=d^{n}$. We also assume that $d\geq2$,
so that $f$ is not an automorphism. 

We would like to consider the action of $d^{-1}f^*$ on the space of weights on $L$. However the equality $f^*L=dL$, which holds in $\text{Pic}(X)$, only means that $f^*L$ and $dL$ are isomorphic, and a specific choice of isomorphism is required in order to identify weights on $f^*L$ with weights on $dL$. Such a choice is equivalent to that of a lifting of $f$ to a map $F:L\to L$ that is homogeneous of degree $d$ on the fibres. 
 
The choice of a lift $F$ enables to consider the action of $d^{-1}f^*$ on weights of $L$, and the \emph{dynamical Green weight} may then be defined by
$$g_{F}:=\lim_{m\rightarrow\infty}(d^{-1}F^{*})^{m}\phi$$
 where $\phi$ is any given continuous psh weight on $L$. The Green
weight $g_{F}$ is a continuous psh weight, and is the unique fixed point
of $d^{-1}F^{*}$ in the space of continuous weights on $L$
(cf.~for instance Sibony's survey in~\cite{Sib}). The Green weight $g_F$ depends on the specific choice of a lift $F$ and not just on $f$. Indeed we have
\begin{equation}\label{equ:transfo} g_{\lambda F}=g_F+\frac{1}{d-1}\log|\lambda|
\end{equation}
for each $\lambda\in\C^*$.

Now let $(E,\phi)$ be a reference weighted subset of $X$, and define the transfinite diameter (with respect to $(E,\phi)$) of a weighted subset $(K,\psi)$ by
$$d_\infty(K,\psi):=\exp\left(\frac{n+1}{n}\left(\cE(P_E\phi)-\cE(P_K\psi)\right)\right),$$
so that this coincides with Leja's transfinite diameter for weighted compact subsets of $\C^n$ if $(E,\phi)=(T,0)$. 
We then prove the following general pull-back formula:
\begin{thm}
\label{thm:pull} There exists a constant $c>0$ such that for any weighted subset $(K,\psi)$ we have 
$$d_{\infty}(f^{-1}K,d^{-1}f^{*}\psi)=c\, d_{\infty}(K,\psi)^{1/d}$$
 and in fact 
 $$c=\exp\left(\frac{(n+1)(d-1)}{nd}\left(\cE(P_E\phi)-\cE(g_{F})\right)\right).$$ 
\end{thm}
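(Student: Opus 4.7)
My strategy is to reduce the theorem to a scaling identity for the Monge-Amp\`ere energy under the pull-back operator $T\colon\tau\mapsto d^{-1}F^{*}\tau$ on psh weights of $L$; here the lift $F$ is used to identify weights on $f^{*}L$ with weights on $dL$, after which dividing by $d$ maps $dL$-weights to $L$-weights (the map $d\cdot$ being bijective between these affine spaces).

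The first step is the commutation relation
$$P_{f^{-1}K}(d^{-1}f^{*}\psi)=d^{-1}f^{*}P_{K}\psi.$$
Proposition~\ref{prop:pullback} applied to the surjective morphism $\pi=f$ gives $P_{f^{-1}K}(f^{*}\psi)=f^{*}P_{K}\psi$ as weights on $f^{*}L\simeq dL$. Since the envelope $P_K$ commutes with positive scalar multiplication (both the constraint $\tau\le\lambda\eta$ and pluri-subharmonicity rescale covariantly in $\tau$), dividing through by $d$ yields the stated identity on $L$-weights.

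The second step is the key scaling formula
$$\cE(T\tau_{1})-\cE(T\tau_{2})=\frac{1}{d}\bigl(\cE(\tau_{1})-\cE(\tau_{2})\bigr)$$
for any two psh weights with minimal singularities. I will derive it directly from formula (\ref{equ:energy}): the single $d^{-1}$ from $T\tau_{1}-T\tau_{2}=d^{-1}(\tau_{1}-\tau_{2})\circ f$ and the $n$ factors $d^{-1}$ from the wedge of the currents $dd^{c}(T\tau_{i})=d^{-1}f^{*}(dd^{c}\tau_{i})$ produce a prefactor $d^{-(n+1)}$, while passing the pull-back through the integral contributes the topological degree $e=d^{n}$, giving a net factor $d^{-1}$. (Equivalently, one could combine Proposition~\ref{prop:pull-back_en} with the rescaling identity $\cE^{dL}(\eta)=d^{n+1}\cE^{L}(d^{-1}\eta)$.)

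To conclude, since $g_{F}$ is fixed by $T$, taking $\tau_{2}=g_{F}$ in the scaling formula gives $\cE(T\tau)-\cE(g_{F})=\tfrac{1}{d}(\cE(\tau)-\cE(g_{F}))$. Applying this at $\tau=P_{K}\psi$ and using the first step, a short algebraic rearrangement yields
$$\cE(P_{E}\phi)-\cE\bigl(P_{f^{-1}K}(d^{-1}f^{*}\psi)\bigr)=\tfrac{d-1}{d}\bigl(\cE(P_{E}\phi)-\cE(g_{F})\bigr)+\tfrac{1}{d}\bigl(\cE(P_{E}\phi)-\cE(P_{K}\psi)\bigr).$$
Multiplying by $(n+1)/n$ and exponentiating then gives the claimed formula with $c=\exp\bigl(\tfrac{(n+1)(d-1)}{nd}(\cE(P_{E}\phi)-\cE(g_{F}))\bigr)$. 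The only real difficulty is the bookkeeping of identifications via $F$; the resulting dependence of $c$ on the choice of lift is harmless, since by (\ref{equ:transfo}) replacing $F$ by $\lambda F$ shifts $g_{F}$ by a constant, exactly compatible with the affine scaling of the energy functional.
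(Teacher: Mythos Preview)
Your proof is correct and follows essentially the same approach as the paper. Both arguments rest on the same two ingredients: Proposition~\ref{prop:pullback} (giving $P_{f^{-1}K}(d^{-1}f^{*}\psi)=d^{-1}f^{*}P_{K}\psi$) and the scaling identity $\cE(d^{-1}f^{*}\tau_1)-\cE(d^{-1}f^{*}\tau_2)=d^{-1}(\cE(\tau_1)-\cE(\tau_2))$, which the paper invokes via Proposition~\ref{prop:pull-back_en} and you derive directly from (\ref{equ:energy}). The only organisational difference is that the paper first obtains the intermediate expression $c=\exp\bigl(\tfrac{n+1}{n}(\cE(P_E\phi)-\cE(d^{-1}f^{*}P_E\phi))\bigr)$ and then specialises $\tau=g_F$ to rewrite it, whereas you use the fixed-point property of $g_F$ from the outset to reach the final formula in one step.
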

\begin{proof}
Let $\tau$ be a psh weight with minimal singularities on $L$. We have 
$$\cE(P_E\phi)-\cE(d^{-1}f^{*}\tau)=\cE(P_E\phi)-\cE(d^{-1}f^{*}P_E\phi)+\cE(d^{-1}f^{*}P_E\phi)-\cE(d^{-1}f^{*}\tau)$$
$$=\cE(P_E\phi)-\cE(d^{-1}f^{*}P_E\phi)+d^{-1}\left(\cE(P_E\phi)-\cE(\tau)\right)$$
by Proposition~\ref{prop:pull-back_en}. On the other hand Proposition~\ref{prop:pullback} shows that
the equilibrium weight of $(f^{-1}K,d^{-1}f^{*}\psi)$ is $d^{-1}f^{*}P_K\psi$,
hence applying this to $\tau:=P_K\psi$ proves the first assertion
with 
$$c:=\exp\left(\frac{n+1}{n}\left(\cE(P_E\phi)-\cE(d^{-1}f^{*}P_E\phi)\right)\right).$$ 
On the other hand, applying the above relation to $\tau:=g_{F}$ yields
$$\cE(P_E\phi)-\cE(d^{-1}f^{*}P_E\phi)=\frac{d-1}{d}(\cE(P_E\phi)-\cE(g_{F})),$$
hence the second assertion. 
\end{proof}

We now specialise this transformation formula to $\PP^n$ and show how to recover DeMarco-Rumely's result~\cite{DMR}. 
\begin{cor}
Let $f:\PP^n\to\PP^n$ be an endomorphism of degree $d\ge 2$, and let $F:\C^{n+1}\to\C^{n+1}$ be a lifting of $f$ to a $d$-homogeneous polynomial map. Then for every weighted compact subset $(K,\psi)$ we have 
$$d_{\infty}(f^{-1}K,d^{-1}f^{*}\psi)=d_{\infty}(K,\psi)^{\frac{1}{d}}\left|\res(F)\right|^{-1/nd^{n+1}}$$
where $\res(F)$ denotes the \emph{resultant} of $F$. 
\end{cor}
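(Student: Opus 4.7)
By Theorem~\ref{thm:pull} applied to $(X,L) = (\PP^n, \cO(1))$ with reference weighted subset $(T,0)$, whose equilibrium weight is $\psi_0 := P_T\log|Z_0| = \max_{0\le j\le n}\log|Z_j|$, the transformation $d_\infty(f^{-1}K, d^{-1}f^\ast\psi) = c\cdot d_\infty(K,\psi)^{1/d}$ holds with
$$c = \exp\!\left(\frac{(n+1)(d-1)}{nd}\bigl(\cE(\psi_0) - \cE(g_F)\bigr)\right).$$
Thus the proof reduces to the energy identity
\begin{equation}\label{equ:target-plan}
\cE(\psi_0) - \cE(g_F) = -\frac{\log|\res(F)|}{(n+1)(d-1)\,d^n}.
\end{equation}
As a first consistency check, both sides transform compatibly under the $\C^\ast$-action $F \mapsto \lambda F$: formula~(\ref{equ:transfo}) together with $\vol(\cO(1)) = 1$ gives $\cE(g_{\lambda F}) = \cE(g_F) + \tfrac{\log|\lambda|}{d-1}$, while $\res(\lambda F) = \lambda^{(n+1)d^n}\res(F)$ produces the same shift on the right-hand side.

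The plan is to unfold~(\ref{equ:target-plan}) via the iterative recursion that underlies Corollary~\ref{cor:robin}. Its derivation uses only that the weight to which it is applied is continuous, psh, and of minimal singularities on $\cO(1)$, so it applies verbatim with $g_F$ in place of $P_K\psi$. Proceeding inductively along the toric flag $Y_0 = \PP^n \supset Y_1 = \{Z_0=0\} \supset \cdots \supset Y_n$ via Proposition~\ref{prop:recursion}, and using that $\log|Z_j| = \psi_0$ on the compact torus $\{|Z_j|=\cdots=|Z_n|\}\subset Y_j$ that supports $(dd^c\psi_0)^{n-j}|_{Y_j}$, one obtains
$$(n+1)\bigl(\cE(\psi_0) - \cE(g_F)\bigr) = \sum_{j=0}^{n} \int_{Y_j}\bigl(\log|Z_j| - g_F\bigr)(dd^c g_F)^{n-j}\big|_{Y_j}.$$
The problem is thus reduced to identifying this sum of Robin-type integrals with $-(d-1)^{-1}d^{-n}\log|\res(F)|$.

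The main obstacle is this last step: expressing the sum of Robin integrals explicitly in terms of $\res(F)$. This is the residue-theoretic core of~\cite{DMR}. Using the fixed-point equation $F^\ast g_F = d g_F$ together with an induction on $n$, each integral on $Y_j$ unfolds into a contribution depending on the coefficients of $F$ visible through the successive coordinate restrictions; the telescoping sum then recovers $\log|\res(F)|$ via the standard description of the resultant as an iterated discriminant along the toric flag. A natural base case is the monomial lift $F_0(z) = (z_0^d,\ldots,z_n^d)$, for which $g_{F_0} = \psi_0$ and $\res(F_0) = 1$, so both sides of~(\ref{equ:target-plan}) vanish. The scaling invariance noted above guarantees that no spurious multiplicative constant can arise, and one may either invoke the identity of~\cite{DMR} directly with matching normalizations or reproduce their induction.
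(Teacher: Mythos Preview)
Your reduction to the energy identity $\cE(\psi_0)-\cE(g_F)=-\dfrac{\log|\res(F)|}{(n+1)(d-1)d^{n}}$ is correct and coincides with the paper's first step. From there the two arguments diverge completely, and your final step is a genuine gap.

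The paper never attempts to compute $\cE(g_F)$ by integrals on $\PP^n$. It works instead on the \emph{parameter space}: the degree-$d$ endomorphisms of $\PP^n$ form the complement in $\PP^N$ (with $N+1=(n+1)\binom{n+d}{d}$) of the irreducible hypersurface $H=\{\res=0\}$ of degree $(n+1)d^n$. By~(\ref{equ:transfo}) the map $F\mapsto(d-1)\cE(g_F)$ descends to a weight $\tau$ on $\cO_{\PP^N}(1)$ over $\PP^N\setminus H$. The decisive input is Theorem~4.5 of~\cite{BaBer}, giving $dd^c\tau\equiv 0$ on $\PP^N\setminus H$; together with local boundedness near $H$ (Remark~1.3 of~\cite{BaBer}) this extends $\tau$ to a psh weight on all of $\PP^N$ with curvature supported on $H$. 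The Support Theorem for closed positive currents forces $dd^c\tau=c[H]$, and comparing cohomology classes gives $c=1/(n+1)d^n$. This proves the energy identity up to an additive constant, pinned down by evaluating at $F_0(Z)=(Z_0^d,\ldots,Z_n^d)$ exactly as you propose. No Robin-type computation is needed and the combinatorial structure of the resultant is never touched.

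Your route via the iterated Robin formula is sound up to the displayed sum $\sum_j\int_{Y_j}(\log|Z_j|-g_F)(dd^cg_F)^{n-j}$, but identifying that sum with $-(d-1)^{-1}d^{-n}\log|\res(F)|$ is precisely the substantive content, and you do not carry it out. Deferring to~\cite{DMR} is unsatisfactory here: the corollary is explicitly presented as a way of \emph{recovering} the DeMarco--Rumely formula, so invoking it begs the question, and in any case you would still owe a verification that their statement matches your integral with the correct normalizations. The ``induction on $n$ using $F^*g_F=dg_F$'' you sketch does not obviously go through either: $f$ need not preserve the flag $Y_\bullet$, so $g_F|_{Y_j}$ is not a Green weight for any endomorphism of $Y_j$, and there is no evident recursive structure to exploit. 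The scaling check and the base case $F_0$ are correct but only fix the constant once the identity is known; they do not substitute for a proof.
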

\begin{proof} Our arguments mostly follow~\cite{BaBer} and~\cite{DMR} with some simplifications. The space of all $d$-homogeneous polynomial maps $F:\C^{n+1}\to\C^{n+1}$ is an affine space $\C^{N+1}$ of dimension 
$$N+1:=(n+1){{n+d \choose d}}.$$
Each such map $F$ induces a rational map $f:\PP^n\dashrightarrow\PP^n$. By~\cite{GKZ} (p.105 and p.427) there exists an \emph{irreducible} hypersurface $H$ of $\PP^N$ of degree $(n+1)d^n$ such that $f$ is an endomorphism iff $F\in\pi^{-1}(\PP^N-H)$, where $\pi:\C^{N+1}-\{0\}\to\PP^N$ denotes the quotient map.  The variety of all degree $d$ endomorphisms $f$ of $\PP^n$ is thus identified with the smooth affine variety $\PP^N-H$. The irreducible homogeneous polynomial of degree $(n+1)d^{n}$ in $N+1$ variables cutting out $H$ is called the \emph{resultant} and is denoted by $\res$. It is normalised by the condition $\res(F_0)=1$
for 
$$F_0(Z_{0},...,Z_{n})=(Z_{0}^{d},...,Z_{n}^{d}).$$

The transformation formula (\ref{equ:transfo}) above implies 
$$\cE(g_{\lambda F})=\cE(g_F)+\frac{1}{d-1}\log|\lambda|,$$
so that $F\mapsto(d-1)\cE(g_F)$ descends to a weight $\tau$ on $\cO(1)$ over $\PP^N-H$. 

The main point is now Theorem 4.5 of~\cite{BaBer}, which says that $dd^c\tau\equiv 0$ on $\PP^N-H$. On the other hand Remark 1.3 of~\cite{BaBer} implies that $\tau$ is locally bounded from above near each point of $H$, hence extends to a psh weight on $\cO(1)$ over $\PP^N$. The closed positive $(1,1)$-current $dd^c\tau$ on $\PP^N$ is supported on the irreducible hypersurface $H$, thus the Support Theorem for closed positive currents (see~\cite{Dem1} Corollary 2.14 p.165) implies 
$$dd^c\tau=c[H]$$
for some $c>0$, and in fact $c=1/(n+1)d^n$ since $H$ has degree $(n+1)d^n$. This means in turn that there exists a constant $C>0$ such that

\begin{equation}\label{equ:baber} \cE(g_F)=\frac{1}{(n+1)(d-1)d^{n}}\log|\res(F)|+C\end{equation}
for all $F$. This corresponds to Proposition~4.9 of~\cite{BaBer}, whose proof has been reformulated here. Now the Green weight of the above map $F_0$ is easily seen to be
$$g_{F_0}=\max_j\log|Z_j|,$$ 
which is also the equilibrium weight $P_{\T^n}0$ of $(\T^n,0)$. Since we have $\res(F_0)=1$, we infer that $C=\eneq(T,0)$, so that (\ref{equ:baber}) becomes
$$\exp\left(\frac{(n+1)(d-1)}{nd}\left(\cE(P_T0)-\cE(g_F)\right)\right)=|\res(F)|^{-1/nd^{n+1}},$$
and the result follows. 
\end{proof}

\section{\label{sec:equi}Analytic torsion and equidistribution of small points}

\subsection{Asymptotics of the analytic torsion}
Let $X$ be a compact K{\"a}hler manifold equiped with a fixed K{\"a}hler form
$\omega$ and induced measure $\omega^n$. If $L$ is a line bundle over $X$, recall that the complex line 
$$\det H^{\bullet}(L):=\sum_{q\ge 0}(-1)^q\det H^q(L)$$
(in our additive notation for tensor products of lines) is called the
\emph{determinant of cohomology} of $L$. If $\phi$ is a smooth weight
on $L$, then $\det H^\bullet(L)$ can be equiped with a natural $L^2$
Hermitian metric $|\cdot|_{L^2(\phi)}$, induced by the $L^2$ metric
associated with $\phi$ and the measure $\omega^n$ at the level of harmonic representatives. If $\psi$ is another smooth weight on $L$, the quotient of the corresponding $L^2$ metrics on $\det H^\bullet(L)$ yields a number  
$$\log\frac{|\cdot|^2_{L^2(\psi)}}{|\cdot|^2_{L^2(\phi)}}=\sum_{q\ge
  0}(-1)^q\log\frac{\vol\cB^2_q(\phi)}{\vol\cB^2_q(\psi)},$$
where we denote by $\cB^2_q$ the $L^2$-ball of $H^q(X,L)$ for any
$q\ge 0$. 

The \emph{Ray-Singer analytic torsion} is defined by
$$T(\phi):=\sum_{q\ge 0}(-1)^q q\log\mathrm{det}_{>0}\Delta''_q,$$
where $\Delta''_q$ denotes the anti-holomorphic Laplacian
$\overline{\partial}\overline{\partial}^*+\overline{\partial}^*\overline{\partial}$
acting on smooth $L$-valued $(0,q)$-forms on $X$, and
$\mathrm{det}_{>0}$ denotes  the zeta-regularized product of its
non-zero eigenvalues $0<\lambda_1\le\lambda_2\le...$, i.e. the
derivative at $z=0$ of the meromorphic continuation to $\C$ of the zeta-function
$\sum_j\lambda_j^{-z}$. 

The \emph{Quillen metric} on the complex line $\det H^\bullet(L)$ is then the twisted metric $$|\cdot|^2_{Q(\phi)}:=|\cdot|^2_{L^2(\phi)}e^{-T(\phi)}.$$
Theorem 1.2.3 of~\cite{BGS} (cf.~also~\cite{Sou},
Corollary 1 p.132) expresses \emph{variations} of Quillen metrics in
terms of secondary Bott-Chern forms. It implies in particular in our
case that 
\begin{equation}\label{equ:quillen}
\sum_{q\ge
  0}(-1)^q\log\frac{\vol\cB^2_q(\phi)}{\vol\cB^2_q(\psi)}+T(\phi)-T(\psi)=\int_X \widetilde{\mathrm{ch}}(\phi,\psi)\wedge\mathrm{td}(\omega)
\end{equation}
for any two smooth weights $\phi,\psi$ on $L$, where $\mathrm{td}(\omega)=1+\mathrm{Ricci}(\omega)/2+$(higher degree
terms) is the Todd form of the Hermitian bundle $(T_X,\omega)$ and
$\mathrm{\widetilde{ch}}$ denotes the secondary form of the Chern
character. Formula (\ref{equ:bott-chern}) shows that
\begin{equation}\label{equ:bott-chern_bis}\cE(\phi)-\cE(\psi)=\frac{n!}{2}\int_X\mathrm{\widetilde{ch}}(\phi,\psi).
\end{equation}

If $L$ is furthermore ample, then the higher cohomology of $kL$ vanishes for $k\gg 1$, thus (\ref{equ:quillen}) and (\ref{equ:bott-chern_bis}) imply
\begin{equation}\label{equ:quillenasym} 
\log\frac{\vol\cB^2(k\phi)}{\vol\cB^2(k\psi)}+T(k\phi)-T(k\psi)=\frac{2k^{n+1}}{n!}\left(\cE(\phi)-\cE(\psi)\right)+O(k^n).
\end{equation}
If $\phi$ is a smooth weight such that $dd^c\phi>0$ (hence $L$ is ample), the main result of~\cite{BV} is the following two-term asymptotic expansion of the analytic torsion:
$$T(k\phi)=\frac{1}{2}\int_X\log\frac{(kdd^c\phi)^n}{\omega^n}\exp(kdd^c\phi)+o(k^n)$$
$$=\frac{k^n\log k}{2(n-1)!}\vol(L)+\frac{k^n}{2n!}\int_X\log(\frac{dd^c\phi)^n}{\omega^n})(dd^c\phi)^n+o(k^n),$$
and in particular $T(k\phi)=o(k^{n+1})$. 

On the other hand, if $L$ is still ample but $\phi$ has arbitrary curvature, Theorem 10 of~\cite{BV} merely says that $T(k\phi)=O(k^{n+1})$. We will now explain how to refine this estimate using our results:

\begin{thm}\label{thm:torsion} Let $\omega$ be a K{\"a}hler metric on $X$. If $L$ is an ample line bundle and $\phi$ is a smooth weight on $L$ with arbitrary curvature, then 
$$\lim_{k\to\infty}\frac{n!}{2k^{n+1}}T(k\phi)=\cE(\phi)-\cE(P_X\phi).$$
\end{thm}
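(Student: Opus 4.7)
The plan is to reduce the arbitrary-curvature case to the strictly positive-curvature case via the Bismut--Gillet--Soul\'e double-transgression formula~(\ref{equ:quillenasym}), using Theorem~A to control the variation of $L^2$-ball volumes and the Bismut--Vasserot asymptotics to kill the torsion of a well-chosen reference weight. Concretely, since $L$ is ample I would fix an auxiliary smooth \emph{strictly} psh weight $\psi$ on $L$. For such $\psi$ one has $P_X\psi=\psi$, hence $\cE(\psi)=\cE(P_X\psi)$, and the Bismut--Vasserot theorem gives $T(k\psi)=o(k^{n+1})$.

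Next I would identify the $L^2$-volume ratio on the left-hand side of~(\ref{equ:quillenasym}) with the $\cL$-functionals of Theorem~A. Let $\mu_\omega:=\omega^n/\int_X\omega^n$, a probability measure on $X$ which by Lemma~\ref{lem:BMcont} has the Bernstein--Markov property with respect to every continuous weight on $L$. Rescaling from $\omega^n$ to $\mu_\omega$ multiplies every $L^2$-ball by a common positive scalar, hence does not affect the ratio, so
$$\log\frac{\vol\cB^2(k\phi)}{\vol\cB^2(k\psi)}=2kN_k\bigl(\cL_k(\mu_\omega,\phi)-\cL_k(\mu_\omega,\psi)\bigr).$$
Applying Theorem~A(ii) with $(K_1,\phi_1)=(X,\phi)$, $(K_2,\phi_2)=(X,\psi)$ and $\mu_1=\mu_2=\mu_\omega$, together with the asymptotics $N_k=\vol(L)k^n/n!+o(k^n)$, I obtain
$$\log\frac{\vol\cB^2(k\phi)}{\vol\cB^2(k\psi)}=\frac{2k^{n+1}}{n!}\bigl(\cE(P_X\phi)-\cE(\psi)\bigr)+o(k^{n+1}).$$

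Substituting this estimate and $T(k\psi)=o(k^{n+1})$ into~(\ref{equ:quillenasym}), the $\cE(\psi)$ contributions cancel and I arrive at
$$T(k\phi)=\frac{2k^{n+1}}{n!}\bigl(\cE(\phi)-\cE(P_X\phi)\bigr)+o(k^{n+1}),$$
which is equivalent to the statement of the theorem after dividing by $2k^{n+1}/n!$. Since Theorem~A, the Bismut--Gillet--Soul\'e formula~(\ref{equ:quillenasym}) and Bismut--Vasserot are all available, there is no genuinely new obstacle; the only content of the argument is the bookkeeping above, and the conceptual point is the cancellation of the reference weight: the $\cE(\psi)$ term produced by~(\ref{equ:quillenasym}) is exactly matched by the $\cE(P_X\psi)=\cE(\psi)$ term produced by Theorem~A, leaving behind precisely the ``defect'' $\cE(\phi)-\cE(P_X\phi)$ which measures how far $\phi$ is from being psh.
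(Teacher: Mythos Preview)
Your proof is correct and follows essentially the same route as the paper: fix a strictly psh reference weight $\psi$, use Bismut--Vasserot to kill $T(k\psi)$, and combine~(\ref{equ:quillenasym}) with Theorem~A to isolate $\cE(\phi)-\cE(P_X\phi)$. The only cosmetic difference is that the paper passes through $L^\infty$-balls via Lemma~\ref{lem:elde} before invoking Theorem~A(i), whereas you apply Theorem~A(ii) directly to the normalised volume form $\mu_\omega$; the two are equivalent.
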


\begin{proof} Since $L$ is ample, we can choose another smooth weight $\psi$ on $L$ with
  $dd^c\psi>0$, so that $T(k\psi)=o(k^{n+1})$ by the result
  of~\cite{BV} recalled above. On the other hand Lemma~\ref{lem:elde} implies
$$\log\frac{\vol\cB^2(k\phi)}{\vol\cB^2(k\psi)}=\log\frac{\vol\cB^\infty(X,k\phi)}{\vol\cB^\infty(X,k\psi)}+o(k^{n+1}),$$ and (\ref{equ:quillenasym}) thus yields
$$\log\frac{\vol\cB^\infty(X,k\phi)}{\vol\cB^\infty(X,k\psi)}+T(k\phi)=\frac{2k^{n+1}}{n!}\left(\cE(\phi)-\cE(\psi)\right)+o(k^{n+1}).$$
Theorem A now yields the result.
\end{proof}

\begin{rem} We see that for a smooth metric on an ample line bundle Theorem A is in fact \emph{equivalent} to the above estimate for the analytic torsion. 
\end{rem}

As a consequence of their result on the asymptotics of the analytic
torsion, Bismut-Vasserot gave in Theorem 10 of~\cite{BV} an asymptotic comparison result for $L^2$ metrics induced by two different volume forms. We now give a simple proof of (a generalisation of) that result:
\begin{thm} Let $L$ be a big line bundle and $\phi$ be an arbitrary smooth weight on $L$. For any two positive measures $\mu,\nu$ on $X$, we then have 
$$\lim_{k\to\infty}\frac{1}{N_k}\log\frac{\vol\cB^2(\nu,k\phi)}{\vol\cB^2(\mu,k\phi)}=\int_X\log\left(\frac{\mu}{\nu}\right)\eq(X,\phi).$$
\end{thm}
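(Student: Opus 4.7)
The plan is to mimic Step~1 of the proof of Theorem~A, but varying the measure rather than the weight. Assuming that $\mu$ and $\nu$ are smooth positive volume forms (so that $f:=d\mu/d\nu$ is smooth and positive and $g:=\log f$ is bounded and continuous), I will interpolate between them by $\mu_{t}:=f^{t}\nu$, so that $\mu_{0}=\nu$ and $\mu_{1}=\mu$. Each $\mu_{t}$ is again a smooth positive volume form. Fixing an $L^{2}(\nu,k\phi)$-orthonormal basis $(s_{1},\dots,s_{N_{k}})$ of $H^{0}(kL)$ and setting $H_{k}(t)_{ij}:=\int_{X}s_{i}\bar{s}_{j}\,e^{-2k\phi}\,d\mu_{t}$, the standard formula for the volume of a Hermitian ellipsoid gives
\[
\frac{\vol\cB^{2}(\nu,k\phi)}{\vol\cB^{2}(\mu_{t},k\phi)}=\det H_{k}(t),
\]
with $H_{k}(0)=I$.

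Next I will differentiate $\log\det H_{k}(t)$ using Jacobi's formula
\[
\frac{d}{dt}\log\det H_{k}(t)=\tr\bigl(H_{k}(t)^{-1}\dot{H}_{k}(t)\bigr),
\]
evaluating this basis-invariant trace at a given $t_{0}$ in an $L^{2}(\mu_{t_{0}},k\phi)$-orthonormal basis $(\tilde{s}_{i})$ of $H^{0}(kL)$. In such a basis $H_{k}(t_{0})=I$ while $\dot{H}_{k}(t_{0})_{ij}=\int g\,\tilde{s}_{i}\overline{\tilde{s}_{j}}\,e^{-2k\phi}\,d\mu_{t_{0}}$, so
\[
\frac{d}{dt}\log\det H_{k}(t)\Big|_{t=t_{0}}=\int_{X}g\,\rho(\mu_{t_{0}},k\phi)\,d\mu_{t_{0}}=N_{k}\int_{X}g\,\beta(\mu_{t_{0}},k\phi).
\]
This is the analogue in the $\mu$-direction of Lemma~\ref{lem:smooth}. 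Integrating from $0$ to $1$ then yields the key formula
\[
\frac{1}{N_{k}}\log\frac{\vol\cB^{2}(\nu,k\phi)}{\vol\cB^{2}(\mu,k\phi)}=\int_{0}^{1}dt\int_{X}g\,\beta(\mu_{t},k\phi).
\]

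To finish, I will apply Theorem~\ref{thm:robert} to each $\mu_{t}$: since $\mu_{t}$ is a smooth positive volume form and $\phi$ is smooth, the Bergman measures $\beta(\mu_{t},k\phi)$ converge weakly to $\eq(X,\phi)$ as $k\to\infty$, and because $g$ is continuous this gives $\int g\,\beta(\mu_{t},k\phi)\to\int g\,\eq(X,\phi)$ pointwise in $t\in[0,1]$. The trivial bound $\bigl|\int g\,\beta(\mu_{t},k\phi)\bigr|\le\sup_{X}|g|$ supplies dominated convergence in $t$, and since the limit is independent of $t$ we conclude
\[
\lim_{k\to\infty}\frac{1}{N_{k}}\log\frac{\vol\cB^{2}(\nu,k\phi)}{\vol\cB^{2}(\mu,k\phi)}=\int_{X}\log(\mu/\nu)\,\eq(X,\phi).
\]
I expect the only genuine technical point to be the trace/derivative computation of the middle step, identifying $N_{k}^{-1}(d/dt)\log\det H_{k}(t)$ with the Bergman-measure integral $\int g\,\beta(\mu_{t},k\phi)$; all the deep analytic content is then packaged into the Bergman kernel asymptotic of Theorem~\ref{thm:robert}. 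If one wished to drop smoothness and treat genuinely singular positive measures, the main extra difficulty would be extending Theorem~\ref{thm:robert} via an approximation argument, and ensuring that $\log(\mu/\nu)$ remains integrable against $\eq(X,\phi)$.
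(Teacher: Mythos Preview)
Your proof is correct and follows essentially the same route as the paper's: interpolate geometrically between $\mu$ and $\nu$, identify the $t$-derivative of the log volume ratio with $N_k\int g\,\beta(\mu_t,k\phi)$, then apply Theorem~\ref{thm:robert} and dominated convergence. The one cosmetic difference is that the paper avoids redoing the Jacobi/Gram computation by observing that $\cB^2(e^{-f}\mu,\phi)=\cB^2(\mu,\phi+f/2)$, which reduces the variation in the measure to a variation in the weight and lets Lemma~\ref{lem:smooth} be invoked directly; your direct computation via $\tr(H_k^{-1}\dot H_k)$ in an orthonormal basis at $t_0$ achieves exactly the same thing.
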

\begin{proof} Note that if $f$ is a function on $X$ we have $\cB^2(e^{-f}\mu,\phi)=\cB^2(\mu,\phi+2f)$. Now let $f:=\log(\mu/\nu)$ and $\mu_t:=e^{-tf}\mu$ for $t\in\R$, so that $\mu_0=\mu$ and $\mu_1=\nu$. By the above remark, Lemma~\ref{lem:smooth} implies that 
$$\frac{d}{dt}\log\vol\cB^2(\mu_t,k\phi)=N_k\int_Xf\beta(\mu_t,\phi).$$
We thus get
$$\log\frac{\vol\cB^2(\nu,k\phi)}{\vol\cB^2(\mu,k\phi)}=N_k\int_{t=0}^1dt\int_Xf\beta(\mu_t,k\phi)$$
and the result follows by dominated convergence since for each $t$ we have $\beta(\mu_t,k\phi)\to\eq(X,\phi)$ by Theorem~\ref{thm:robert}.
\end{proof}

\subsection{\label{sec:heights} Adelic heights.}
Following the discussion in the introduction, let $X$ be a smooth (irreducible) projective variety over $\Q$ and $L$ be a big line bundle on $X/\Q$. Suppose given \emph{once and for all} a collection $(\phi_p)$ of continuous weights on $L_{\C_p}$ over $X(\C_p)$ for every prime $p$ such that all but finitely of them are induced by a model of $X$ over $\Z$. If $\phi$ is a continuous weight on $L_\C$, recall that 
$$\ela(\phi)=\frac{1}{kN_k}\log\vol_k\cB^\A(k\phi)$$
where $\cB^\A$ denotes the adelic unit-ball defined by~(\ref{equ:ad_ball}) 

By the adelic version of Minkowski's theorem (cf.~Appendix A of~\cite{BG}), for every $\e>0$ there exists a non-zero $s\in H^{0}(L)_\Q$ such that 
$$\log\Vert s\Vert_{L^\infty(\phi)}\le-\cL^\A_1(\phi) +\log 2+\e$$
and $\log\Vert s\Vert_{L^\infty(\phi_p)}\le 0$ for all $p$.

On the other hand, recall that the \emph{height} of a point $x\in X(\overline{\Q})$
is defined by 
\begin{equation}\label{equ:height} h^\A_{\phi}(x):=-\frac{1}{\deg(x)}\sum_{y\in Gx}\left(\log|s(y)|_{\phi}+\sum_p\log|s(y)|_{\phi_p}\right)
\end{equation}
where $G$ denotes
the absolute Galois group, $Gx$ is the (finite)
Galois orbit of $x$ and $s$ is a rational section of $L$ defined over $\Q$ such that $x$ is neither a pole nor a zero of $s$. The right-hand side of (\ref{equ:height}) is indeed independent of the choice of
$s$ by the product formula, and the sum $\sum_p$ only involves
finitely many terms. Note that $h^\A_{k\phi}(x)=k h^\A_{\phi}(x)$. 

If we use sections $s\in H^0(kL)_\Q$ provided by Minkowski's theorem to compute heights, we see by (\ref{equ:height}) that 
$$h^\A_{\phi}(x)\ge\ela(\phi)-\frac{\log 2}{k}$$
for any $x\in X(\overline{\Q})$ not in the zero divisor of $s$, where the adelic $\cL$-functionals $\ela$ are defined by~\ref{equ:ela}. As a consequence, if $x_{j}\in X(\overline{\Q})$ is a \emph{generic} sequence, i.e.~a sequence converging to the generic point of $X$ in the Zariski topology, then for each $k$ we get
$$\liminf_{j}h^\A_{\phi}(x_{j})\ge\ela(\phi)-\frac{\log 2}{k},$$
and we infer
\begin{equation}\label{equ:lowerbound}
\liminf_{j\to\infty}h^\A_{\phi}(x_{j})\ge\ena(\phi).
\end{equation}
Note that we have
$$\ena(\phi)=\frac{\widehat{\vol}(\overline{L})}{(n+1)\vol(L)}$$
in the notations of~\cite{CLT}, p.15, and (\ref{equ:lowerbound}) is thus equivalent to Lemma 5.1 of the same~\cite{CLT}. 

The main point in the proof of Theorem D is the following result.
\begin{lem}\label{lem:arith} The function $\ena(\cdot)$ 
is differentiable at any continuous weight $\phi$ such that $\ena(\phi)\in\R$. Its directional derivatives are given by integration against the equilibrium measure $\eq(X(\C),\phi)$.
\end{lem}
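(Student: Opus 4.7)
I will reduce the differentiability of $\ena$ at $\phi$ to that of $\eneq(X(\C),\cdot)$ provided by Theorem~B, using Theorem~A(ii) as the bridge between adelic and complex volumes.

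First, I isolate the archimedean dependence of $\ela$. Fix a $\Z$-structure on $H^0(kL)$ coming from an integral model of $(X,L)$, and use it to normalise local Haar measures: Lebesgue measure on $H^0(kL)_\R$ of covolume $1$ for the corresponding lattice, and Haar measure on each $H^0(kL)_{\Q_p}$ of mass $1$ on $H^0(kL)_{\Z_p}$. The standard adelic lattice computation identifies $\vol^\A_k$ with the product of these local measures, and since almost all $\phi_p$ come from the integral model, the factor $\prod_p\vol_p\cB^\infty_{\Q_p}(k\phi_p)$ is a finite constant $D_k$ independent of $\phi$. Hence
$$\ela(\phi)-\ela(\phi') = \frac{1}{kN_k}\bigl(\log\vol_\R\cB^\infty_\R(k\phi) - \log\vol_\R\cB^\infty_\R(k\phi')\bigr).$$

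Next, I compare this archimedean real $L^\infty$-difference to the complex $\cL_k$-difference. Let $\mu$ be a smooth volume form on $X(\C)$; by Lemma~\ref{lem:BMcont} it has the Bernstein--Markov property with respect to every continuous weight. Working in a $\Q$-basis of $H^0(kL)$, both the real and the complex $L^2$-balls of $(\mu,k\phi)$ are described by the same Gram matrix $G(k\phi)_{ij}=\langle s_i,s_j\rangle_{L^2(\mu,k\phi)}$, and a direct Lebesgue-volume computation gives $\log\vol_\R\cB^2_\R(\mu,k\phi)=-\tfrac12\log\det G(k\phi)+\alpha_{N_k}$ and $\log\vol\cB^2(\mu,k\phi)=-\log\det G(k\phi)+\beta_{N_k}$, with $\alpha_{N_k},\beta_{N_k}$ independent of $\phi$. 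Taking $\phi$-differences then yields
$$\frac{1}{kN_k}\bigl(\log\vol_\R\cB^2_\R(\mu,k\phi) - \log\vol_\R\cB^2_\R(\mu,k\phi')\bigr) = \cL_k(\mu,\phi) - \cL_k(\mu,\phi').$$
Running the real analogue of Lemma~\ref{lem:elde}, in which the Bernstein--Markov inequality $\Vert s\Vert_{L^\infty(X(\C),k\phi)}\le C_k\Vert s\Vert_{L^2(\mu,k\phi)}$ with $\log C_k=o(k)$ gives the sandwich $C_k^{-1}\cB^2_\R(\mu,k\phi)\subset\cB^\infty_\R(k\phi)\subset\cB^2_\R(\mu,k\phi)$, one obtains $\log\vol_\R\cB^\infty_\R(k\phi)=\log\vol_\R\cB^2_\R(\mu,k\phi)+o(kN_k)$. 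Combining these with Theorem~A(ii) yields
$$\lim_{k\to\infty}\bigl(\ela(\phi)-\ela(\phi')\bigr) = \eneq(X(\C),\phi) - \eneq(X(\C),\phi').$$

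To conclude, assume $\ena(\phi)\in\R$. Since the limit above exists and is finite, for any continuous $\phi'$ we may pull it outside the $\limsup$:
$$\ena(\phi') = \limsup_k\ela(\phi') = \ena(\phi) + \eneq(X(\C),\phi') - \eneq(X(\C),\phi).$$
Thus $\ena$ and $\eneq(X(\C),\cdot)$ differ by a constant in a neighbourhood of $\phi$, so Theorem~B immediately gives the G\^ateaux differentiability of $\ena$ at $\phi$, with derivative $u\mapsto\int_{X(\C)}u\,\eq(X(\C),\phi)$. The main technical obstacle is the normalisation bookkeeping in the first step: one must check that the product of the locally normalised Haar measures actually reproduces the globally normalised $\vol^\A_k$. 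This is standard adelic number theory but requires the $\Z$-lattice used to normalise each local measure to match the $\Q$-structure defining $H^0(kL)_\A/H^0(kL)_\Q$; once this is arranged, the rest is a direct assembly of Theorems~A and~B with the Bernstein--Markov comparison.
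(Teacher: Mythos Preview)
Your proof is correct and follows essentially the same route as the paper's: isolate the archimedean contribution of $\ela$, pass from real $L^\infty$-balls to real $L^2$-balls via Bernstein--Markov, compare real and complex $L^2$-ball volumes through Gram determinants, invoke Theorem~A, and finish with the $\limsup$ identity and Theorem~B. The only noteworthy difference is packaging: the paper isolates the real/complex comparison as a separate lemma (Lemma~\ref{lem:real}) and computes the absolute ratio $\vol_\R(\cB^2_\R)^2/\vol(\cB^2)=N_k!/((N_k/2)!)^2$ via Stirling, whereas you work with differences throughout, so the dimension-dependent constants $\alpha_{N_k},\beta_{N_k}$ simply cancel---a mild streamlining.

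One point to tighten: your claim that the real and complex $L^2$-balls are governed by the \emph{same} Gram matrix $G(k\phi)$ needs $G(k\phi)$ to be real-valued on a $\Q$-basis. This holds provided the volume form $\mu$ \emph{and} the weight $\phi$ are invariant under complex conjugation on $X(\C)$; otherwise the real ball is described by $\mathrm{Re}\,G(k\phi)$, and $\det(\mathrm{Re}\,G)$ need not equal $\det G$. The paper handles this by taking $\mu$ conjugation-invariant (and implicitly assumes the same for $\phi$, as is standard in the adelic setting); you should state this hypothesis explicitly.
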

\begin{proof} Since the Haar measure on 
$$H^0(kL)_\A\subset H^0(kL)_\R\times\Pi_p H^0(kL)_{\Q_p}$$ 
is induced by a product measure, we see that variations of adelic $\cL$-functionals are given by
$$\ela(\psi)-\ela(\phi)=\frac{1}{kN_k}\log\frac{\vol^\R_k\cB^\infty_\R(k\psi)}{\vol^\R_k\cB^\infty_\R(k\phi)}$$
where
$$\cB^\infty_\R(\cdot):=\cB^\infty(\cdot)\cap H^0(kL)_\R$$ 
denotes the unit-ball of the sup-norm on the $\R$-vector space $H^0(kL)_\R$ of $\R$-sections, and $\vol_k^\R$ denotes Lebesgue measure on the latter space. By Lemma~\ref{lem:real} below, we get
$$\lim_{k\to\infty}\frac{1}{kN_k}\log\frac{\vol^\R_k\cB^\infty_\R(k\psi)}{\vol^\R_k\cB^\infty_\R(k\phi)}=\lim_{k\to\infty}\frac{1}{2kN_k}\log\frac{\vol_k\cB^\infty(k\psi)}{\vol_k\cB^\infty(k\phi)},
$$
where $\cB^\infty$ denotes as before the unit-ball of the sup-norm in the \emph{complex} vector space $H^0(kL)_\C$ of $\C$-sections and $\vol_k$ is Lebesgue measure on that space. We now conclude by Theorems A and B, using the trivial relation
$$\limsup_{k\to\infty}a_k-\limsup_{k\to\infty}b_k=\lim_{k\to\infty}(a_k-b_k)$$
provided the right-hand limit exists (and is finite).  
\end{proof}

\begin{lem}\label{lem:real} Let $X$ be a smooth projective variety defined over $\R$, and let $L$ be a big line bundle on $X/\R$. Let $\phi$ be a continuous weight over $X(\C)$, and denote by $\cB^\infty_\R(\phi)$ the unit-ball of the sup-norm in $H^0(L)_\R$. Then 
$$\log\frac{\vol_k^\R\left(\cB^\infty_\R(k\phi)\right)^2}{\vol\cB^\infty(k\phi)}=o(kN_k).$$
\end{lem}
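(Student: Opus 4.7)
The plan is to reduce from $L^\infty$-balls to $L^2$-balls via the Bernstein-Markov property, and then compute the resulting ellipsoid volume ratio explicitly. First I would fix a smooth positive volume form $\mu$ on $X(\C)$ invariant under complex conjugation $c$, obtained by averaging. By Lemma~\ref{lem:BMcont}, for every $\e>0$ and $k$ large enough,
$$\|s\|_{L^2(\mu,k\phi)}\le\|s\|_{L^\infty(X(\C),k\phi)}\le e^{k\e}\|s\|_{L^2(\mu,k\phi)}$$
for every $s\in H^0(kL)_\C$. Intersecting with $H^0(kL)_\R$ preserves these inclusions, so writing $\cB^2_\R(\mu,k\phi):=\cB^2(\mu,k\phi)\cap H^0(kL)_\R$ we obtain
$$\log\frac{\vol\cB^\infty(k\phi)}{\vol\cB^2(\mu,k\phi)}=o(kN_k),\qquad\log\frac{\vol_k^\R\cB^\infty_\R(k\phi)}{\vol_k^\R\cB^2_\R(\mu,k\phi)}=o(kN_k),$$
reducing the problem to showing $\log[(\vol_k^\R\cB^2_\R(\mu,k\phi))^2/\vol\cB^2(\mu,k\phi)]=O(N_k)$.

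Next I would observe that in the arithmetic setting $\phi$ is $c$-invariant (since $L$ is defined over $\R$ and $\phi$ is the archimedean weight in a collection indexed by the places of $\Q$). The Hermitian form $h_k(s,t):=\int_{X(\C)}\langle s,t\rangle_{k\phi}\,d\mu$ defining the complex $L^2$-ball then restricts to a real inner product on $H^0(kL)_\R$: for real $u,v$ the integrand $\langle u(x),v(x)\rangle e^{-2k\phi(x)}$ is $c$-invariant (pairings of real sections satisfy $\langle u(\bar x),v(\bar x)\rangle=\overline{\langle u(x),v(x)\rangle}$, while $\phi$ and $\mu$ are $c$-invariant), hence its integral is real. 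Choosing an $h_k$-orthonormal real basis of $H^0(kL)_\R$, which is automatically a unitary $\C$-basis of $H^0(kL)_\C$, identifies $\cB^2(\mu,k\phi)$ with the Euclidean unit ball in $\R^{2N_k}$ and $\cB^2_\R(\mu,k\phi)$ with the Euclidean unit ball in $\R^{N_k}$. With the induced compatible Lebesgue measures, Stirling's formula yields
$$\log\frac{(\vol_k^\R\cB^2_\R(\mu,k\phi))^2}{\vol\cB^2(\mu,k\phi)}=\log\frac{N_k!}{\Gamma(N_k/2+1)^2}=N_k\log 2+O(\log N_k)=O(N_k),$$
which is $o(kN_k)$ and completes the argument.

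The essential step is the Bernstein-Markov reduction, which converts the comparison of two non-ellipsoidal convex bodies into a comparison of ellipsoids with closed-form volumes. The $c$-invariance of $\phi$ used above is automatic in the intended arithmetic application of the lemma; a fully general continuous weight with non-trivial $c$-antisymmetric part would require an additional ellipsoid estimate bounding the skew part of the restricted Hermitian form, and this is where I expect the main difficulty to lie if one wished to remove the $c$-invariance assumption.
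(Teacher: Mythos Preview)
Your proof is correct and follows essentially the same route as the paper: reduce to $L^2$-balls via the Bernstein--Markov property, use $c$-invariance of $\mu$ (and of $\phi$) to obtain a real inner product on $H^0(kL)_\R$, and compute the resulting Euclidean ball volume ratio $N_k!/\Gamma(N_k/2+1)^2$ via Stirling. Your caveat about the $c$-invariance of $\phi$ is apt---the paper's proof also tacitly relies on it when asserting that ``the $L^2(\mu,k\phi)$-scalar product is defined over $\R$,'' and as you note this is automatic in the intended arithmetic application.
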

\begin{proof} Let $\mu$ be a smooth positive volume form on $X(\C)$, so that $(\mu,\phi)$ has the Bernstein-Markov property. The scaling argument used in the proof of Lemma~\ref{lem:elde} immediately yields
$$\log\frac{\vol_k^\R\left(\cB^2_\R(k\phi)\right)^2}{\vol_k\cB^2(k\phi)}=\log\frac{\vol_k^\R\left(\cB^{\infty}_\R(k\phi)\right)^2}{\vol_k\cB^{\infty}(k\phi)}+o(kN_k).$$
But we can further assume that $\mu$ is invariant by complex conjugation, so that the $L^2(\mu,k\phi)$-scalar product is defined over $\R$, and it is then easy to see that the left-hand side is equal to its value in the Euclidian space situation, that is
$$\frac{\vol_k^\R\left(\cB^2_\R(k\phi)\right)^2}{\vol_k\cB^2(k\phi)}=\frac{N_k!}{\left((N_k/2)!\right)^2}$$
by expressing it in terms of Gram determinants of orthonormal basis of $H^0(L)_\R$.  
Now both $N_k$ and $N_k/2$ are $O(k^n)$, and this implies by Stirling's formula that both $\log N_k!$ and $\log(N_k/2)!$ are $O(k^n\log k)=o(k^{n+1})$. The result follows.
\end{proof}

\subsection{Proof of Theorem D}\label{sec:thmD}
If $x\in X(\overline{\Q})$ is an algebraic point, let $\mu_x$ denote the averaging measure on $X(\C)$ along the Galois orbit $Gx$. By (\ref{equ:height}) it is immediate to see that
\begin{equation}\label{equ:affine} h^\A_{\phi+v}(x)=h^\A_\phi(x)+\langle\mu_x,v\rangle
\end{equation}
for any continuous function $v$ on $X(\C)$. 

Now let $(x_j)$ be a generic sequence such that $\lim_{j\to\infty}h_\phi(x_j)=\ena(\phi)\in\R$. If $v$ is a continuous function on $X(\C)$, we are to show that 
$$\lim_{j\to\infty}\langle\mu_{x_j},v\rangle=\langle\eq(X(\C),\phi),v\rangle.$$
By Lemma~\ref{lem:arith} the right-hand side is equal to the derivative at $t=0$ of the function $g(t):=\ena(\phi+tu)$. On the other hand by (\ref{equ:affine}) the left-hand side is equal to the derivative at $t=0$ of the affine  function $f_j(t):=h_{\phi+tu}(x_j)$. The asymptotic lower bound (\ref{equ:lowerbound}) implies that
$$\liminf_{j\to\infty}f_j(t)\ge g(t)$$
for all $t$, and the following elementary lemma yields the result.

\begin{lem}
Let $f_j$ be a sequence of concave functions
on $\R$ and let $g$ be a function on $\R$ such that 
\begin{itemize}
\item  $\liminf_{j\to\infty} f_j\ge g$.
\item $\lim_{j\to\infty} f_j(0)=g(0)$.
\end{itemize}
If the $f_j$ and $g$ are differentiable at $0$, then 
$$\lim_{j\to\infty}f_j'(0)=g'(0).$$
\end{lem}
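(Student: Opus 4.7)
The plan is to sandwich $f_j'(0)$ between secant slopes of $f_j$ and then exploit the two hypotheses to pass to the limit in $j$ before shrinking the interval. The key feature to exploit is concavity: for any concave function $f$ differentiable at $0$ and any real numbers $s<0<t$, the monotonicity of secant slopes gives
\[
\frac{f(0)-f(s)}{-s}\;\ge\;f'(0)\;\ge\;\frac{f(t)-f(0)}{t}.
\]

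Applying this to each $f_j$ and taking $\liminf_{j\to\infty}$ in the right inequality yields, for every $t>0$,
\[
\liminf_{j\to\infty} f_j'(0)\;\ge\;\liminf_{j\to\infty}\frac{f_j(t)-f_j(0)}{t}\;\ge\;\frac{g(t)-g(0)}{t},
\]
where the last step uses $\liminf_j f_j(t)\ge g(t)$ together with $\lim_j f_j(0)=g(0)$. Letting $t\to 0^+$ and invoking differentiability of $g$ at $0$, we obtain $\liminf_j f_j'(0)\ge g'(0)$. Symmetrically, applying $\limsup_{j\to\infty}$ in the left inequality and using that $\limsup_j(-f_j(s))\le -g(s)$ gives
\[
\limsup_{j\to\infty} f_j'(0)\;\le\;\frac{g(0)-g(s)}{-s}
\]
for every $s<0$; sending $s\to 0^-$ yields $\limsup_j f_j'(0)\le g'(0)$.

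Combining the two bounds gives $\lim_j f_j'(0)=g'(0)$. There is no real obstacle here: the whole argument is a standard $\liminf/\limsup$ sandwich, and the only subtlety is remembering to interchange the order of limits correctly (first $j\to\infty$, then $t\to 0$), which is legitimate because concavity provides a one-sided, $j$-uniform comparison between the derivative and the secant slope at any fixed $t$.
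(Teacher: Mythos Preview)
Your proof is correct and is essentially the same argument as the paper's: the paper writes the concavity inequality in the tangent-line form $f_j(0)+f_j'(0)t\ge f_j(t)$, which for $t>0$ and $t<0$ is exactly your two secant-slope inequalities, and then passes to the $\liminf$/$\limsup$ in $j$ before letting $t\to 0$, just as you do. The only difference is notational packaging.
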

\begin{proof}
Since $f_j$ is concave, we have 
$$f_j(0)+f_j'(0)t\ge f_j(t)$$ 
and it follows that
$$\liminf_{j\to\infty} t f_j'(0)\ge g(t)-g(0).$$
The result now follows by first letting $t>0$ and then $t<0$ tend to $0$. 
\end{proof}
In other words this lemma states that if $g_j(t)=f_j(t)-g(t)$ is asymptotically minimized at $t=0$ when $j\to\infty$ in the sense that
$$g_j(t)\ge g_j(0)+o(1)$$
then the derivative at $0$ is asymptotically $0$ i.e. $g_j'(0)=o(1)$. This lemma is inspired by the variational principle in the original proof by Szpiro-Ullmo-Zhang~\cite{SUZ}. The case of concave functions $f_k$ pertains to the situation considered in \cite{BBWN}.

\end{document}